\newtheorem{remark}[theorem]{ Remark}
\newtheorem{exam}[theorem]{\bf Example}
\newcommand{\ba}{\begin{array}}
\newcommand{\ea}{\end{array}}
\newcommand{\be}{\begin{equation}}
\newcommand{\ee}{\end{equation}}
\newcommand{\beano}{\begin{eqnarray*}}
\newcommand{\eeano}{\end{eqnarray*}}
\newcommand{\von}{\vskip 1ex}
\newcommand{\vone}{\vskip 2ex}
\newcommand{\vtwo}{\vskip 4ex}
\newcommand{\dm}[1]{ {\displaystyle{#1} } }
\def \R{{\mathbb R}}
\def \C{{\mathbb C}}
\def \cond{{\mathbb{cond}}}
\def \rank{\mathrm{rank}}
\def \d{\mathcal{D}}
\def \sep{\mathrm{sep}}
\def\bmatrix#1{\left[ \begin{matrix} #1 \end{matrix} \right]}
\newcommand{\inp}[2]{\langle {#1} ,\,{#2} \rangle}
\def \dist{\mathrm{dist}}
\def \span{\mathrm{span}}
\def \cond{\mathrm{cond}}
\def \rar{\rightarrow}
\def \lam{\lambda}
\def \sig{\sigma}
\def \Lam{\Lambda}
\def \ep{\epsilon}
\def \diag{\mathrm{diag}}
\def \d{\mathrm{d}}
\def \tr{\mathrm{Tr}}
\def \cnn{\C^{n\times n}}
\def \eig{\mathrm{eig}}
\def \vec{\mathrm{vec}}
\title{ A brief overview of spectral perturbation Theory }
\author{Rafikul Alam \thanks{Department of Mathematics, Indian Institute of Technology Guwahati, Guwahati - 781039, India ({\tt rafik@iitg.ac.in, rafikul68@gmail.com }) Fax: +91-361-2690762/2582649.} }
\begin{document}

\maketitle

\begin{abstract} The aim of this article is to present a brief overview of spectral perturbation theory for matrices,  bounded linear operators and holomorphic operator-valued functions. We focus on bounds for perturbed eigenvalues, eigenvectors and invariant subspaces and provide simplified proofs of some well known results. We present a comprehensive perturbation analysis of invariant subspaces of  matrices.  For bounded linear operators we discuss, among other things, the effect of analytic perturbation on the discrete eigenvalues and spectral projections. We also briefly discuss analytic spectral perturbation theory for holomorphic operator-valued functions.  \end{abstract}

\section{Introduction} Eigenvalue problems arise in many applications in science and engineering. Consider the system of ordinary differential equations \be \label{ode} \frac{dx}{dt} = Ax + f \ee where $A \in \C^{n\times n}$ is a matrix and $x : \R \longrightarrow \C^n$ is differentiable and $ f : \R \longrightarrow \C^n$ is a given function.   Seeking a solution of the form $ x(t) := e^{\lam t} v,$ where $ v \in \C^n$ is nonzero, of the homogeneous equation yields the eigenvalue problem $$ Av = \lam v.$$
If all the eigenvalues of $A$ have negative real part then the system (\ref{ode}) is asymptotically stable. Often in practice the matrix $A$ is not known exactly but only an approximation $A+E$ of $A$ is known due to various reasons such as modeling and approximation errors. Even when $A$ is known exactly, due to rounding errors,  computed eigenvalues of $A$ will at best be exact eigenvalues of $A+E$  for some matrix $E$ such that $\|E\|$ is small. For example, the {\sc matlab} command   {\tt  [U, D] = eig(A)} provides a diagonal matrix $D \in \C^{n\times n}$ whose diagonal entries are computed eigenvalues and a matrix $U \in \C^{n\times n}$ whose columns are computed eigenvectors satisfying  $(A+ E)U = UD$  for some matrix  $E$ such that $\|E\| $ is bounded by a constant multiple of the unit roundoff. Since eigenvalues and eigenvectors of $A$ provide a solution of the system (\ref{ode}), it is important to understand the effect of the perturbation $A+E$ on the eigenvalues and eigenvectors of $A.$ For instance, if the matrix $A$ is stable (i.e, eigenvalues of $A$  lie in the open left half complex plane), will the matrix $A+E$ be stable? If $A$ has distinct eigenvalues, will $A+E$ have distinct eigenvalues?   

\vone 
Next, consider the second order system of ordinary differential equations \be\label{qep}  \ddot{y}(t) + A(\tau) \dot{y}(t) +B(\tau) y(t) =0, \ee where $A(\tau)$ and $B(\tau)$ are  matrices in $\C^{n\times n}$ which depend on a parameter $ \tau \in \C$ holomorphically  and $ y : \R \longrightarrow \C^n$ is a smooth function. Here $\dot{y}$ denotes the derivative of $y$ and $\ddot{y}$ denotes the second derivative of $y.$ Now, for a fixed $\tau \in \C,$ seeking a solution of (\ref{qep}) of the form $ y(t) := e^{\lam t} v,$ where $ v \in \C^n$ is nonzero, yields the quadratic eigenvalue problem (QEP)
\be  \label{quad} (\lam^2 I + \lam A(\tau) + B(\tau))v = 0. \ee  Setting $ v_1 := \lam v$ and $ v_2 := v$, the QEP can be rewritten as a one parameter matrix eigenvalue problem  \be\label{gep}   \bmatrix{ -A(\tau) & -B(\tau) \\  I & 0} \bmatrix{  v_1 \\ v_2} = \lam \bmatrix{  v_1 \\ v_2}   \Longrightarrow  \mathcal{A}(\tau)\mathbf{v} = \lam \mathbf{v}.\ee Since the  eigenvalues and eigenvectors of $\mathcal{A}(\tau)$ are required for solutions of (\ref{qep}), it is important to analyze the eigenvalues and eigenvectors of $\mathcal{A}(\tau)$ when $ \tau $ varies in $\C.$

\vone 
Finally, given  continuous  functions $ K : [a, b]\times [a, b] \longrightarrow \C$ and $ g :[a, b] \longrightarrow \C$, consider the Fredholm integral equation of the second kind \be \label{int} \int^b_a K(s, t) f(t) dt  - \lam f(s) = g(s) \text{ for } s \in [a, b],\ee where $\lam \in \C$ and $f: [a, b] \longrightarrow \C$ is continuous and a solution of (\ref{int}). A solution of the homogeneous equation yields the eigenvalue problem $$ \int^b_a K(s, t) f(t) dt  =  \lam f(s) \Longrightarrow  Tf = \lam f.$$
The infinite dimensional eigenvalue problem $Tf = \lam f$ is approximated by a  finite dimensional problem $T_n f_n = \lam_n f_n,$ where $T_n$ is a sequence of finite rank linear operators representing, for instance, projection, Galerkin or Nystr\"{o}m  method~\cite{limbook2}. Note that $ T $ is a bounded linear (compact) operator on $C[a, b]$. However, $T_n$ often does not converge to $T$ in the norm. It is therefore imperative to investigate the mode of convergence $ T_n \longrightarrow T$ so that the  eigenvalues, eigenvectors  and spectral subspaces of $T$ can be approximate by those of $T_n$ as $ n \rightarrow \infty.$ We will not discuss this issue here and refer to \cite{alamjma, alammc,mario, chatelin, osborn} and the references therein for more on this topic.

\vone 
A central problem in spectral perturbation theory is to analyze variations of eigenvalues, eigenvectors and invariant subspaces of a square matrix or a bounded linear operator $A$ when $A$ undergoes a perturbation of the form $A+E.$ Let $\sig(A)$ be the spectrum of $A$. Then an immediate question that arises is this: How  are $\sig(A+E)$ and $\sig(A)$ related?  Can $ \sig(A)$ expand or shrink suddenly  when $A$ is perturbed to  $A+E$  for a small $\|E\|$?  In other words, is the set-valued map $ A \longmapsto \sig(A)$ continuous?  For instance, if $A$ is an $n\times n$  matrix then a result due to Elsner~\cite{bhatiabook, stewbook} states that 
\be \label{elsbd} d_H(\sig(A), \sig(A+E)) \leq (\|A\|_2 + \|A+E\|_2)^{1-1/n} \| E\|_2^{1/n},\ee where  $ d_H(\sig(A), \sig(A+E))$  is the Housedroff distance. Although, the bound in (\ref{elsbd}) shows that the map  $ A \longmapsto \sig(A)$ is continuous, an unsavory fact about the bound is the  appearance of the $n$-th root $\|E\|_2^{1/n}.$  To see the implication of this bound, suppose that $\|E\|_2 = 10^{-16}$ and $ n= 16.$ Then $ d_H(\sig(A), \sig(A+E)) = \mathcal{O}(10^{-1}).$ This means that an error of magnitude $10^{-16}$ in the matrix $A$ is magnified $10^{15}$ times in the bound for the spectrum of $A+E.$ 
\vone 

On the other hand, if $A$ is a bounded linear operator on an infinite dimensional Banach space then the continuity of the map $ A \longmapsto \sig(A)$ is not guaranteed. However, $\sig(A)$ cannot expand suddenly when $A$ is perturbed to $A+E$ for a small $\|E\|.$ In other words, the map  $ A \longmapsto \sig(A)$ is upper semicontinuous~(see, \cite{limbook2, kato}), that is, for any open set $U$ containing $\sig(A)$ there exists a $\delta >0$ such that $$ \|E\| < \delta \Longrightarrow  \sig(A+E) \subset U.$$ This leaves the question about sudden shrinkage of $\sig(A)$ when $A$ is perturbed to $A+E$. In other words, is the map  $ A \longmapsto \sig(A)$  lower semicontinuous, that is, if $U$ is an open set  such that $ \sig(A)\cap U \neq \emptyset$ then does there exist $\delta >0$ such that $$ \|E\| < \delta \Longrightarrow \sig(A+E)\cap U  \neq \emptyset?$$ It is well known~\cite{kato, limbook2} that the map  $ A \longmapsto \sig(A)$  is not lower semicontinuous. We illustrate this fact by an example; see~\cite{kato} for details.

Consider the Hilbert space $\ell^2(\mathbb{Z}) :=\left\{ x: \mathbb{Z} \longrightarrow \C \;  | \; \sum_{j \in \mathbb{Z}} |x(j)|^2 < \infty\right\}$ and  the orthonormal basis  $\{ e_n : n \in \mathbb{Z}\},$ where $e_n(j) = \delta_{nj}$ and $\delta_{nj}$ is the Kronecker delta.  Now consider the left shift operator $ S : \ell^2(\mathbb{Z}) \longrightarrow \ell^2(\mathbb{Z})$ given  by 
$$S e_0 =0 \; \text{ and } \; S e_n = e_{n-1} \; \text{ for } \; n \neq 0.$$   Then  $  \sig(S) =  \{ \lam \in \C : |\lam| \leq 1\} = :\mathbb{D}.$ In fact,  if $|\lam| <1 $ then for $ u := \sum^\infty_{n=0} \lam^n e_n,$ we have  $ S u = \lam u$ showing that $\lam$  is an eigenvalue of  $S$   when   $|\lam| < 1.$

Now define $ E : \ell^2(\mathbb{Z}) \longrightarrow \ell^2(\mathbb{Z})$ by $ E e_0 = e_{-1}$ and $ Ee_n = 0$ for $n \neq 0.$ Consider the perturbed operator $A(\tau) := S + \tau E$ for $ \tau \in \C.$  Then $ A(\tau) e_0 = \tau e_{-1}$ and $ A(\tau) e_n = e_{n-1}$ for $ n\neq 0$ is a left shift operator and the spectral radius $r_{\sig}((A(\tau))) =1.$   Hence  $\sig(A(\tau)) \subset \mathbb{D}.$

Note that if  $ \tau \neq 0$ then $A(\tau)$ is invertible and $A(\tau)^{-1}$ is a right shift operator with spectral radius $r_{\sig}((A(\tau))^{-1}) =1.$  Hence  $\sig(A(\tau)^{-1}) \subset \mathbb{D}$   for $\tau \neq 0.$  By the spectral mapping theorem   
$ \sig(A(\tau)) \subset  \{ \lam \in \C : |\lam| =1\} $ for  $\tau \neq 0.$ This shows that the set-valued map $ \tau \longmapsto \sig(A(\tau))$ is NOT  lower semicontinuous. Notice  that  the spectrum $\sig(A(\tau))$ at $\tau:=0$ shrinks suddenly when the  perturbation is switched on. This example also illustrates that the spectral perturbation theory for bounded linear operators cannot be expected to be a routine generalization of  the spectral perturbation theory for  matrices. Hence we treat the two cases separately.

%
\vone

Spectral perturbation theory for matrices and linear operators is a classical subject and has been studied extensively over the years; see~\cite{tofna, alamela,alambora, alambora2, alambora3, alambora4, bhatiabook, baum, chu, stewbook, limbook2, kato, hornbook, sun1, sun4,W1, W2, W3, W4} and references therein. The main objective of this article is to provide a brief overview of spectral perturbation  theory for matrices and bounded linear operators. We mainly focus on perturbation bounds for (discrete) eigenvalues, eigenvectors and invariant subspaces of matrices and  bounded linear operators. In particular, we present a comprehensive perturbation analysis of invariant subspaces of matrices and provide simplified proofs of some well known results.  The bound in (\ref{elsbd}) shows that for any $\lam \in \sig(A)$ there exists $ \lam_E \in \sig(A+E)$ such that $$|\lam - \lam_E| \leq  \kappa \|E\|_2^{1/n},$$ where $\kappa$ is a constant. This bound illustrates  the drawback of  {\em ``one bound fits all eigenvalues"} strategy. It is a fact that each eigenvalue of $A$ behaves differently when $A$ is perturbed to $A+E.$ Hence it is imperative to focus on individual eigenvalues or a small group of eigenvalues and derive perturbation bounds for the eigenvalues, their corresponding eigenvectors and the associated spectral subspace  when $A$ is perturbed to $A+E.$  With this motivation, we review three kinds of bounds, namely, asymptotic bounds, local bounds and global bounds for perturbed eigenvalues. 

\vone 
An asymptotic bound for a (discrete) eigenvalue $\lam$ is a bound of the form 
$$ |\lam - \lam_E|^\nu  \leq \kappa(\lam, A) \|E\| + \mathcal{O}(\|E\|^2)$$ for sufficiently small $\|E\|,$ where $\nu$ is the ascent of $\lam$ and $\kappa(\lam, A)$ is a constant. In contrast, a local bound  puts a restriction on $\|E\|$ to derive a non-asymptotic bound
$$ \|E\|< \delta \Longrightarrow |\lam - \lam_E|^\nu  \leq \kappa(\lam, \delta, A) \|E\|.$$ On the other hand, a global bound puts no restriction on $\|E\|$ to derive a non-asymptotic bound of the form  
$$ |\lam - \lam_E|^\nu  \leq \kappa(\lam, A) \|E\| \;\;\text{ for all } E. $$  We mention that when $A$ has additional properties such as when $A$ is selfadjoint or when $A$ is a structured matrix (e.g., Hamiltonian, skew-Hamiltonian, Toeplitz, Hankel, unitary and symplectic - to name only a few) then structured perturbation theory provides specialized results under structure-preserving perturbation. However, we will not discuss structured perturbation theory in this article and refer to \cite{alamvolker, mehl1, mehl2, mehl3, moro1, moro2, rump, bhatiabook, bibhas1, bibhas3, bibhas2} and  the references therein for more on this topic. \vone

The rest of this article is organized as follows. In section~2,  we present preliminary results.  In section~3, we analyze perturbation of invariant subspaces of matrices. We derive perturbation bounds for eigenvalues of matrices in section~4. In section~5, we consider  parameter dependent eigenvalue problem and discuss analyticity  of eigenvalues and spectral projections. Finally, in section~6, we consider spectral perturbation theory for holomorphic operator-valued functions.


\vone 

\section{Preliminaries} 
Let $\C^{m\times n}$ denote the set of all $m$-by-$n$ matrices with entries in $\C.$  Let $A
\in \C^{n\times n}$ and $\lam$ be an eigenvalue of $A,$ that is, $\rank(A-\lam I) < n.$ Then
there exist  nonzero vectors $x \in \C^n$ and $ y \in \C^n$ such
that $$Ax = \lam x  \mbox{ and } y^*A = \lam y^*,$$ where $y^*$
denotes the conjugate transpose of $y.$  The vectors $y$ and $x$ are
called left and right eigenvectors of $A$ corresponding to $\lam,$
respectively. We refer to $(\lam, y, x)$ as an eigentriple of $A.$
An eigenvalue $\lam$ is simple if it is a simple root of the characteristic polynomial $p(z) :=\det(zI - A).$  We refer to $(\lam, y, x)$ as a simple  eigentriple of $A$   when $\lam$ is a simple  eigenvalue of $A.$ We denote the spectrum of $A$ by $\eig(A)$ and the resolvent set of $A$ by $\rho(A)$,  that is, $$ \eig(A) := \{ \lam \in \C : \rank(A- \lam I) < n\} \;\; \text{ and } \;\; \rho(A) = \C \setminus \eig(A).$$

 We denote the Kronecker product of two matrices $A$ and $B$ by $ A \otimes B.$  The $2$-norm of a vector $ x \in \C^n$ is given by $\|x\|_2 := \sqrt{|x_1|^2+ \cdots + |x_n|^2}$ and the spectral norm (i.e., the $2$-norm) of  matrix $ A \in \C^{m\times n}$ is given by $$\|A\|_2 := \max\{ \|Ax\|_2 : x \in \C^n, \;  \|x\|_2=1\}.$$ The Frobenius norm of $A$ is given by $ \|A\|_F := \sqrt{\sum^m_{i=1}\sum^n_{j=1} |a_{ij}|^2}.$   We denote the $n\times n$ identity matrix by $I_n$ and also by $I$ when the size is clear from the context. The $j$-th column of $I_n$ is denoted by $e_j.$ Thus $e_1, \ldots, e_n$ are canonical vectors which form a standard basis of $\C^n.$  For $ X, Y \in \C^{m\times n},$ $ \inp{X}{Y} := \tr(Y^*X)$ is the Frobenius inner product on $\C^{m\times n}.$ \von

Let $X$  be a complex Banach space and 	$BL(X)$ denote the Banach space of all bounded linear operators on $X.$  Let  $ A \in BL(X).$  Then $A$ is said to be  invertible if $A^{-1}$ exists and $A^{-1} \in BL(X).$   The norm of $A$ is given by   $$ \|A\| := \sup\{ \|Ax\| : x \in X, \|x\|=1\} = \sup\{ \|Ax\| : x \in X, \|x\|\leq 1\}.$$ The resolvent set $\rho(A)$ and the spectrum $\sig(A)$ of $A$ are given by 
$$ \rho(A) := \{ z \in \C : (A- zI)^{-1} \in BL(X)\} \text{ and } \sig(A) := \C\setminus \rho(A).$$  Obviously we have $ \sig(A) = \eig(A)$ when $A$ is a matrix.  Let $ \mu \in \C.$ If $X$ is finite dimensional then 	
$$ A- \mu I \text{ is injective } \iff A- \mu I \text{ is surjective } \iff A - \mu I \text{ is invertible. } $$
This is no longer true when $X$ is infinite dimensional. For instance, consider  $$ A : C[0, 1] \longrightarrow C[0, 1], \; f(s) \longmapsto \int^s_0 f(t)dt.$$ Obviously, $A$ is injective. Since $(Af)(0) =0$ for all $f \in C[0, 1]$,  $A$ is NOT surjective.    

\vone This is an indication that  $\sig(A)$ is likely a more complicated set when $A$ is an infinite dimensional bounded linear operator than $\eig(A)$ when $A$ is a matrix. For instance, an infinite dimensional bounded linear operator  $A$ may not have an eigenvalue.    We denote the eigenspectrum of $A$ by $\eig(A)$ and is given by $$ \eig(A) := \{ \mu \in \C :  A- \mu I \text{ is not injective }\}.$$ If $\eig(A)$ is nonempty and $ \lam \in \eig(A)$ then there exists a nonzero vector $ v \in X$ such that $Av = \lam v.$ In such a case, $\lam $ is called an eigenvalue of $A$ and $ v$ is called  a corresponding eigenvector. 

\vone 

It is well known~\cite{limbook1, limbook2, kato}  that the spectrum $\sig(A)$ is nonempty and compact, and the resolvent set $\rho(A)$ is open.  The {\em spectral radius} of $A$ is given by   $$r_\sig(A) := \max\{ | \lam| : \lam \in \sig(A)\}.$$ 	
It is also well known~\cite{limbook1} that $r_{\sig}(A) = \inf_n \|A^n\|^{1/n} = \lim_{n\rar \infty} \|A^n\|^{1/n}.$   Let $N(A)$ denote the kernel (null space) of $A$ and $ R(A)$ denote the range space of $A$, that is, 
 $$ N(A) := \{ x \in X : Ax = 0\} \text{ and } R(A) := \{ Ax : x \in X\}.$$  For the special case when $A$ is a matrix, we  write $\span(A)$ to denote the column space of $A$ (span of the columns of $A$) which is the same as the range space $R(A).$  
\vone

An operator $A \in BL(X)$ is said to be {\em Fredholm} if $ R(A)$ is closed and  the quotient space $ X/{R(A)}$ and $N(A)$ are finite dimensional. The index of a Fredholm operator $A$ is defined by $\mathrm{ind}(A) := \mathrm{dim}(N(A)) - \mathrm{dim}(Y/{R(A)})$,  see~\cite{ggk, kato}.

\vone 
  For $z \in \rho(A)$, let $R(z) := (A- zI)^{-1}$ denote the resolvent operator. Then the first resolvent identity $R(z) - R(w) = (z- w) R(z) R(w)$ holds for all $ z , w \in \rho(A).$ 
  
  \vone 
  \begin{lemma}\cite{limbook2} \label{pqrank} Let $ P$ and $ Q$ be  projections in $BL(X).$ If $ r_{\sig}(P-Q) <1 $ then $\rank(P) = \rank(Q). $ Further, the maps  $$ J_P : R(Q) \longrightarrow R(P), x \longmapsto Px, \mbox{ and } J_Q: R(P) \longrightarrow  R(Q), y \longmapsto Qy,$$ are linear isomorphisms.  \end{lemma}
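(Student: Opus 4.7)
The plan is to find an explicit operator that simultaneously witnesses the invertibility of both compositions $J_P \circ J_Q : R(P) \to R(P)$ and $J_Q \circ J_P : R(Q) \to R(Q)$. Once both compositions are shown to be bijections, it follows that $J_P$ and $J_Q$ are linear isomorphisms, and consequently $R(P)$ and $R(Q)$ have the same dimension, i.e.\ $\rank(P) = \rank(Q)$.

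The operator I have in mind is $S := (P-Q)^2$. A direct expansion using $P^2 = P$ and $Q^2 = Q$ gives $S = P + Q - PQ - QP$, and a one-line computation shows that $PS = SP = P - PQP$ and $QS = SQ = Q - QPQ$, so $S$ commutes with both $P$ and $Q$. Since $r_\sigma(P-Q) < 1$ implies $r_\sigma(S) \le r_\sigma(P-Q)^2 < 1$, the operator $I - S$ lies in $BL(X)$ and is invertible, with $(I-S)^{-1}$ commuting with $P$ and $Q$ as well.

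Next I would compute the restriction of $I - S$ to the two ranges. For $y \in R(P)$, using $Py = y$,
\begin{equation*}
(I-S)y \;=\; y - Py - Qy + PQy + QPy \;=\; PQy \;=\; J_P(J_Q(y)),
\end{equation*}
and symmetrically, for $x \in R(Q)$,
\begin{equation*}
(I-S)x \;=\; QPx \;=\; J_Q(J_P(x)).
\end{equation*}
Because $I-S$ commutes with $P$, it preserves the decomposition $X = R(P) \oplus N(P)$; being invertible on $X$, its restriction to the closed subspace $R(P)$ is invertible on $R(P)$. The same holds for $R(Q)$. Hence $J_P \circ J_Q$ is a bijection on $R(P)$ and $J_Q \circ J_P$ is a bijection on $R(Q)$. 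From this $J_P$ is both surjective (since $J_P J_Q$ is onto $R(P)$) and injective (since $J_Q J_P$ is one-to-one on $R(Q)$), and likewise for $J_Q$. Explicitly, $J_P^{-1} = (I-S)^{-1} \circ J_Q\big|_{R(P)}$.

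The only non-routine step is identifying the algebraic object $I - (P-Q)^2$ as the right candidate: the work lies in recognising that the two awkward products $PQ$ and $QP$, which appear when one naïvely composes $J_P J_Q$ and $J_Q J_P$, are precisely the restrictions of $I - (P-Q)^2$ to $R(P)$ and $R(Q)$ respectively. Once that identity is in hand, the spectral radius hypothesis does all the remaining work through the Neumann series for $(I-S)^{-1}$, and no further appeal to the structure of $X$ (finite or infinite dimensional) is necessary.
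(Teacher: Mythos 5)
Your proof is correct, and it is essentially the standard argument for this lemma (the paper itself cites it from Limaye's book without reproducing a proof). The key identities are verified easily: $(P-Q)^2 y = y - PQy$ for $y \in R(P)$ (using $Py = y$), giving $(I-(P-Q)^2)\big|_{R(P)} = J_P J_Q$, and symmetrically on $R(Q)$; commutativity of $S := (P-Q)^2$ with $P$ and $Q$ is a one-line check; and the spectral mapping theorem gives $r_\sigma(S) = r_\sigma(P-Q)^2 < 1$, so $I - S$ is invertible via a Neumann series and its inverse commutes with $P$ and $Q$, hence restricts to invertible operators on $R(P)$ and $R(Q)$. The deduction that both $J_P$ and $J_Q$ are bijections from the invertibility of both compositions $J_P J_Q$ and $J_Q J_P$ is sound, and the rank equality follows. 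The explicit formula $J_P^{-1} = (I-S)^{-1} J_Q\big|_{R(P)}$ is also correct and gives a clean Neumann-series expansion of the inverse. No gaps.
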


  %

  A consequence of Lemma~\ref{pqrank} is that a projection-valued  function $ t \longmapsto P(t)$ continuous on a connected domain has a constant rank. 
  
  \begin{proposition}\label{contp} Let $D \subset \C$ be connected. Let $ P : D \longrightarrow BL(X)$ be continuous and $P(t)^2 = P(t)$ for all $ t \in D.$ Let $ s \in D.$  Then $\rank(P(t)) = \rank(P(s))$ for $ t \in D.$
  \end{proposition}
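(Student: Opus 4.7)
The plan is to use Lemma~\ref{pqrank} together with the connectedness of $D$ via a standard clopen-set argument. The key observation is that Lemma~\ref{pqrank} converts the analytic hypothesis (closeness of two projections in operator norm) into the discrete conclusion (equality of ranks), which is exactly the kind of statement that interacts well with connectedness.

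First I would fix $s \in D$ and define
\[
S := \{ t \in D : \rank(P(t)) = \rank(P(s))\}.
\]
Clearly $s \in S$, so $S$ is nonempty. The goal is to show $S$ is both open and closed in $D$, so that connectedness forces $S = D$.

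For openness, let $t_0 \in S$. By continuity of $P$ at $t_0$, there is an open neighborhood $U \subset D$ of $t_0$ such that $\|P(t) - P(t_0)\| < 1$ for all $t \in U$. Since $r_{\sig}(P(t) - P(t_0)) \leq \|P(t) - P(t_0)\| < 1$ and both $P(t)$ and $P(t_0)$ are projections, Lemma~\ref{pqrank} yields $\rank(P(t)) = \rank(P(t_0)) = \rank(P(s))$, so $U \subset S$. The same continuity argument shows that the complement $D \setminus S$ is open: if $t_0 \notin S$, pick a neighborhood on which $\|P(t)-P(t_0)\|<1$; on this neighborhood $\rank(P(t)) = \rank(P(t_0)) \neq \rank(P(s))$, so the neighborhood lies in $D \setminus S$. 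Thus $S$ is clopen, nonempty, and a subset of the connected set $D$, giving $S = D$.

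There is no real obstacle here, since Lemma~\ref{pqrank} does the heavy lifting; the only point that needs mild care is ensuring that the spectral radius bound $r_{\sig}(P(t)-P(t_0)) \leq \|P(t)-P(t_0)\|$ is invoked (so that the norm-continuity hypothesis on $P$ is enough to trigger the lemma, which is stated in terms of spectral radius). With that in hand, the argument is purely topological and works verbatim whether $X$ is finite or infinite dimensional.
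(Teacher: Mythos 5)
Your proof is correct and follows essentially the same route as the paper: use continuity together with Lemma~\ref{pqrank} to show local constancy of rank, then observe that $S := \{t \in D : \rank(P(t)) = \rank(P(s))\}$ is nonempty and clopen, so by connectedness $S = D$. The only addition over the paper's write-up is your explicit note that $r_{\sig}(P(t)-P(t_0)) \leq \|P(t)-P(t_0)\|$, which the paper leaves implicit.
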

  \von
  \begin{proof} 	Since $P(t)$ is continuous on $D,$ for every $t_0 \in D$ there is a $\delta_0>0$ such that $ t \in D$ and  $ |t-t_0| < \delta_0 \Longrightarrow \|P(t) - P(t_0)\| < 1.$ Hence by by Lemma~\ref{pqrank},   $ \rank(P(t)) = \rank(P(t_0)).$
  	Let $ S := \{ t \in D :  \rank(P(t)) = \rank(P(s))\}.$ Then $ S$ is both closed and open in $D.$ Since $D$ connected, we have $ S= D.$  
  \end{proof} 
 
 \vone 
 \noindent{\bf Invariant subspace:} A subspace $Y \subset X$ is said to be an invariant subspace of $A$ if $ AY \subset Y.$  Let $ P\in BL(X)$ be a nontrivial projection, that is, $ P^2 = P$ and $ 0 \neq P \neq I.$ Set $ Y := R(P)$ and $ Z := N(P).$ Then $X$ is decomposed as $$\begin{array}{cc} X = Y\oplus Z &  \text{ with } x = y+z \rightsquigarrow \bmatrix{ y\\ z}, \text{ where }  y \in Y \text{ and } z \in Z \end{array}. $$    
  Then $A$ can be written as a $2$-by-$2$ operator matrix  $$ A = \bmatrix{ A_{11} & A_{12} \\ A_{21} & A_{22}} = \bmatrix{ PAP & PA(I-P) \\ (I-P)AP & (I-P)A(I-P)}. $$ Note the abuse of notation in the last equality! For instance, $PAP $ denotes the restriction $PA_{|R(P)}$  on $R(P)$ and so on.  Then  \beano A Y \subset Y &\Longleftrightarrow& A_{21} = 0\Longleftrightarrow(I-P)AP  = 0 \iff AP = PAP \\   A Z \subset Z &\Longleftrightarrow& A_{12} = 0 \Longleftrightarrow PA(I-P)  = 0 \Longleftrightarrow PA = PAP.\eeano Hence $ AY \subset Y$ and $ AZ \subset Z \iff AP = PA \iff A = \bmatrix{ A_{11} & 0 \\ 0 & A_{22}}.$

 \vone 
 Suppose that  $PA= AP.$ Then $ A Y\subset Y$ and $ AZ \subset Z.$  In such a case, the invariant subspaces   $Y$ and $Z$ are called {\em reducing subspaces} of $A.$   
 Set $ A_Y := PAP = {P AP}_{|Y}$ and $ A_Z := (I-P)A(I-P) = {(I-P) A(I-P)}_{|Z}.$ Then  $X = Y\oplus Z$ and  $$ A = \bmatrix{ A_Y & 0 \\ 0 & A_Z} \text{  and  } \sig(A) = \sig(A_Y) \cup\sig(A_Z).$$ If $ \sig(A_Y) \cap \sig(A_Z) = \emptyset$ then $ Y$ (resp., $Z$ ) is called the  {\em spectral subspace} of $A$ corresponding to $\sig(A_Y)$ (resp., $\sig(A_Z).$) The projection  $P$ is called the {\em spectral projection} of $A$ corresponding to $\sig(A_Y).$ 
 
 \vone
 \begin{remark}  A spectral projection $P$ associated with $A$ decomposes   $ X  = Y \oplus Z$ and $A = A_Y  \oplus A_Z $ as well as the spectrum  $\sig(A) = \sig(A_Y) \cup \sig(A_Z)$ with $\sig(A_Y) \cap \sig(A_Z) = \emptyset,$ where $ Y := R(P)$ and $ Z:= N(P).$ 
 	\end{remark} 
 
 \vone \begin{remark}  Let $ P \in BL(X)$ be a projection and $ \sig_0 \subset \sig(A)$ be compact.  Then $P$ is  the  spectral projection  of $A$ corresponding to $\sig_0$ if and only if $ PA = AP, \sig(A_Y) = \sig_0$ and $ \sig(A_Z) = \sig(A)\setminus \sig_0,$  where $ Y := R(P)$ and $ Z:= N(P).$ 
 \end{remark} 
 \vone

 We always assume that $\Gamma$ is a positively oriented  rectifiable simple closed curve in $\C$. Let $\mathrm{Int}(\Gamma)$  denote the open region enclosed by $\Gamma$ and $\mathrm{Ext}(\Gamma)$ denote the open region outside the closed region enclosed by  $\Gamma$ so that $$ \mathrm{Int}(\Gamma) \cap \mathrm{Ext}(\Gamma) = \emptyset \; \text{and } \; \C = \mathrm{Int}(\Gamma) \cup \Gamma \cup  \mathrm{Ext}(\Gamma).$$

 Let  $ \Gamma \subset \rho(A)$.  Suppose that $ \sig_0 := \sig(A) \cap \mathrm{Int}(\Gamma)\neq \emptyset.  $  Define  $$\dm{ P := \frac{-1}{2 \pi i} \int_{\Gamma} R(z) dz}\;\; \text{ and }\;\; S(z_0) := \frac{1}{2 \pi i} \int_{\Gamma} \frac{R(z)}{z-z_0} dz \;\; \text{ for } z_0 \in \mathrm{Int}(\Gamma). $$ Then $AP = PA$ and $P$ is a projection. Indeed,  choose $\hat \Gamma \subset \rho(A)$ such that $\Gamma \subset \mathrm{Int}(\hat\Gamma)$ and $\hat \Gamma $ can be continuously deformed in $\rho(A)$ to $\Gamma$. Then  
 \beano P^2  &=&  \frac{-1}{2 \pi i} \int_{\Gamma} R(z) dz  \times \frac{-1}{2 \pi i} \int_{\hat \Gamma} R(w) dw =  \frac{1}{(2 \pi i)^2} \int_{\Gamma} \left(  \int_{\hat \Gamma} R(z)R(w) dw  \right) dz \\ 
 &=&\frac{1}{(2 \pi i)^2} \int_{\Gamma} \left(  \int_{\hat \Gamma} \frac{R(z)-R(w)}{z-w} dw  \right) dz = \frac{1}{(2 \pi i)^2} \int_{\Gamma} R(z)\left(  \int_{\hat \Gamma} \frac{dw}{z-w}   \right) dz \\
 &=& \frac{-1}{2 \pi i} \int_{\Gamma} R(z) dz  = P. \eeano The last equality holds 
 since $\dm{\int_{\Gamma} \left( \int_{\hat \Gamma} \frac{ R(w) dw}{z-w}\right)dz = \int_{\hat\Gamma}\left( R(w) \int_{\Gamma} \frac{dz}{z-w}\right) dw = 0.}$ \\
 
  Since $A$ commutes with $R(z)$, it follows that $A$ commutes with $P.$  The operator $S$ is called the {\em reduced resolvent} of $A- z_0I.$   In fact, $P$ is the spectral projection of $A$ corresponding to $\sig_0.$ This follows from the following result. 
 
 \vone 
 \begin{theorem}[spectral decomposition, \cite{limbook2, kato}] \label{spd} Let $A, P$ and $ \sig_0$ be as above.  Set $ Y := R(P)$ and $ Z := N(P).$ Then $ X = Y\oplus Z$ and $ A = \bmatrix{ A_Y & 0 \\ 0 & A_Z}.$  Further $ \sig(A) = \sig(A_Y)\cup \sig(A_Z)$ and $ \sig(A_Y) = \sig(A) \cap \mathrm{Int}(\Gamma) $ and $ \sig(A_Z) = \sig(A) \cap \mathrm{Ext}(\Gamma).$  Thus $P$ is the spectral projection of $A$ corresponding to $\sig_0 = \sig(A_Y).$ 
 
 \end{theorem}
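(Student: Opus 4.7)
The plan is three-step. First, establish $X = Y \oplus Z$ with $A$ block diagonal; second, localize $\sigma(A_Y)$ and $\sigma(A_Z)$ on opposite sides of $\Gamma$ by constructing explicit inverses via the reduced resolvent; third, combine these with the identity $\sigma(A) = \sigma(A_Y) \cup \sigma(A_Z)$ to identify the two pieces with $\sigma_0$ and its complement in $\sigma(A)$. The first step is essentially bookkeeping: since $P \in BL(X)$ is a projection, every $x \in X$ decomposes uniquely as $x = Px + (I-P)x$, giving the topological direct sum $X = Y \oplus Z$ with $Y, Z$ closed; because $AP = PA$ was proved above, both subspaces are invariant under $A$, so $A$ takes the block-diagonal form $\bmatrix{A_Y & 0 \\ 0 & A_Z}$ as asserted.

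For the second step, fix $z_0 \in \mathrm{Int}(\Gamma)$. Writing $A - z_0 I = (A - zI) + (z - z_0)I$ and multiplying on the right by $R(z)$ gives $(A - z_0 I) R(z) = I + (z - z_0) R(z)$; dividing by $z - z_0$ and integrating along $\Gamma$ yields
\[
(A - z_0 I)\, S(z_0) \;=\; \frac{1}{2\pi i}\int_\Gamma \frac{dz}{z - z_0}\, I \;+\; \frac{1}{2\pi i}\int_\Gamma R(z)\, dz \;=\; I - P,
\]
and by symmetry $S(z_0)(A - z_0 I) = I - P$. Because the first resolvent identity forces $R(z)$ and $R(w)$ to commute for all $z, w \in \rho(A)$, the nested-contour manipulation already used in the excerpt to prove $P^2 = P$ shows that $S(z_0)$ commutes with $P$ and therefore preserves $Z = N(P)$. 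Restricting the above to $Z$, where $I - P$ acts as $I_Z$, exhibits $S(z_0)|_Z$ as a two-sided inverse of $A_Z - z_0 I_Z$, so $\sigma(A_Z) \cap \mathrm{Int}(\Gamma) = \emptyset$. For $z_0 \in \mathrm{Ext}(\Gamma)$ the same integral still makes sense; now $\frac{1}{2\pi i}\int_\Gamma (z - z_0)^{-1} dz = 0$, so the identical computation produces $(A - z_0 I) S(z_0) = S(z_0)(A - z_0 I) = -P$, and $-S(z_0)|_Y$ inverts $A_Y - z_0 I_Y$, giving $\sigma(A_Y) \cap \mathrm{Ext}(\Gamma) = \emptyset$.

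Finally, since $A = A_Y \oplus A_Z$ on $X = Y \oplus Z$, invertibility of $A - \mu I$ is equivalent to simultaneous invertibility of both diagonal blocks, which gives $\sigma(A) = \sigma(A_Y) \cup \sigma(A_Z)$. Combining this with the two localizations just obtained and the hypothesis $\Gamma \subset \rho(A)$ (which keeps both block spectra off $\Gamma$ as well) yields the disjoint decomposition $\sigma(A_Y) = \sigma(A) \cap \mathrm{Int}(\Gamma) = \sigma_0$ and $\sigma(A_Z) = \sigma(A) \cap \mathrm{Ext}(\Gamma)$, which in turn identifies $P$ as the spectral projection of $A$ corresponding to $\sigma_0$. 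I expect the main technical hurdle to be the verification that $S(z_0)$ preserves $Z$: it reduces to the commutation $P S(z_0) = S(z_0) P$, whose proof via exchange of nested contour integrals and the resolvent identity mirrors the calculation of $P^2$ already carried out, so no new analytic input is required, but without this step the restriction argument collapses and the spectral localization breaks down.
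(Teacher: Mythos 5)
The paper states Theorem~\ref{spd} with citations to \cite{limbook2, kato} but gives no proof of its own, so there is nothing in the source to compare against line by line; what you have written is the standard Riesz functional-calculus argument used in those references, and it is correct. The key step --- integrating the identity $(A-z_0I)R(z) = I + (z-z_0)R(z)$ against $(z-z_0)^{-1}\,dz/2\pi i$ over $\Gamma$ to get $(A-z_0I)S(z_0) = I-P$ when $z_0 \in \mathrm{Int}(\Gamma)$ and $=-P$ when $z_0\in\mathrm{Ext}(\Gamma)$ --- does exhibit two-sided inverses of $A_Z - z_0 I_Z$ and $A_Y - z_0 I_Y$ once one knows $S(z_0)$ commutes with $P$ and hence respects the splitting. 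One small streamlining: you invoke a nested-contour Fubini argument for $PS(z_0)=S(z_0)P$, but this commutation follows immediately from $PA=AP$ (already established in the paper), since then $P$ commutes with $A-zI$ and hence with $R(z)$ for every $z\in\rho(A)$, and $P$ can be pulled through the single integral defining $S(z_0)$ without any contour deformation. Everything else --- the closedness of $Y$ and $Z$, the block-diagonal form from $AP=PA$, the identity $\sigma(A)=\sigma(A_Y)\cup\sigma(A_Z)$ for a direct sum, and the final identification $\sigma(A_Y)=\sigma_0$ using $\Gamma\subset\rho(A)$ to keep both pieces off $\Gamma$ --- is sound.
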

 
 \vone Now suppose that $\sig_0 = \{ \lam\}.$ Then for $z$ close to $\lam$, the following Laurant series expansion holds~\cite{limbook2, kato}
 \be \label{laurent} (A-zI)^{-1} = \sum^{\infty}_{j=0} S^{j+1}(z - \lam)^j - \frac{P}{z- \lam} - \sum^{\infty}_{j=1} \frac{D^j}{(z-\lam)^{j+1}},\ee
 where $ D := (T-\lam I)P$ and $S$ is the reduced resolvent of $A- \lam I.$ \\

 \begin{definition}\cite{limbook2} Let $ \lam \in \sig(A).$ Then $\lam$ is said to be a discrete eigenvalue of $A$ if $\lam$ is an isolated point of $\sig(A)$ and the spectral projection $P_{\lam} $ associated with $A$ and $\lam$ has finite rank. In such a case, $ m(\lam, A) := \rank(P_{\lam})$ is called the algebraic multiplicity of $\lam.$  The set $\sig_d(A)$ of all discrete eigenvalues of $A$ is called the discrete spectrum of $A.$ Let $ \lam \in \sig_d(A).$ If $\lam$ is a pole of order $\nu$ of $ (A-zI)^{-1}$ then $\nu$ is called the ascent of $\lam.$
 \end{definition}
 
 \von 
Let $ \lam \in \sig_d(A)$ and $P$ be the spectral projection of $A$ corresponding to $\lam.$ If $\nu$ is the ascent of $\lam$ then $ R(P) = N((A- \lam I)^\ell),$ see~\cite{limbook2}.

 \von
 \begin{remark} 
 It follows from (\ref{laurent}) that $\lam$ is a discrete eigenvalue $\Longleftrightarrow \lam$ is a pole of $(A- zI)^{-1}.$ Further, $ \nu$ is the ascent of $\lam$ if and only if $ D^{\nu-1} \neq 0$ and $D^{\nu} =0.$ Now if $ \sig_0 := \mathrm{Int}(\Gamma) \cap \sig(A) $ and $m(\sig_0, A) := \rank(P)$ is finite then $\sig_0$ consists of at most  $m(\sig_0, A)$  discrete eigenvalues of $A$ of total algebraic multiplicity $m(\sig_0, A).$
 \end{remark}

 \vone

 \begin{theorem} \cite{limbook2}
 	Suppose that $ \sig_0 := \sig(A)\cap \mathrm{Int}(\Gamma) = \{\lam_1, \ldots, \lam_n\}\subset \sig_d(A).$ Let $ P_j$ denote the spectral projection corresponding to $\lam_j.$ Then  
 	$$ P := - \frac{1}{2\pi i} \int_{\Gamma} R(z) dz = P_1+ \cdots + P_n.$$Further, $ P_iP_j = 0$ for $i\neq j$ and $ R(P) = R(P_1) \oplus \cdots \oplus R(P_n).$ Furthermore,  $$ AP = \sum^n_{j=1} (\lam_jP_j + D_j), \mbox{ where } \; D_j := (A-\lam_j I )P_j.$$ Also  $ \sig(A_{|R(P)}) = \sig_0 $ and $ \sig(A_{|N(P)}) = \sig(A)\setminus \sig_0.$ 
 \end{theorem}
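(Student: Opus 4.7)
The plan is to exploit the isolation of each $\lam_j$ in $\sig(A)$ and reduce everything to the single-eigenvalue case already captured by the definitions. Since each $\lam_j \in \sig_d(A)$ is an isolated point of $\sig(A)$, I can choose pairwise disjoint positively oriented rectifiable simple closed curves $\Gamma_1, \ldots, \Gamma_n$ in $\rho(A)$ with $\lam_j \in \mathrm{Int}(\Gamma_j) \subset \mathrm{Int}(\Gamma)$ and $\mathrm{Int}(\Gamma_j) \cap \sig(A) = \{\lam_j\}$. By definition $P_j = -\frac{1}{2\pi i}\int_{\Gamma_j} R(z)\,dz$, and the first claim $P = P_1 + \cdots + P_n$ follows from an operator-valued Cauchy theorem: the resolvent $R(z)$ is holomorphic on $\rho(A)$, and this set contains the multiply connected region bounded on the outside by $\Gamma$ and on the inside by the $\Gamma_j$, so contour deformation yields $\int_\Gamma R(z)\,dz = \sum_j \int_{\Gamma_j} R(z)\,dz$.

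For the orthogonality $P_iP_j = 0$ when $i \ne j$, I will mimic the computation already carried out in the excerpt to establish $P^2 = P$. Writing
$$P_iP_j = \frac{1}{(2\pi i)^2}\int_{\Gamma_i}\int_{\Gamma_j} R(z)R(w)\,dw\,dz = \frac{1}{(2\pi i)^2}\int_{\Gamma_i}\int_{\Gamma_j}\frac{R(z)-R(w)}{z-w}\,dw\,dz,$$
the point is that the disjoint curves $\Gamma_i$ and $\Gamma_j$ each lie entirely in the exterior of the other; hence the scalar Cauchy integrals $\int_{\Gamma_j} dw/(z-w)$ for $z\in\Gamma_i$ and $\int_{\Gamma_i} dz/(z-w)$ for $w\in\Gamma_j$ both vanish, so both halves of the double integral collapse to $0$.

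With $P = \sum_j P_j$, $P_j^2 = P_j$, and $P_iP_j = 0$ in hand, the decomposition $R(P) = R(P_1) \oplus \cdots \oplus R(P_n)$ is automatic: any $x \in R(P)$ equals $Px = \sum_j P_jx$ with $P_jx \in R(P_j)$, and if $\sum_j y_j = 0$ with $y_j \in R(P_j)$ then applying $P_k$ forces $y_k = P_k y_k = 0$. The formula for $AP$ is then just a rewriting: since $A$ commutes with each $P_j$, we have $AP_j = (A - \lam_j I)P_j + \lam_j P_j = D_j + \lam_j P_j$, and summation gives the stated identity. Finally, the spectrum statements $\sig(A_{|R(P)}) = \sig_0$ and $\sig(A_{|N(P)}) = \sig(A) \setminus \sig_0$ follow immediately from Theorem~\ref{spd} applied to the compact set $\sig_0 \subset \sig(A)$ enclosed by $\Gamma$. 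The main technical obstacle is the first step, the operator-valued contour deformation splitting $\int_\Gamma$ into $\sum_j \int_{\Gamma_j}$; this is standard once one views $R(z)$ as a holomorphic $BL(X)$-valued function on $\rho(A)$, but it is the load-bearing analytical fact on which the whole argument rests.
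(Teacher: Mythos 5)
Your proposal is correct and is exactly the elaboration the paper intends: the paper's proof is the one-line instruction ``Use contour integration'' followed by the $AP$ identity, and your argument fills in precisely the contour-deformation step $\int_\Gamma R = \sum_j \int_{\Gamma_j} R$, the resolvent-identity computation for $P_iP_j = 0$ (which indeed mirrors the $P^2 = P$ derivation already shown in the text), the direct-sum argument, and the appeal to Theorem~\ref{spd} for the spectral localization. Same route, just with the details written out.
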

 
 \vone

 \begin{proof}  Use contour integration. Note that $$ AP = AP_1+\cdots +AP_n = \sum^n_{j=1} (\lam_jP_j + D_j).$$\end{proof}

\section{Perturbation of invariant subspaces}
Let $ A \in \C^{n\times n}.$ A subspace $\mathcal{X} \subset \C^n$ is said to be an {\em invariant subspace} of $A$ if  $A \mathcal{X} \subset \mathcal{X}.$ Let $ X \in \C^{n\times m}$ be such that $\rank(X) =m.$  Let  $\span(X)$ denote the span of the columns of $X$.  Then $\mathcal{X} := \span(X)$ is an invariant subspace of $A$ if and only if    $ A X = XM$ for some matrix $M \in \C^{m\times m}.$  Now choose $Z$ such that  $ \bmatrix{ X & Z}\in \C^{n\times n}$ is nonsingular. Then  \be \label{upper} \bmatrix{X & Z}^{-1} A \bmatrix{X & Z}= \bmatrix{ M & C \\ 0 & N} \text{ and } \eig(A) = \eig(M) \cup \eig(N).\ee If $\eig(M)\cap \eig(N) = \emptyset$ then $\span(X)$ is called  the {\em  spectral subspace} (or maximal invariant subspace) of $A$ corresponding to $\eig(M).$

\vone 

\begin{definition} Let $ A \in \C^{n\times n}.$ A matrix pair $(X, M) \in \C^{n\times m}\times \C^{m\times m}$ is said to be a right invariant pair of $A$  if $\rank(X) = m$ and $ AX = XM.$ Further, $(X, M)$ is called a  maximal right invariant pair if $\span(X)$ is a spectral subspace of $A.$ 	Similarly, a matrix pair $(Y, L) \in \C^{n\times m}\times \C^{m\times m}$ is said to be a left invariant pair of $A$ if $\rank(Y) = m$ and $ Y^*A = L Y^*.$ Further, $(Y, L)$ is called a maximal left invariant pair if $\span(Y)$ is a spectral subspace of $A^*.$ 
\end{definition}

\vone
\noindent
{\bf Question:} Let $ (X, M)$ be a maximal right invariant pair of $A$ and $ E \in \C^{n\times n}.$  Does there exist a maximal right invariant pair $(X_E, M_E) $ of $ A+E$ such that $$ (X_E, M_E) \longrightarrow (X, M)  \text{ as } \|E\|_2 \rar 0?$$  
Set $\mathcal{X}_E := \span(X_E)$ and $ \mathcal{X} := \span(X).$  Does  $\mathcal{X}_E \longrightarrow \mathcal{X}$ as$\|E\|_2 \longrightarrow 0$?

\vone 
 
 Perturbation theory of spectral subspaces of matrices and operators is a classical topic and has been studied extensively, see for instance~\cite{stewart71,stewart73, stewbook, bhatiabook, kato, limbook2, chatelin}. A new bound has been obtained in~\cite{karow} by utilizing pseudospectrum of a matrix.  Inspired by~\cite{karow}, we  present a detailed analysis of perturbation of maximal invariant pairs and provide simplified proofs.   

\vone
Suppose that $ X := \bmatrix{ X_1 & X_2} \in \C^{n\times n}$ is nonsingular with $ X_1 \in \C^{n\times m}$ such that $$ X^{-1} AX = \bmatrix{ A_{11} & 0 \\ 0 & A_{22}} \; \text{ and  }\; \eig(A_{11})\cap \eig(A_{22}) = \emptyset, \text{ where } A_{11} \in \C^{m\times m}.$$  Define $ Y := (X^{-1})^*$ and partition $Y$ conformably as $Y = \bmatrix{ Y_1 & Y_2}$ with $ Y_1 \in \C^{n\times m}.$ Then $ Y^*AX = \diag(A_{11}, \; A_{22}).$ Further, $ (X_1, A_{11}) $ is a right invariant pair of $A$ and $ (Y_1, A_{11})$ is a left invariant pair of $A$. Similarly, $ (X_2, A_{22})$ is a right invariant pair of $A$ and $ (Y_2, A_{22})$ is a left invariant pair of $A.$ Furthermore, $ P_1 := X_1Y_1^*$ is the spectral projection of $A$ corresponding to $ \eig(A_{11})$ and $ P_2 := X_2 Y_2^*$ is the spectral projection of $ A$ corresponding to $ \eig(A_{22}).$ Obviously $ P_1 + P_2 = I$ and $\span(X_1) = R(P_1)$ is the spectral subspace of $A$ corresponding to $\eig(A_{11})$ and $\span(X_2) = R(P_2)$ is the spectral subspace of $A$ corresponding to $ \eig(A_{22}).$  Let $ \Gamma \subset \rho(A) $ be such that $ \eig(A_{11}) \subset \mathrm{Int}(\Gamma)$ and $ \eig(A_{22}) \subset \mathrm{Ext}(\Gamma).$ Then observe that $$ P = \frac{1}{2\pi i} \int_{\Gamma} (zI-A)^{-1} dz = X_1Y_1^*$$

\vone 
{\bf Block diagonalization.}  Let $ \widehat{A}$ denote the block upper triangular matrix in (\ref{upper}). Then $\widehat{A}$  is similar to the block diagonal matrix  $\diag(M, N) \Longleftrightarrow$  there exists $ R \in \C^{m\times (n-m)} $ such that 
$$ \bmatrix{ I_m & R\\ 0 & I_{n-m}} ^{-1}\bmatrix{ M & C \\ 0 & N} \bmatrix{ I_m & R\\ 0 & I_{n-m}} = \bmatrix{ M & 0 \\ 0 & N} \Longleftrightarrow MR- RN = -C.$$ Hence  $ A$ is block diagonalizable with diagonal blocks $M$ and $N$, that is, $A$ is similar to $ \diag(M, N) \Longleftrightarrow R$ is a  solution of the {\em Sylvester equation} $ MR-RN = -C.$ 
Suppose that $R$ exists. Define $$ X_1:= \bmatrix{ I_m \\ 0}, X_2 := \bmatrix{ R\\ I_{n-m}},  Y_1 :=\bmatrix{ I_m \\ -R^*} \text{ and } Y_2 := \bmatrix{ 0 \\ I_{n-m}}.$$ Then $ (X_1, M)$ is a right invariant pair of $\widehat{A}$ and $ (Y_1, M)$ is a left invariant pair of $\widehat{A}.$ Similarly, $(X_2, N)$ is a right invariant pair of $\widehat{A}$ and $(Y_2, N)$ is a left invariant pair of $\widehat{A}$. Further, if $\eig(M)\cap \eig(N) = \emptyset$ then $$ P_1 := X_1Y_1^* = \bmatrix{ I_m & -R\\ 0 & 0} \text{ and } P_2:= X_2Y_2^* = \bmatrix{ 0 & R \\ 0 & I_{n-m}}$$
are the spectral projections of $\widehat{A}$ corresponding to $ \eig(M)$ and $\eig(N)$, respectively. \\

Thus, in view of (\ref{upper}), it follows that $ (X, M)$ and $(XR+Z, N)$ are right invariant pairs of $A$  if and only if $ MR-RN = -C.$ The right invariant pairs of $A$ naturally lead to the Sylvester operator  $$ \mathbf{T} :  Y \longmapsto MY-YN.$$   It is well known~\cite{stewart73,stewbook}  that  $\mathbf{T}$  is invertible if and only if $ \eig(M)\cap\eig(N) = \emptyset$. Indeed, $\mathbf{T}(Y) = 0 \Longleftrightarrow  MY = YN \Longleftrightarrow p(M)Y = Y p(N)$ for any polynomial $p(z) \in \C[z].$  If $  \eig(M)\cap\eig(N) = \emptyset$ then, by considering $p(z) := \det( zI- M)$, it follows that $ Y = 0.$ Hence $\mathbf{T}$ is invertible. On the other hand, if $ \mu \in  \eig(M)\cap\eig(N) $ then there exist nonzero vectors $u$ and $ v$ such that $ Mu = \mu u$ and $ v^* N = \mu v^*$. Now  $ \mathbf{T}( uv^*) = Muv^* - uv^*N = \mu (uv^*- uv^*) = 0 \Longrightarrow \mathbf{T}$ is not invertible. \\

The separation of $\eig(M)$ and $\eig(N)$ ensures invertibility of $\mathbf{T}.$ For perturbation analysis of invariant subspaces, we now define separation of two square matrices~\cite{stewart73, stewbook}. 

\vone
\begin{definition} Let $ A \in \C^{m\times m}$ and $ B\in \C^{n\times n}$. Then the  separation  of  $A$ and $B$ is defined by   $$ \sep(A, B) := \min\{ \|AX-XB\|_2 : X \in \C^{m\times n} \text{ and } \|X\|_2 =1\}.$$ 
	
\end{definition}

\vone  For $ A \in \C^{m\times m}$ and $ B\in \C^{n\times n}$, consider the Sylvester operator $$ \mathbf{T} : \C^{m\times n} \longrightarrow \C^{m\times n}, \; X \longmapsto AX - XB.$$
Observe that  $\sep(A, B) >0 \iff \mathbf{T}$ is invertible $\iff \eig(A)\cap \eig(B) = \emptyset.$ Further, if $\mathbf{T}$ is invertible then $\sep(A, B) =  1/{\|\mathbf{T}^{-1}\|}.$   For the Frobenius norm,   \beano  \sep_F(A, B) &=&  \min\{ \|AX-XB\|_F : \|X\|_F =1\}  \\  &=&  \min\{ \|\vec(AX- XB)\|_2 : \|\vec(X)\|_2 =1\} \\ &= &
\min\{ \| (I_m\otimes A) - B^\top \otimes I_n)\vec(X)\|_2 : \|\vec(X)\|_2 =1\} \\ & =& \sigma_{\min}(I_m\otimes A - B^\top \otimes I_n). \eeano Here $\vec(X)$ denotes the column vector obtained by stacking columns of $X$ starting from the first column at the top and $A\otimes B$ is the Kronecker product of $A$ and $B.$ \vone 

Let $ \dist(S_1, S_2)$  denote the minimum distance between two subsets $S_1$ and $  S_2$  of $\C$ given by $$ \dist(S_1, S_2) := \inf\{ |s_1- s_2| : s_1 \in S_1 \text{ and } s_2 \in S_2\}.$$ 
It is easy to see that  $ \sep(A, B) \leq \dist(\eig(A), \eig(B)).$ However, $\sep(A, B)$ can be much smaller than $\dist(\eig(A), \eig(B)).$  It is well known~\cite{stewbook} that $\sep(A, B)$ as a function of $(A, B)$ is Lipschitz continuous. Indeed, we have the following result which follows from the definition of $\sep(A, B).$ See also \cite{stewart73, stewbook}. 

\vone 

\begin{proposition}\label{lipsep} We have  $ | \sep(A+E, B+F) - \sep(A, B) | \leq \|E\| + \|F\|$ for all $ E \in \C^{m\times m}$ and $ F \in \C^{n\times n}.$
\end{proposition}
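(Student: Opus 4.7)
The plan is to derive the bound directly from the definition $\sep(A,B) = \min\{\|AX - XB\|_2 : \|X\|_2 = 1\}$ using the triangle inequality, and then obtain the absolute value by a symmetry argument.

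First I would fix an arbitrary $X \in \C^{m\times n}$ with $\|X\|_2 = 1$ and rewrite
\[
(A+E)X - X(B+F) = (AX - XB) + (EX - XF).
\]
The triangle inequality together with submultiplicativity of the spectral norm gives
\[
\|(A+E)X - X(B+F)\|_2 \leq \|AX - XB\|_2 + \|E\|_2\|X\|_2 + \|X\|_2\|F\|_2,
\]
which, since $\|X\|_2 = 1$, is bounded by $\|AX-XB\|_2 + \|E\|_2 + \|F\|_2$. Taking the minimum over all such $X$ on the left-hand side and noting that the same minimum on the right-hand side yields $\sep(A,B)$, I obtain
\[
\sep(A+E, B+F) \leq \sep(A,B) + \|E\|_2 + \|F\|_2.
\]

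To get the reverse inequality, I would apply the same inequality with the roles swapped: replace $(A,B)$ by $(A+E, B+F)$ and the perturbations $(E,F)$ by $(-E,-F)$. This yields
\[
\sep(A, B) = \sep((A+E) - E,\, (B+F) - F) \leq \sep(A+E, B+F) + \|E\|_2 + \|F\|_2.
\]
Combining the two inequalities gives the desired bound $|\sep(A+E, B+F) - \sep(A,B)| \leq \|E\|_2 + \|F\|_2$.

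There is no real obstacle here; the only subtle point is making sure the minimum is attained (or replacing $\min$ by $\inf$), which is immediate since the unit sphere in $\C^{m\times n}$ is compact and the map $X \mapsto \|AX-XB\|_2$ is continuous. The proof is essentially a two-line reverse triangle inequality applied to the operator $X \mapsto AX - XB$ viewed as a linear operator on $\C^{m\times n}$.
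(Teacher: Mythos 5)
Your proof is correct and is exactly the argument the paper has in mind: the paper gives no explicit proof, stating only that the result ``follows from the definition of $\sep(A,B)$'' (with a citation to Stewart), and the triangle-inequality plus swap-the-roles argument you give is the standard way to make that precise. One small phrasing note: rather than ``taking the minimum on both sides,'' it is cleaner to say that the inequality $\|(A+E)X - X(B+F)\|_2 \leq \|AX-XB\|_2 + \|E\|_2 + \|F\|_2$ holds for every unit-norm $X$, so the left side is at least $\sep(A+E,B+F)$ for every $X$, and then minimizing the right side over $X$ gives $\sep(A+E,B+F) \leq \sep(A,B) + \|E\|_2 + \|F\|_2$; the rest is as you wrote.
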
 
\vone 

{\bf Angle between subspaces.} Let $ x, y \in \C^n$ be unit vectors. Then there exists a unique  $ \theta(x,y) \in [0, \pi/2] $ such that  $ \cos\theta(x, y) = | y^*x|.$ Thus $\theta(x,y)$ is the acute angle between $x$ and $y.$  Note that  $P_y := yy^*$ is an orthogonal projection on $\span(y)$ and that $ \cos\theta(x, y) = \|P_yx\|_2.$  Since   $ x = P_y x+ (I-P_y)x $, we have   $$ 1 = \|x\|_2^2 = \|P_yx\|_2^2 + \|(I- P_y)x\|_2^2 = \cos^2\theta(x, y) + \|(I-P_y)x\|_2^2$$ which shows that $ \sin\theta(x, y) = \|(I-P_y)x\|_2.$ 

\vone 

Let $ X, Y \in \C^{n\times m} $ be  isometry, that is, $X^*X = I_m = Y^*Y.$  Set $ \mathcal{X} := \span(X)$ and $ \mathcal{Y} := \span(Y).$    Then  $P_{\mathcal{X}}  := XX^*$ and $ P_{\mathcal{Y}} := YY^*$ are orthogonal projections on $\mathcal{X}$ and $\mathcal{Y},$ respectively.  Now, for a unit vector $u \in \mathcal{X},$ define the angles $\theta(u, \mathcal{Y}) \in [0, \pi/2]$ and $\theta_{\max}(\mathcal{X}, \mathcal{Y}) \in [0, \pi/2]$ by $$ \sin\theta(u, \mathcal{Y}) :=  \|(I - P_{\mathcal{Y}})u\|_2   \text{ and } \sin\theta_{\max}(\mathcal{X}, \mathcal{Y}) := \max_{\|u\|_2=1}\{ \sin\theta(u, \mathcal{Y}) : u \in \mathcal{X}\}.$$   Since $u \in \span(\mathcal{X})$ is a unit  vector, we have $ u = X v$ for some unit vector $v \in \C^m.$ Hence 
\beano \sin^2\theta(u, \mathcal{Y}) &=& \|(I - P_{\mathcal{Y}})u\|_2^2 = \|(I - YY^*) Xv\|_2^2  = v^* X^*( I- YY^*)Xv \\ & =& 1 - v^*X^*Y Y^*Xv \leq 1 - \lam_{\min}(X^*YY^*X)\eeano and the equality holds when $v$ is a unit eigenvector of $ H := X^*Y Y^*X$ corresponding to the smallest eigenvalue $ \lam_{\min}(H).$ Note that $\sig_{\min}(Y^*X) := \sqrt{\lam_{\min}(H)}$ is the smallest singular value of $Y^*X.$ Now, taking maximum over all unit vector $u \in \mathcal{X},$ we have 
$$\sin\theta_{\max}(\mathcal{X}, \mathcal{Y}) = \sqrt{1 - \sig^2_{\min}(Y^*X)} \Longrightarrow   \cos\theta_{\max}(\mathcal{X}, \mathcal{Y}) = \sig_{\min}(Y^*X).$$

This proves the following result. \vone 

\begin{proposition} \label{prop:angle} Let $ X, Y \in \C^{n\times m} $ be  isometry. Set $ \mathcal{X} := \span(X)$ and $ \mathcal{Y} := \span(Y).$ Then  $P_{\mathcal{X}}  := XX^*$ and $ P_{\mathcal{Y}} := YY^*$ are orthogonal projections on $\mathcal{X}$ and $\mathcal{Y},$ respectively. Define $\theta_{\max}(\mathcal{X}, \mathcal{Y}) \in [0, \pi/2]$ by $$ \sin\theta_{\max}(\mathcal{X}, \mathcal{Y}) := \max_{\|u\|_2=1}\{ \|(I - P_{\mathcal{Y}})u\|_2   : u \in \mathcal{X}\}.$$ Then 
$$\sin\theta_{\max}(\mathcal{X}, \mathcal{Y}) = \sqrt{1 - \sig^2_{\min}(Y^*X)} \text{ and }    \cos\theta_{\max}(\mathcal{X}, \mathcal{Y}) = \sig_{\min}(Y^*X).$$
\end{proposition}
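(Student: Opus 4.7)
The proof will essentially formalize the computation already sketched in the paragraph preceding the proposition statement, so the plan is mainly to organize it cleanly and flag which ingredients do the work.

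First I would parametrize unit vectors in $\mathcal{X}$. Since $X$ is an isometry, the map $v \mapsto Xv$ is a linear isomorphism $\C^m \to \mathcal{X}$ preserving the $2$-norm (because $\|Xv\|_2^2 = v^*X^*Xv = v^*v = \|v\|_2^2$). Hence $\{u \in \mathcal{X} : \|u\|_2 = 1\} = \{Xv : v \in \C^m, \|v\|_2=1\}$, and maximizing over $u$ is the same as maximizing over unit $v$.

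Next I would expand the squared norm using $P_{\mathcal{Y}} = YY^*$ and $Y^*Y = I_m$. Writing $u = Xv$ with $\|v\|_2 = 1$,
\begin{eqnarray*}
\|(I-P_{\mathcal{Y}})u\|_2^2 &=& v^*X^*(I-YY^*)(I-YY^*)Xv \\
&=& v^*X^*(I - YY^*)Xv \\
&=& 1 - v^*(X^*Y)(Y^*X)v,
\end{eqnarray*}
where I used that $(I-YY^*)$ is an orthogonal projection, hence idempotent and self-adjoint, together with $\|Xv\|_2 = 1$.

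Now I would take the maximum over unit $v$. Setting $H := (Y^*X)^*(Y^*X) = X^*YY^*X \in \C^{m\times m}$, the quantity $v^*Hv$ is the Rayleigh quotient of the Hermitian positive semidefinite matrix $H$, so
$$\max_{\|v\|_2 = 1} \bigl(1 - v^*Hv\bigr) = 1 - \min_{\|v\|_2=1} v^*Hv = 1 - \lambda_{\min}(H) = 1 - \sigma_{\min}^2(Y^*X),$$
the last equality being the definition of the smallest singular value. Taking square roots gives the formula for $\sin\theta_{\max}(\mathcal{X},\mathcal{Y})$, and since $\theta_{\max}(\mathcal{X},\mathcal{Y}) \in [0,\pi/2]$, the cosine is non-negative, so $\cos\theta_{\max}(\mathcal{X},\mathcal{Y}) = \sqrt{1 - \sin^2\theta_{\max}(\mathcal{X},\mathcal{Y})} = \sigma_{\min}(Y^*X)$.

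There is really no hard step here — the entire proof is a direct computation. The only point that requires any care is ensuring that the isometry hypotheses $X^*X = I_m = Y^*Y$ are used in the right places (to identify unit vectors in $\mathcal{X}$ with unit vectors in $\C^m$, and to ensure $YY^*$ is a genuine orthogonal projection so that $(I-YY^*)^2 = I-YY^*$). Note also that one needs $m \le n$ implicitly for the isometries to exist, and the singular value $\sigma_{\min}(Y^*X)$ is the $m$-th singular value of the $m\times m$ matrix $Y^*X$, which lies in $[0,1]$ since $\|Y^*X\|_2 \le 1$.
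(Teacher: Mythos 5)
Your proof is correct and follows essentially the same route as the paper: parametrize unit vectors in $\mathcal{X}$ as $Xv$ via the isometry, reduce $\|(I-P_{\mathcal{Y}})u\|_2^2$ to $1 - v^*X^*YY^*Xv$, and optimize the Rayleigh quotient of $H = X^*YY^*X$ to land on $\lambda_{\min}(H) = \sigma_{\min}^2(Y^*X)$. The only difference is that you spell out the idempotency of $I - YY^*$ explicitly, which the paper elides; this is just a cleanly filled-in version of the paper's sketch.
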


\vone In view of Proposition~\ref{prop:angle}, we now define the largest canonical angle between the subspaces $\mathcal{X}$ and $ \mathcal{Y}$ as follows. 
\vone 
\begin{definition} Let $ X, Y \in \C^{n\times m} $ be  isometry. Set $ \mathcal{X} := \span(X)$ and $ \mathcal{Y} := \span(Y).$	Define $\theta_{\max}(\mathcal{X}, \mathcal{Y}) \in [0, \pi/2]$ by  $ \cos \theta_{\max}(\mathcal{X}, \mathcal{Y}) :=  \sig_{\min}(Y^*X),$ where $\sig_{\min}(A)$ denotes the smallest singular value of a matrix $A.$  Then   $\theta_{\max}(\mathcal{X}, \mathcal{Y}) $ is called the  largest canonical angle between $\mathcal{X}$ and $\mathcal{Y}.$ 
\end{definition}

 \vone 
 \noindent  {\bf Exercise:}  Show that $ \sin\theta_{\max}(\mathcal{X}, \mathcal{Y}) = \|(I- P_{\mathcal{Y}})P_{\mathcal{X}}\|_2 =  \|(I- P_{\mathcal{X}})P_{\mathcal{Y}}\|_2 = \sin \theta_{\max}(\mathcal{Y}, \mathcal{X}).$  Deduce that  $ \sin \theta_{\max}(\mathcal{X}, \mathcal{Y}) =0 \iff \mathcal{X} = \mathcal{Y}.$ Further, show that $$\sin \theta_{\max}(\mathcal{X}, \mathcal{Y}) =  \|P_{\mathcal{X}} - P_{\mathcal{Y}}\|_2.\; \blacksquare$$

 \vone We have the following result which will be used later. \vone 
 
 \begin{proposition}\label{angle}
 	Consider $ \mathcal{X}  := \span\left(\bmatrix{ I_m \\ 0}\right)$ and $ \mathcal{Y} :=  \span\left(\bmatrix{ I_m \\ Z}\right),$  where $ Z \in \C^{(n-m)\times m}.$ Then   $$  \cos \theta_{\max}(\mathcal{X}, \mathcal{Y})   = ( 1+ \|Z\|_2^2)^{-1/2} \text{ and }    \tan\theta_{\max}(\mathcal{X}, \mathcal{Y}) =  \|Z\|_2.$$
 
 \end{proposition}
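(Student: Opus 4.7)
My plan is to reduce the problem to a direct application of the definition $\cos\theta_{\max}(\mathcal{X},\mathcal{Y}) = \sigma_{\min}(Y^*X)$, once I replace the given spanning matrices by genuine isometries whose columns form orthonormal bases for $\mathcal{X}$ and $\mathcal{Y}$.

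First I would note that $\bmatrix{I_m \\ 0}$ is already an isometry, so I set $X := \bmatrix{I_m \\ 0}$. On the other hand, the matrix $W := \bmatrix{I_m \\ Z}$ spans $\mathcal{Y}$ but is not isometric; its Gram matrix is $W^*W = I_m + Z^*Z$, which is Hermitian positive definite. The standard remedy is to set $Y := W(I_m + Z^*Z)^{-1/2}$, and a one-line check gives $Y^*Y = I_m$, so $Y$ is an isometry and $\span(Y) = \span(W) = \mathcal{Y}$.

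Next I would compute $Y^*X$ directly:
\[
Y^*X = (I_m + Z^*Z)^{-1/2}\bmatrix{I_m & Z^*}\bmatrix{I_m \\ 0} = (I_m + Z^*Z)^{-1/2}.
\]
Since $(I_m+Z^*Z)^{-1/2}$ is Hermitian positive definite, its singular values coincide with its eigenvalues, and $\sigma_{\min}((I_m+Z^*Z)^{-1/2}) = \lambda_{\max}(I_m+Z^*Z)^{-1/2} = (1+\|Z\|_2^2)^{-1/2}$, using $\|Z\|_2^2 = \lambda_{\max}(Z^*Z)$. By the definition of largest canonical angle,
\[
\cos\theta_{\max}(\mathcal{X},\mathcal{Y}) = \sigma_{\min}(Y^*X) = (1+\|Z\|_2^2)^{-1/2}.
\]

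Finally, using $\sin^2\theta_{\max} = 1 - \cos^2\theta_{\max} = \|Z\|_2^2/(1+\|Z\|_2^2)$, I would take the quotient to obtain $\tan\theta_{\max}(\mathcal{X},\mathcal{Y}) = \|Z\|_2$. There is no real obstacle here; the only point requiring mild care is the justification that $(I_m+Z^*Z)^{-1/2}$ is well defined and Hermitian positive definite (so that its smallest singular value equals its smallest eigenvalue), which follows immediately from $Z^*Z \succeq 0$.
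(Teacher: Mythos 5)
Your proof is correct and follows essentially the same route as the paper: you orthonormalize $\bmatrix{I_m\\Z}$ via the factor $(I_m+Z^*Z)^{-1/2}$, compute $Y^*X=(I_m+Z^*Z)^{-1/2}$, identify its smallest singular value as $(1+\|Z\|_2^2)^{-1/2}$, and then pass to the tangent. The only cosmetic difference is that you spell out the singular-value/eigenvalue identification for Hermitian positive definite matrices, which the paper leaves implicit.
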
 
 
 \vone 
 \begin{proof}
  Set $  X := \bmatrix{ I_m \\ 0} $ and $  Y := \bmatrix{ I_m \\ Z}( I_m+ Z^*Z)^{-1/2}$.  Then $ Y$ is an isometry and $\mathcal{Y} = \span(Y).$   We have 
 $$\cos \theta_{\max}(\mathcal{X}, \mathcal{Y}) = \sig_{\min}(Y^*X)  = \sig_{\min}(( I_m+ Z^*Z)^{-1/2})  = ( 1+ \|Z\|_2^2)^{-1/2}$$ and $\sin\theta_{\max}(\mathcal{X}, \mathcal{Y}) = \|Z\|_2 /{ ( 1+ \|Z\|_2^2)^{1/2}}.$ 
 Hence $  \tan\theta_{\max}(\mathcal{X}, \mathcal{Y}) =  \|Z\|_2.$
 \end{proof} 
 \vone \von
 
 Pictorially, the result in Proposition~\ref{angle}  can be illustrated as follows.

 $$\mathcal{X} := \span\left( \bmatrix{ I\\ 0}\right)	\text{ and }   \mathcal{Y} := \span\left( \bmatrix{ I\\ Z}\right)$$
 
 \von
 \begin{center}
 	\begin{tikzpicture}
 		
 		\draw [thick, black, ->] (0,-.5) -- (0,2)      
 		node [above, black] {$\bmatrix{0 \\ I}$};              
 		
 		\draw [thick, black, ->] (-1,0) -- (3,0)      
 		node [right, black] {$\bmatrix{I\\ 0}$};              
 		
 		%
 		\draw [draw=blue, thick, ->] (0,0) -- (2.5981, 1.5) 
 		node [right, black]{{\small $\bmatrix{I\\ Z} $}};

 		\draw[->] (1,0) arc (0:10:3cm);  
 		
 		
 		\draw [thick, blue, ->] (0,0) -- (2.5981,0) 
 		node [] at (1.3, .3) {{\small $\;\;\theta_{\max}$}};  
 	\end{tikzpicture} 
 \end{center}

 $$ \text{Figure-1:} \; \cos \theta_{\max}(\mathcal{X}, \mathcal{Y}) =   ( 1+\|Z\|_2^2)^{-1/2} \text{  and  }   \tan\theta_{\max}(\mathcal{X}, \mathcal{Y}) = \|Z\|_2.$$

\vone

 Now, we turn to block upper triangularization of $A$ and the existence of right invariant pairs.  If $ A := \bmatrix{ A_{11} & A_{12} \\ 0 & A_{22}}$ with $ A_{11} \in \C^{m\times m}$ then obviously  $ \left( \bmatrix{ I_m \\ 0}, A_{11}\right)$  is a right invariant pair of $A.$   We now investigate the existence of a right  invariant pair of $ A := \bmatrix{ A_{11} & A_{12} \\ A_{21} & A_{22}}$ of the form $ \left( \bmatrix{ I_m \\ X}, A_{11}+A_{12}X\right)$ \\ 

%
%
%
%
%
%
%

\begin{theorem} \label{lem:ricati}
	Let $ A := \left[ \begin{matrix} A_{11} & A_{12} \\ A_{21} & A_{22}   \end{matrix}\right] \in \C^{n\times n},$ where $A_{11} \in \C^{m\times m}.$  Then there exists $X \in \C^{(n-m)\times m} $ such that  $$ \left[ \begin{matrix} I_m &  0 \\ X & I_{n-m}  \end{matrix}\right]^{-1} \left[ \begin{matrix} A_{11} & A_{12} \\ A_{21} & A_{22}   \end{matrix}\right] \left[ \begin{matrix} I_m &  0 \\ X & I_{n-m}  \end{matrix}\right] =  \left[ \begin{matrix} A_{11}+ A_{12}X  &  A_{12} \\ 0 & A_{22}- XA_{12}  \end{matrix}\right] $$
	if and only if $X$ is a solution of the Riccati equation $A_{22}X - XA_{11}- XA_{12}X+ A_{21} =0.$ \\

	 Hence $\left( \left[ \begin{matrix} I \\  X\end{matrix}\right], \, A_{11}+ A_{12}X\right)$ and  $\left(\left[ \begin{matrix} -X^*  \\ I\end{matrix}\right], \, A_{22}- X A_{12}\right)$ are  right and left invariant pairs of $A$ if and only if  $A_{22}X - XA_{11}- XA_{12}X+ A_{21} =0.$
\end{theorem}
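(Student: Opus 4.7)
The proof reduces to a direct block-matrix computation, so my plan is to organize the algebra so that the Riccati equation emerges cleanly as the vanishing of a single block, and then read off the two invariant-pair statements as immediate corollaries.

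First I would observe that the transformation matrix is unit lower block-triangular and hence trivially invertible with
$$\bmatrix{ I_m & 0 \\ X & I_{n-m}}^{-1} = \bmatrix{ I_m & 0 \\ -X & I_{n-m}}.$$
Then I would carry out the conjugation $M^{-1} A M$ in two steps. Left-multiplying $A$ by $\bmatrix{ I & 0 \\ -X & I}$ replaces the second block row by $\bmatrix{A_{21}-XA_{11} & A_{22}-XA_{12}}$, leaving the first block row untouched. Right-multiplying the result by $\bmatrix{ I & 0 \\ X & I}$ adds $X$ times the second block column to the first. A direct expansion shows the $(1,1)$ block becomes $A_{11}+A_{12}X$, the $(1,2)$ block stays $A_{12}$, the $(2,2)$ block becomes $A_{22}-XA_{12}$, and the $(2,1)$ block equals
$$(A_{21}-XA_{11}) + (A_{22}-XA_{12})X = A_{21} + A_{22}X - XA_{11} - XA_{12}X.$$
The conjugation produces the claimed block upper-triangular form if and only if this $(2,1)$ block vanishes, which is exactly the Riccati equation $A_{22}X - XA_{11} - XA_{12}X + A_{21} = 0$. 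This settles the equivalence.

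For the invariant-pair assertions, I would simply verify them by direct substitution, now that the Riccati identity is available. Reading off the first block column of the identity $A M = M \bmatrix{A_{11}+A_{12}X & A_{12} \\ 0 & A_{22}-XA_{12}}$ yields
$$A \bmatrix{I \\ X} = \bmatrix{I \\ X}(A_{11}+A_{12}X),$$
which, together with the fact that $\bmatrix{I\\ X}$ obviously has rank $m$, gives the right invariant pair. Symmetrically, reading off the second block row of the identity $M^{-1}A = \bmatrix{A_{11}+A_{12}X & A_{12} \\ 0 & A_{22}-XA_{12}} M^{-1}$ gives
$$\bmatrix{-X & I}\, A = (A_{22}-XA_{12})\bmatrix{-X & I},$$
i.e.\ $\bmatrix{-X^*\\ I}^{\!*} A = (A_{22}-XA_{12}) \bmatrix{-X^*\\ I}^{\!*}$, which is precisely the left invariant pair statement.

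There is no genuine obstacle here: the only thing to watch is the bookkeeping of signs in the $(2,1)$ block and the distinction between the Riccati form stated in the theorem ($A_{22}X - XA_{11} - XA_{12}X + A_{21} = 0$) and the equivalent form obtained by flipping signs. Both invariant-pair statements are then a one-line consequence of the block-diagonalization already produced, so I would present them without a separate calculation.
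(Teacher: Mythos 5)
Your proof is correct and follows essentially the same route as the paper's: compute the conjugation explicitly, identify the $(2,1)$ block with the Riccati operator $\mathbf{R}(X) = A_{22}X - XA_{11} - XA_{12}X + A_{21}$, and read off the right and left invariant pairs from the first block column of $AM = M\widehat{A}$ and the second block row of $M^{-1}A = \widehat{A}M^{-1}$. No gaps.
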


\vone 
\begin{proof}  Define $  \mathbf{R}(X) := A_{22}X - XA_{11}- XA_{12}X+ A_{21}$.  Then we have 
	$$
	\left[ \begin{matrix} I &  0 \\ -X & I  \end{matrix}\right] \left[ \begin{matrix} A_{11} & A_{12} \\ A_{21} & A_{22}   \end{matrix}\right] \left[ \begin{matrix} I &  0 \\ X & I  \end{matrix}\right] = \left[ \begin{matrix} A_{11}+ A_{12}X  &  A_{12} \\ \mathbf{R}(X) & A_{22}- XA_{12}  \end{matrix}\right]. $$ Hence the first assertion holds $\Longleftrightarrow \mathbf{R}(X) = 0.$ 	Again, it follows that $$ A \left[ \begin{matrix} I \\  X\end{matrix}\right] = \left[ \begin{matrix} I \\  X\end{matrix}\right]  (A_{11}+ A_{12}X) \mbox{ and }\left[\begin{matrix} - X & I \end{matrix}\right] A =  (A_{22}- XA_{12})\left[\begin{matrix} - X & I \end{matrix}\right] \Longleftrightarrow \mathbf{R}(X) = 0. $$  Hence the desired results follow.
\end{proof}

\vone  Next, we  investigate the existence of a solution of a Riccati operator equation. Let $ (A, B, C, D)  \in \C^{m\times m} \times \C^{n\times n} \times \C^{n\times m} \times \C^{m\times n}.$  Consider the Riccati operator $\mathbf{R} : \C^{m\times n} \longrightarrow \C^{m\times n}, X \longmapsto AX-XB - X C X +D. $ Then  \, \be\label{reccati}  \mathbf{R}(X) = AX-XB - X C X +D = \left[\begin{matrix} -X & I \end{matrix} \right] \left[\begin{matrix} B & C \\ D & A \end{matrix} \right] \left[\begin{matrix} I \\ X \end{matrix} \right].\ee
Observe that $R(X) = 0$ is a quadratic matrix equation in $X$ and is called algebraic Riccati equation (ARE). 

%
%
%
%
%
%
%

\vone 
\begin{theorem} \label{th:riccati} Suppose that $ s := \sep(A, B) >0$ and that $ 4 \|C\|\, \|D\| < s^2.$ Then there is a unique solution $X$ of the Riccati equation $ \mathbf{R}(X) := AX-XB-XCX + D =0$ such that $$\|X\| \leq \frac{2 \|D\|}{s + \sqrt{s^2- 4 \|C\|\, \|D\|}} \leq \frac{2 \|D\|}{s}.$$
\end{theorem}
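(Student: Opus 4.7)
The plan is to reformulate the Riccati equation as a fixed-point equation and apply the Banach contraction principle. Rewrite $\mathbf{R}(X) = 0$ as $AX - XB = XCX - D$, i.e.\ $\mathbf{T}(X) = XCX - D$, where $\mathbf{T}(X) := AX - XB$ is the Sylvester operator. Since $\sep(A,B) = s > 0$, the operator $\mathbf{T}$ is invertible with $\|\mathbf{T}^{-1}\| = 1/s$ (by the discussion just before the definition of $\sep$). Thus the equation is equivalent to the fixed-point problem $X = \Phi(X)$, where $\Phi(X) := \mathbf{T}^{-1}(XCX - D)$.

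Next I would choose the right radius. For $\|X\| \leq r$ one has $\|\Phi(X)\| \leq (r^2\|C\| + \|D\|)/s$, so $\Phi$ maps the closed ball $B_r := \{X : \|X\| \leq r\}$ into itself provided $\|C\|r^2 - sr + \|D\| \leq 0$. Since $4\|C\|\,\|D\| < s^2$, this quadratic in $r$ has two positive real roots, and the smaller one is
$$r_{-} = \frac{s - \sqrt{s^2 - 4\|C\|\,\|D\|}}{2\|C\|} = \frac{2\|D\|}{s + \sqrt{s^2 - 4\|C\|\,\|D\|}} \leq \frac{2\|D\|}{s},$$
using the conjugate identity (when $\|C\| = 0$ the formula degenerates to $\|D\|/s$, and the linear Sylvester equation is handled directly by $X = -\mathbf{T}^{-1}(D)$). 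Setting $r := r_{-}$ gives the target bound stated in the theorem.

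The key step is to check that $\Phi$ is a strict contraction on $B_r$. For $X_1, X_2 \in B_r$, the identity $X_1CX_1 - X_2CX_2 = (X_1 - X_2)CX_1 + X_2 C(X_1 - X_2)$ yields
$$\|\Phi(X_1) - \Phi(X_2)\| \leq \frac{\|C\|(\|X_1\| + \|X_2\|)}{s}\,\|X_1 - X_2\| \leq \frac{2r\|C\|}{s}\,\|X_1 - X_2\|.$$
With $r = r_{-}$, the Lipschitz constant equals $(s - \sqrt{s^2 - 4\|C\|\,\|D\|})/s$, which is strictly less than $1$ because $4\|C\|\,\|D\| < s^2$ forces $\sqrt{s^2 - 4\|C\|\,\|D\|} > 0$.

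The Banach contraction principle, applied to the complete metric space $(B_r, \|\cdot\|)$, then produces a unique fixed point $X \in B_r$, and this $X$ is the unique solution of $\mathbf{R}(X)=0$ satisfying the norm bound. The main obstacle is not any deep difficulty but rather the algebraic manipulation picking the correct radius: one must recognize that the natural self-map condition is exactly the inequality $\|C\|r^2 - sr + \|D\| \leq 0$, that its smaller root $r_{-}$ is both the natural choice for a tight bound and, by the conjugate identity, reproduces the clean expression $2\|D\|/(s + \sqrt{s^2 - 4\|C\|\,\|D\|})$ appearing in the theorem.
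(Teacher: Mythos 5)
Your proposal is correct and follows essentially the same route as the paper: reformulate $\mathbf{R}(X)=0$ as a fixed-point equation via the inverse Sylvester operator, verify that the map sends the closed ball of radius $r_- = 2\|D\|/(s+\sqrt{s^2-4\|C\|\|D\|})$ into itself, check the contraction estimate with Lipschitz constant $2r_-\|C\|/s < 1$, and invoke the Banach fixed point theorem. The only cosmetic differences are that you compute the contraction constant exactly as $(s-\sqrt{s^2-4\|C\|\|D\|})/s$ rather than bounding it by $4\|C\|\|D\|/s^2$, and you note the degenerate case $\|C\|=0$ explicitly; neither changes the substance.
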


\begin{proof} Since $s >0,$ the Sylvester operator  $ \mathbf{T}(X) := AX -XB$ is invertible.  Now define $ \mathbf{S} : \C^{m\times n} \longrightarrow \C^{m\times n}$and $ \phi : [0, \infty) \rar \R$  by $$ \mathbf{S}(X) := \mathbf{T}^{-1}( D + XCX) \mbox{ and }  \phi(x) := \frac{ \|D\| + \|C\| x^2}{s}.$$
Then $\phi$ is strictly increasing and $\|\mathbf{S}(X)\| \leq  \phi(\|X\|)$ for all $ X \in \C^{m\times n}.$  Notice that $X$ is a solution of the Riccati equation $\mathbf{R}(X) = 0 \Longleftrightarrow \mathbf{S}(X) = X $, that is,  if and only if $ X$ is a fixed point of $\mathbf{S}.$ It is easy to see that $\phi$ has two fixed points and the smallest fixed point is given by $$ p :=  \frac{2 \|D\|}{s + \sqrt{s^2- 4 \|C\|\, \|D\|}} \leq \frac{2 \|D\|}{s}.$$ Indeed, we have $\phi(x) = x \Longleftrightarrow q(x) :=\|C\| x^2 - sx + \|D\| =0.$ The quadratic  $q(x)$ has two distinct real roots as the discriminant $ s^2 - 4 \|C\|\, \|D\| >0$ and the smallest root is given by  $p.$

Now consider the closed ball $ B[0, p] := \{ X \in \C^{m\times n} :  \|X\| \leq  p\}.$  Since $ \phi$ is strictly increasing, for $ X \in B[0, p],$ we have $\|X\| \leq p \Longrightarrow  \|\mathbf{S}(X)\| \leq \phi(\|X\|) \leq \phi(p) = p$. This shows that $X \in B[0, p] \Longrightarrow \mathbf{S}(X) \in B[0, p],$ that is, $\mathbf{S} : B[0, p] \longrightarrow B[0, p].$ Now for $X, Y \in B[0, p],$  we have  \beano \| \mathbf{S}(X) - \mathbf{S}(Y)\| &\leq&  \frac{\|XCX-YCY\|}{s}  \leq \frac{ \|C\| (\|X\|+\|Y\|) \|X-Y\|}{s} \\ & \leq & \frac{ 2 p \|C\|}{s} \, \|X-Y\| = \alpha \|X-Y\|,
\eeano where $ \alpha := 2p \|C\|/s \leq 4 \|C\|\, \|D\| /{s^2} < 1.$  This shows that the restriction of $\mathbf{S}$ on the ball $B[0, p]$ is a contraction map. Hence, by Banach fixed point theorem, $\mathbf{S}$ has a unique fixed point  $X \in B[0, p].$
\end{proof}

\vone 

We now utilize Theorem~\ref{th:riccati} to derive a perturbation bound for a right invariant pair of a block upper triangular matrix.   The following bound is  essentially a variant of Stewart's bound~\cite{stewart73,stewbook}.\\

\begin{theorem}\label{th:inv1} Let $ A := \left[ \begin{matrix} A_{11} & A_{12} \\ 0 & A_{22}   \end{matrix}\right]$ and $E :=\left[ \begin{matrix} E_{11} & E_{12} \\ E_{21} & E_{22}   \end{matrix}\right]$ be $n\times n$ matrices partitioned conformably,  where $A_{11}$ is an $m\times m$ matrix.  Then  $  A \left[ \begin{matrix} I_m \\  0  \end{matrix}\right] = \left[ \begin{matrix} I_m \\ 0  \end{matrix}\right]A_{11}.$ 
Suppose that $s_E := \sep(A_{22}+E_{22}, \, A_{11}+E_{11}) >0$ and that $ 4 \|A_{12}+E_{12}\| \, \|E_{21}\| < s_E^2.$ Then there is a unique matrix $X$ such that $(A+E)\left[ \begin{matrix} I \\  X  \end{matrix}\right] = \left[ \begin{matrix} I \\  X \end{matrix}\right] M$ and
$$ \|X\| \leq  \frac{2 \|E_{21}\|}{s_E + \sqrt{s_E^2- 4 \|A_{12}+E_{12}\|\, \|E_{21}\|}} \leq \frac{2 \|E_{21}\|}{s_E},$$ where $M :=(A_{11}+E_{11}) + (A_{12}+E_{12})X.$
\end{theorem}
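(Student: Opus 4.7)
The plan is to recognize this theorem as a direct corollary of the two preceding results, Theorem~\ref{lem:ricati} (which characterizes right invariant pairs of a $2\times 2$ block matrix via the Riccati equation) and Theorem~\ref{th:riccati} (which provides existence, uniqueness, and a norm bound for the Riccati solution). Essentially, we apply Theorem~\ref{lem:ricati} not to $A$ but to $A+E$, which is a general (not block triangular) matrix, and then use Theorem~\ref{th:riccati} to verify the hypotheses for solvability of the associated Riccati equation.

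First, I would partition $A+E$ conformably as
\[
A+E = \bmatrix{ A_{11}+E_{11} & A_{12}+E_{12} \\ E_{21} & A_{22}+E_{22} }.
\]
By Theorem~\ref{lem:ricati} applied to $A+E$, a matrix $X \in \C^{(n-m)\times m}$ yields a right invariant pair $\left(\bmatrix{I_m \\ X},\ M\right)$ of $A+E$ with $M = (A_{11}+E_{11}) + (A_{12}+E_{12})X$ if and only if $X$ satisfies the Riccati equation
\[
\mathbf{R}(X) := (A_{22}+E_{22})X - X(A_{11}+E_{11}) - X(A_{12}+E_{12})X + E_{21} = 0.
\]
So the problem reduces entirely to solving this Riccati equation.

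Next, I would apply Theorem~\ref{th:riccati} with the identifications $A \leftrightarrow A_{22}+E_{22}$, $B \leftrightarrow A_{11}+E_{11}$, $C \leftrightarrow A_{12}+E_{12}$, and $D \leftrightarrow E_{21}$. The separation quantity in that theorem becomes precisely $s_E = \sep(A_{22}+E_{22},\, A_{11}+E_{11})$, and the smallness hypothesis becomes $4\|A_{12}+E_{12}\|\,\|E_{21}\| < s_E^2$ — both assumed in the statement. Theorem~\ref{th:riccati} then delivers a unique solution $X$ satisfying
\[
\|X\| \leq \frac{2\|E_{21}\|}{s_E + \sqrt{s_E^2 - 4\|A_{12}+E_{12}\|\,\|E_{21}\|}} \leq \frac{2\|E_{21}\|}{s_E},
\]
which is exactly the bound claimed. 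Combining this with the first step completes the proof.

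There is essentially no hard step here — the substance lies in Theorems~\ref{lem:ricati} and~\ref{th:riccati}, and the present theorem is a clean specialization obtained by applying them to $A+E$. The only thing to be careful about is bookkeeping: making sure the off-diagonal block in position $(2,1)$ of $A+E$ is $E_{21}$ alone (since the corresponding block of $A$ vanishes), so that the constant term $D$ in the Riccati equation is $E_{21}$ and the norm bound comes out in terms of $\|E_{21}\|$ rather than some combined quantity. Once that identification is made, the result is immediate.
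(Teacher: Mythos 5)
Your proposal is correct and matches the paper's own proof: the paper also rewrites $A+E$ with block entries $\hat A_{ij}$, observes that $(A+E)\bmatrix{I\\X} = \bmatrix{I\\X}M$ is equivalent to $M = \hat A_{11}+\hat A_{12}X$ together with the Riccati equation $\hat A_{22}X - X\hat A_{11} - X\hat A_{12}X + E_{21} = 0$, and then invokes Theorem~\ref{th:riccati} with exactly your identifications. Your explicit appeal to Theorem~\ref{lem:ricati} for the equivalence is merely a more verbose version of the same first step.
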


\begin{proof} Setting $A+E = \left[ \begin{matrix} A_{11}+E_{11} & A_{12}+E_{12} \\ E_{21} & A_{22} +E_{22}  \end{matrix}\right] =: \left[ \begin{matrix} \hat A_{11} & \hat A_{12} \\ E_{21} & \hat A_{22}   \end{matrix}\right],$ it follows that $ (A+E)\left[ \begin{matrix} I \\  X  \end{matrix}\right] = \left[ \begin{matrix} I \\  X \end{matrix}\right] M \Longleftrightarrow \hat A_{11}+ \hat A_{12} X = M$ and $ \hat A_{22}X- X \hat A_{11} - X \hat A_{12} X + E_{21} =0.$ Now by Theorem~\ref{th:riccati} the Riccati equation $\hat A_{22}X- X \hat A_{11} - X \hat A_{12} X + E_{21} =0$ has a unique solution $X$ satisfying the stated bound.
\end{proof}

\vone \begin{remark} \label{rem:angle}  We mention that the bound in Theorem~\ref{th:inv1} holds for any sub-multiplicative matrix norm. Set $\mathcal{X} := \span\left(\bmatrix{ I_m \\ 0} \right)  $ and $ \mathcal{X}_E := \span\left(\bmatrix{ I_m \\ X} \right).$ Then we have  $ A \mathcal{X} \subset \mathcal{X}$ and $ (A+E) \mathcal{X}_E \subset \mathcal{X}_E.$  	 For the special case of $2$-norm and Frobenius norm, by Proposition~\ref{angle}, we have $$ \tan\theta_{\max}(\mathcal{X}, \mathcal{X}_E) \leq \|X\|  \leq  \frac{2 \|E_{21}\|}{s_E + \sqrt{s_E^2- 4 \|A_{12}+E_{12}\|\, \|E_{21}\|}} \leq \frac{2 \|E_{21}\|}{s_E}.$$

Let $ \mathcal{Y} := \span\left( \left[ \begin{matrix} 0 \\  I_{n-m}\end{matrix}\right]\right)$ and $ \mathcal{Y}_E := \span\left( \left[ \begin{matrix} -X^* \\  I_{n-m}\end{matrix}\right]\right)$. Then we have $A^*\mathcal{Y} \subset \mathcal{Y}$ and $(A+E)^*\mathcal{Y}_E \subset \mathcal{Y}_E$, that is, $\mathcal{Y}$ and $ \mathcal{Y}_E$ are left invariant subspaces of $A$ and $ A+E$, respectively. Further, we have  $$\tan \theta_{max}(\mathcal{Y}, \mathcal{Y}_E) \leq \|X\| \leq \frac{2 \|E_{21}\|}{s_E + \sqrt{s_E^2- 4 \|A_{12}+E_{12}\|\, \|E_{21}\|}} \leq \frac{2 \|E_{21}\|}{s_E}. $$ Also, note that $\sin\theta_{\max}(\mathcal{X}, \mathcal{X}_E) \leq  \tan \theta_{max}(\mathcal{X}, \mathcal{X}_E)$.

\end{remark} 

\vone 

Observe that in Theorem~\ref{th:inv1}, the matrix $E$ implicitly appears on both sides of the condition  $ 4 \|A_{12}+E_{12}\| \, \|E_{21}\| < s_E^2$  which may seem to be problematic. However, in view of Proposition~\ref{lipsep}, the implicit dependence can be removed by defining $ \delta_E := \sep(A_{22}, A_{11})- (\|E_{11}\| + \|E_{22}\|)$ and replacing $s_E$ with $\delta_E$  which leads to the following  bound obtained by  Stewart~\cite{stewart73, stewbook}. \\

\begin{corollary}[Stewart] Let $A$ and $ E$ be as in Theorem~\ref{th:inv1}. Define $ \delta_E := \sep(A_{22}, A_{11})- (\|E_{22}\| + \|E_{11}\|).$  If $ \delta_E >0$ and $4 \|A_{12}+E_{12}\| \, \|E_{21}\| < \delta_E^2$ then there is a unique matrix $X$ such that $(A+E)\left[ \begin{matrix} I \\  X  \end{matrix}\right] = \left[ \begin{matrix} I \\  X \end{matrix}\right] M$ and
	\be\label{stewbd} \|X\| \leq  \frac{2 \|E_{21}\|}{\delta_E + \sqrt{\delta_E^2- 4 \|A_{12}+E_{12}\|\, \|E_{21}\|}} \leq \frac{2 \|E_{21}\|}{\delta_E},\ee where $M :=(A_{11}+E_{11}) + (A_{12}+E_{12})X.$
\end{corollary}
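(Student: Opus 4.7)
The plan is to derive this Stewart-style bound essentially as a direct corollary of Theorem~\ref{th:inv1} by replacing the perturbation-dependent quantity $s_E = \sep(A_{22}+E_{22},\, A_{11}+E_{11})$ with the perturbation-independent lower bound $\delta_E$. The only nontrivial ingredient is the Lipschitz continuity of $\sep$ established in Proposition~\ref{lipsep}.

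First I would invoke Proposition~\ref{lipsep} to get
\[
\bigl|\sep(A_{22}+E_{22},\, A_{11}+E_{11}) - \sep(A_{22},\, A_{11})\bigr| \;\leq\; \|E_{22}\| + \|E_{11}\|,
\]
which gives $s_E \geq \sep(A_{22}, A_{11}) - (\|E_{22}\| + \|E_{11}\|) = \delta_E$. Under the standing hypothesis $\delta_E > 0$ we conclude $s_E \geq \delta_E > 0$, and combined with the assumption $4\|A_{12}+E_{12}\|\,\|E_{21}\| < \delta_E^2 \leq s_E^2$, the hypotheses of Theorem~\ref{th:inv1} are fulfilled.

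Theorem~\ref{th:inv1} then furnishes a unique $X$ with $(A+E)\bmatrix{I \\ X} = \bmatrix{I \\ X} M$, where $M = (A_{11}+E_{11}) + (A_{12}+E_{12})X$, and
\[
\|X\| \;\leq\; \frac{2\|E_{21}\|}{s_E + \sqrt{s_E^2 - 4\|A_{12}+E_{12}\|\,\|E_{21}\|}}.
\]
To convert this to the stated bound, I observe that the function $t \mapsto t + \sqrt{t^2 - c}$ is monotonically increasing in $t$ for fixed $c \geq 0$ on the range where the square root is real. Since $s_E \geq \delta_E$ and $4\|A_{12}+E_{12}\|\,\|E_{21}\| < \delta_E^2 \leq s_E^2$, this monotonicity yields
\[
s_E + \sqrt{s_E^2 - 4\|A_{12}+E_{12}\|\,\|E_{21}\|} \;\geq\; \delta_E + \sqrt{\delta_E^2 - 4\|A_{12}+E_{12}\|\,\|E_{21}\|},
\]
so replacing the denominator gives the first inequality in (\ref{stewbd}). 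Dropping the nonnegative square-root term in the denominator produces the coarser bound $2\|E_{21}\|/\delta_E$.

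I do not anticipate a genuine obstacle here: the statement is essentially a reformulation of Theorem~\ref{th:inv1} with an a-priori-checkable hypothesis, and the only substantive step is the monotonicity argument for the denominator, which is elementary. Uniqueness of $X$ is inherited directly from Theorem~\ref{th:inv1}, since both theorems solve the same Riccati equation $\hat A_{22}X - X\hat A_{11} - X\hat A_{12}X + E_{21} = 0$ under hypotheses ensuring applicability of the Banach fixed point argument in Theorem~\ref{th:riccati}.
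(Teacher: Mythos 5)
Your proposal is correct and takes essentially the same approach as the paper: apply Proposition~\ref{lipsep} to get $\delta_E \leq s_E$, verify the hypotheses of Theorem~\ref{th:inv1}, then use the monotonicity of the denominator $x \mapsto x + \sqrt{x^2 - 4\|A_{12}+E_{12}\|\,\|E_{21}\|}$ (equivalently, that the resulting bound is decreasing in $x$) to pass from $s_E$ to $\delta_E$.
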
 

\vone \begin{proof} By Proposition~\ref{lipsep}, we have   $ \delta_E \leq s_E.$ Note that  $$ \|E_{11}\| + \|E_{22}\|  < s \Longrightarrow \delta_E > 0 \Longrightarrow s_E>0.$$ 	
	Since $4 \|A_{12}+E_{12}\| \, \|E_{21}\| < \delta_E^2 \leq s_E^2,$ by Theorem~\ref{th:inv1}, there exists a unique $X$ such that $(A+E)\left[ \begin{matrix} I \\  X  \end{matrix}\right] = \left[ \begin{matrix} I \\  X \end{matrix}\right] M$ and 
$$	\|X\| \leq \frac{2 \|E_{21}\|}{s_E + \sqrt{s_E^2- 4 \|A_{12}+E_{12}\|\, \|E_{21}\|}} \leq  \frac{2 \|E_{21}\|}{\delta_E + \sqrt{\delta_E^2- 4 \|A_{12}+E_{12}\|\, \|E_{21}\|}}.$$ The last inequality holds because $f(x) := 2 \|E_{21}\|/(x + \sqrt{x^2 - 4 \|A_{12}+E_{12}\|\, \|E_{21}\|})$ is monotonically decreasing for $ x^2 > 4 \|A_{12}+E_{12}\| \, \|E_{21}\|$. 	
\end{proof} 

\vone 

Theorem~\ref{th:inv1} holds when $\|E\|$ is sufficiently small. It is desirable to derive a similar bound when $\|E\|$ is as large as possible. A bound similar to that in Theorem~\ref{th:inv1} is derived in~\cite{karow} that involves $s$ instead of $s_E.$ We now derive the bound from a different perspective and provide a simplified proof. We proceed as follows.

\vone 
{\bf Pseudospectrum.} Let $\ep >0$ and $ A \in \C^{n \rtimes n}.$ Then the $\epsilon$-pseudospectrum $\eig_{\epsilon}(A)$ of $A$ is defined by  (see, \cite{alambora, alambora2, lntbook}) $$\eig_{\epsilon}(A) := \bigcup_{\|E\| \leq \ep}\{\eig(A+E) : E
\in \C^{n\times n} \}.$$ Note that $ \eig_0(A) = \eig(A).$ For the $2$-norm and the Frobenius norm, it is easy to see that  $\eig_{\epsilon}(A) = \{ z \in \C : \sigma_{\min}(A- zI) \leq \epsilon\} = \{  z \in \C : \|(A-zI)^{-1} \|_2 \geq 1/\ep\}\cup \eig(A),$ where $\sig_{\min}(A)$ denotes the smallest singular value of $A.$ Further, $\eig_{\ep}(A)$ has the following properties~(see, \cite{alambora, lntbook}). \\

\begin{itemize} \item[(a)]  The map $ \ep \longmapsto \eig_{\epsilon}(A)$ is monotonically increasing, that is,  $ \ep < \delta  \Longrightarrow \eig_{\epsilon}(A) \subset \eig_{\delta}(A).$  Further, $\eig_{\epsilon}(A)$ does not have isolated points for $\ep >0.$
\item[]	
\item[(b)] 	For $\ep >0,$ $\eig_{\epsilon}(A)$
	consists of at most $n$ components (i.e., maximal connected subsets) and each component
	contains at least one eigenvalue of $A$ in its interior. The boundary of $\eig_{\ep}(A)$ is embedded in an algebraic curve.
\end{itemize}

\vone  Let $A \in \C^{n\times n}$ and $B \in \C^{m\times m}.$ We have seen that $\sep(A, B)$ is a measure of separation of $A$ and $B$. We now consider another measure of separation through spectral overlap of $A$ and $B$.  For the rest of this section, we consider only the $2$-norm of matrices. 

\vone \begin{definition}  Let $ A \in \C^{n\times n} $ and $ B \in \C^{m\times m}.$ Define   $$\sep_{\lam}(A, B) := \mathrm{inf}\{ \ep \geq 0: \eig_{\epsilon}(A)\cap\eig_{\epsilon}(B) \neq
	\emptyset\}.$$ 
\end{definition} 

Note that  $ \ep < \sep_{\lam}(A, B) \Longrightarrow \eig_{\epsilon}(A) \cap \eig_{\epsilon}(B) = \emptyset \Longrightarrow \eig(A) \cap \eig(B) = \emptyset.$ The separation $\sep_{\lam}(A, B)$ is a measure of separation of $A$ and $B$  in the sense that $\sep_{\lam}(A, B)$ is the smallest value for which there exist $E \in \C^{n\times n}$ and $ F \in \C^{m\times m}$  such that $\max(\|E\|_2,  \|F\|_2) = \sep_{\lam}(A, B)$ and  $ \eig(A+E) \cap \eig(B+F) \neq \emptyset, $ that is, $\eig(A+E)$ and $\eig(B+F)$ overlap; see~\cite{alambora, alambora2, Dem}. \vone 

\vone \begin{remark}
It is easy to see that $\sep(A, B) \leq 2 \sep_{\lam}(A, B).$  Indeed, assume that $\sep(A, B) > 0.$ Let $E$ and $F$ be such that $\max(\|E\|_2,  \|F\|_2) = \sep_{\lam}(A, B)$ and $ \eig(A+E) \cap \eig(B+F) \neq \emptyset $. Then by Proposition~\ref{lipsep}, we have $$ \sep(A, B) - (\|E\|_2+\|F\|_2) \leq \sep(A+E, B+F) = 0 \Longrightarrow \sep(A, B) \leq 2 \sep_{\lam}(A, B).$$ Consequently, if $ \epsilon < \sep(A, B)/2 $ then $ \eig_{\ep}(A)\cap \eig_{\ep}(B) = \emptyset.$  

\end{remark}

\vone  The  pseudospectrum localization theorem  given below  plays an important role in deriving perturbation bounds for right invariant pairs. \vone 
\begin{theorem}\cite{large} \label{grammont}
Let $ A = Q\bmatrix{ A_1 &
	C
	\\ 0 & A_2}Q^*$, where $ Q$ is unitary.  Define $\phi(\ep) := \ep\sqrt{ 1+ \|C\|_2/\ep}.$ Then   
$\eig_{\ep}(A) \subset \eig_{\phi(\ep)}(A_1)\cup\eig_{\phi(\ep)}(A_2). $
\end{theorem}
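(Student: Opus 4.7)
The plan is to prove the contrapositive after a unitary reduction. Since the $\ep$-pseudospectrum is invariant under unitary similarity, I may assume directly that $A = \bmatrix{ A_1 & C \\ 0 & A_2}$, and it then suffices to show that if $z \notin \eig_{\phi(\ep)}(A_1) \cup \eig_{\phi(\ep)}(A_2)$, then $z \notin \eig_\ep(A)$.

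Fix such a $z$. The norm characterization of the pseudospectrum given in the Preliminaries yields $R_j := (zI - A_j)^{-1}$ with $\|R_j\|_2 < 1/\phi(\ep)$ for $j = 1, 2$; in particular $z \notin \eig(A)$, and block back-substitution gives
$$(zI - A)^{-1} = \bmatrix{ R_1 & R_1 C R_2 \\ 0 & R_2}.$$
The goal becomes $\|(zI-A)^{-1}\|_2 < 1/\ep$. For any unit column $\bmatrix{u \\ v}$ I would estimate
$$\left\|(zI-A)^{-1}\bmatrix{u\\ v}\right\|_2^2 = \|R_1(u + CR_2 v)\|_2^2 + \|R_2 v\|_2^2 \leq \|R_1\|_2^2\,\|u + CR_2 v\|_2^2 + \|R_2\|_2^2\,\|v\|_2^2,$$
and then expand $\|u + CR_2 v\|_2^2$ via the weighted triangle inequality $\|a+b\|^2 \leq (1+\mu)\|a\|^2 + (1+\mu^{-1})\|b\|^2$, valid for every $\mu>0$ by Cauchy--Schwarz combined with AM--GM.

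The central calculation---and the main obstacle---is to pick $\mu$ so that, after invoking $\|R_j\|_2 < 1/\phi(\ep)$ and collecting like powers of $\|u\|_2^2$ and $\|v\|_2^2$, the coefficients of both terms fall strictly below $1/\ep^2$. A short computation using $\phi(\ep)^2 = \ep^2 + \ep\,\|C\|_2$ shows that the two coefficient conditions reduce to $\mu \leq \|C\|_2/\ep$ and $\mu \geq \|C\|_2/\ep$, respectively, so $\mu = \|C\|_2/\ep$ is the unique balancing choice. With this $\mu$, both coefficients collapse to exactly $1/\ep^2$ in the boundary case $\|R_j\|_2 = 1/\phi(\ep)$; under the strict hypothesis, the maximum of the two coefficients is therefore strictly less than $1/\ep^2$. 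Since $\|(zI-A)^{-1}\bmatrix{u\\ v}\|_2^2$ is dominated by a convex combination of these coefficients, we conclude $\|(zI-A)^{-1}\|_2 < 1/\ep$, i.e.\ $z \notin \eig_\ep(A)$. The degenerate cases $\ep = 0$ and $\|C\|_2 = 0$ are handled separately: the former reduces to the inclusion $\eig(A) \subset \eig(A_1) \cup \eig(A_2)$, while in the latter $A$ is block diagonal and the bound is immediate (formally, by letting $\mu \to 0^+$).
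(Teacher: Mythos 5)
Your proof is correct. Note that the paper itself does not prove Theorem~\ref{grammont}; it cites Grammont--Largillier without argument, so there is no in-paper proof to compare against. Your route---reducing to the block upper triangular case, applying the resolvent characterization $\eig_\delta(B)^c = \{z : \sigma_{\min}(B-zI) > \delta\}$, writing out the block inverse, and running the weighted Cauchy--Schwarz estimate $\|a+b\|^2 \le (1+\mu)\|a\|^2 + (1+\mu^{-1})\|b\|^2$ with the balancing parameter $\mu = \|C\|_2/\ep$---is a clean, self-contained derivation. I verified the algebra: with $r := 1/\phi(\ep)$ and $\phi(\ep)^2 = \ep^2 + \ep\|C\|_2$, the coefficient of $\|u\|_2^2$ becomes $r^2(1+\mu) = 1/\ep^2$ and the coefficient of $\|v\|_2^2$ becomes $r^4(1+\mu^{-1})\|C\|_2^2 + r^2 = 1/\ep^2$ at the boundary, so strict hypotheses $\|R_j\|_2 < r$ do yield strict domination and hence $\|(zI-A)^{-1}\|_2 < 1/\ep$. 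Your handling of the degenerate cases $\|C\|_2 = 0$ and $\ep = 0$ is also appropriate. The argument reveals why $\phi(\ep) = \sqrt{\ep^2 + \ep\|C\|_2}$ is exactly the right scaling: it is the unique threshold for which the two block coefficients balance.
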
 

\vone 
The function $\phi$ is strictly increasing and it is easy to see that
\be \label{phi} \phi^{-1}(\ep) = 2 \ep^2/(\|C\|_2+\sqrt{\|C\|_2^2+4 \ep^2}).\ee This
shows that if $\ep < \phi^{-1}(\sep_{\lam}(A_1, A_2))$ then each
component of $\eig_{\ep}(A)$ either intersects $\eig(A_1)$ or
$\eig(A_2)$ but not both.  Since $\sep(A_1, A_2) \leq 2 \sep_{\lam}(A_1,A_2)$, it follows that $  \ep < \phi^{-1}(\sep(A_1, A_2)/2) \Longrightarrow$ each
component of $\eig_{\ep}(A)$ either intersects $\eig(A_1)$ or
$\eig(A_2)$ but not both. \von

A contour $\Gamma$ in $\C$  is called a Cauchy contour if its the oriented boundary of a bounded Cauchy domain~\cite[p.6]{ggk}.

\begin{theorem}\label{proj} Let $ \Delta_1, \ldots, \Delta_m$ be  connected components of $\eig_{\ep}(A)$. Define $\Delta := \cup^m_{j=1}\Delta_j$ and $\Delta_e := \eig_{\ep}(A)\setminus \Delta.$ Then $\Delta$ and $ \Delta_e$ are disjoint compact sets and $\eig_{\ep}(A)	= \Delta \cup \Delta_e.$ 
	Further, there exits a Cauchy contour  $\Gamma$  such that  $\Delta \subset \mathrm{Int}(\Gamma)$ and $ \Delta_e \subset \mathrm{Ext}(\Gamma).$  Let $E \in \C^{n\times n}$ be such that $\|E\|_2 = \ep.$  Then $ \Gamma \subset \rho(A+tE)$ for all $ t \in \mathbb{D} :=\{ t \in \C : |t| \leq 1\}.$ 
Consider the spectral projections $$ P := \frac{-1}{2\pi i}\int_{\Gamma} (A-zI)^{-1} dz \text{ and } P(t) := \frac{-1}{2 \pi i} \int_\Gamma (A+tE -zI)^{-1} dz \text{ for } t \in \mathbb{D}.$$ Then $P(t)$ is analytic in $\mathbb{D}$ and $ \rank(P(t)) = \rank(P)$ for all $ t \in \mathbb{D}.$  Set $$A_Y(t) := (A+tE)_{|R(P(t))} \text{ and } A_Z(t) := (A+tE)_{|N(P(t))}.$$  Then   we have $\eig(A+tE) = \eig(A_Y(t))\cup\eig(A_Z(t)),$
$$\eig(A_Y(t))=\eig(A+tE) \cap \mathrm{Int}(\Gamma) \subset \Delta \text{ and } \eig(A_Z(t))= \eig(A+tE) \cap \mathrm{Ext}(\Gamma) \subset \Delta_e$$ for all $t \in \mathbb{D}$ and $E \in \C^{n\times n}$ with $\|E\|_2 = \ep$.
In particular, $\Delta$ contains exactly the same number of eigenvalues (counting multiplicity) of $A$ and $A+tE$ for all $ t \in \mathbb{D}.$ Further, if $\{\lam(t), \mu(t)\} \subset \eig(A+tE)$ with $\lam(0) \in \Delta$ and $ \mu(0) \in  \Delta_e$ then   $\lam(t) \in \Delta $ and $ \mu(t) \in \Delta_e$ for all $ t \in \mathbb{D}$ and $E \in \C^{n\times n}$ with $\|E\|_2 = \ep.$.
\end{theorem}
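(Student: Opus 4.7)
The plan is to proceed in the natural order of the claims: (1) establish the topological separation of $\Delta$ and $\Delta_e$ and produce a separating contour $\Gamma$; (2) verify $\Gamma \subset \rho(A+tE)$ for every $t\in\mathbb{D}$; (3) invoke the spectral decomposition theorem together with Proposition~\ref{contp} to get analyticity and constant rank of $P(t)$; (4) read off the spectral localization for $A_Y(t),A_Z(t)$; and (5) deduce the trajectory statement by a connectedness argument.

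First I would note that each connected component of $\eig_{\ep}(A)$ is a clopen subset of the compact set $\eig_{\ep}(A)$, hence compact, and components are pairwise disjoint. Therefore $\Delta$ and $\Delta_e$ are each finite unions of such components, giving disjoint compact sets with $\eig_{\ep}(A)=\Delta\cup\Delta_e$. Because $\Delta$ and $\Delta_e$ are disjoint compact subsets of $\C$, a standard construction (thicken $\Delta$ by a small enough open neighborhood whose closure misses $\Delta_e$, and smooth its boundary) produces a Cauchy contour $\Gamma$ with $\Delta\subset\mathrm{Int}(\Gamma)$ and $\Delta_e\subset\mathrm{Ext}(\Gamma)$.

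Next, for any $E$ with $\|E\|_2=\ep$ and any $t\in\mathbb{D}$, we have $\|tE\|_2\le\ep$, so $\eig(A+tE)\subset\eig_{\ep}(A)=\Delta\cup\Delta_e\subset\mathrm{Int}(\Gamma)\cup\mathrm{Ext}(\Gamma)$; hence $\Gamma\cap\eig(A+tE)=\emptyset$, i.e.\ $\Gamma\subset\rho(A+tE)$. Then the resolvent $z\mapsto (A+tE-zI)^{-1}$ is jointly continuous on $\Gamma\times\mathbb{D}$ and holomorphic in $t$ for each fixed $z$, so $P(t)$ is well defined and analytic on $\mathbb{D}$. By the spectral decomposition Theorem~\ref{spd} applied to $A+tE$ with contour $\Gamma$, each $P(t)$ is a projection satisfying $(A+tE)P(t)=P(t)(A+tE)$, and $\eig(A_Y(t))=\eig(A+tE)\cap\mathrm{Int}(\Gamma)$, $\eig(A_Z(t))=\eig(A+tE)\cap\mathrm{Ext}(\Gamma)$. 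Because $\mathbb{D}$ is connected and $P(t)$ is a continuous family of projections, Proposition~\ref{contp} forces $\rank(P(t))=\rank(P(0))=\rank(P)$ for all $t\in\mathbb{D}$.

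Combining these facts gives the claimed decomposition of $\eig(A+tE)$, and the inclusions $\eig(A_Y(t))\subset\Delta$, $\eig(A_Z(t))\subset\Delta_e$ follow from $\eig(A+tE)\subset\Delta\cup\Delta_e$ together with $\Delta\subset\mathrm{Int}(\Gamma)$, $\Delta_e\subset\mathrm{Ext}(\Gamma)$. The constancy of $\rank(P(t))$ exactly says that $\Delta$ contains the same total algebraic multiplicity of eigenvalues of $A+tE$ for all $t\in\mathbb{D}$. For the last assertion, I would interpret $\lambda(t),\mu(t)$ as continuous eigenvalue trajectories (which exist because $A+tE$ depends analytically on $t$ and eigenvalues vary continuously); since $\lambda(t)\in\eig(A+tE)\subset\Delta\cup\Delta_e$ for every $t$, the map $t\mapsto\lambda(t)$ is a continuous function from the connected set $\mathbb{D}$ into the disjoint union of the closed sets $\Delta$ and $\Delta_e$, so its image lies entirely in one of them, and $\lambda(0)\in\Delta$ forces $\lambda(t)\in\Delta$ throughout; the same reasoning handles $\mu(t)$.

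The main obstacle I anticipate is verifying that $P(t)$ really has constant rank in a clean way: the chain of reasoning depends on knowing that $\Gamma$ avoids $\eig(A+tE)$ uniformly in $t\in\mathbb{D}$, which is why the uniform pseudospectral bound $\|tE\|_2\le\ep$ (hence $\eig(A+tE)\subset\eig_{\ep}(A)$) is so critical at the outset; everything else then slots into Theorem~\ref{spd} and Proposition~\ref{contp}. The secondary subtlety is the trajectory statement, where one must be careful that eigenvalues are being tracked continuously rather than merely pointwise, but the connectedness-of-image argument above takes care of it.
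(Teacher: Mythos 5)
Your proposal is correct and follows essentially the same route as the paper's proof: disjoint compact decomposition and a separating Cauchy contour, the key uniform bound $\eig(A+tE)\subset\eig_\ep(A)$ for $|t|\le 1$ giving $\Gamma\subset\rho(A+tE)$, analyticity of $P(t)$ followed by Proposition~\ref{contp} for constant rank, and Theorem~\ref{spd} for the spectral localization. The one place you are more explicit than the paper is the final trajectory claim: the paper merely asserts that $\lambda(t)\in\eig(A_Y(t))$ and $\mu(t)\in\eig(A_Z(t))$ for all $t$, whereas you supply the underlying connectedness argument (a continuous map from the connected set $\mathbb{D}$ into the disjoint union of the closed sets $\Delta$ and $\Delta_e$ must have image in one of them), which is a welcome bit of completeness; conversely, the paper is more explicit than you at the analyticity step, using the bound $\|R(z)\|_2<1/\ep$ on $\Gamma$ to write out the Neumann series for $R(t,z)$ and hence the power series of $P(t)$, while you simply invoke joint continuity and holomorphy in $t$, which is equally valid but less self-contained.
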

\von
\begin{proof} Obviously $\Delta$ and $\Delta_e$ are compact and disjoint and that $\eig_{\ep}(A) = \Delta\cup\Delta_e.$ 	Since $\Delta$ is compact, there exists a Cauchy contour $\Gamma$ such that $\Delta \subset \mathrm{Int}(\Gamma)$ and $ \Delta_e \subset \mathrm{Ext}(\Gamma),$ see~\cite[p.6]{ggk}.

	Evidently, $ \eig(A+tE) \subset \eig_{\ep}(A)$ for $ t \in \mathbb{D}$ and $ E\in \C^{n\times n}$ with $\|E\|_2 = \ep.$   Hence  $ \Gamma \subset \rho(A+tE)$ and that  $\eig(A+tE) \cap \mathrm{Int}(\Gamma) \subset \Delta\; \text{ and }\; \eig(A+tE) \cap \mathrm{Ext}(\Gamma) \subset \Delta_e$ for $t \in \mathbb{D}$ and $E \in \C^{n\times n}$ with $\|E\|_2 = \ep.$ \von

	Set $R(z) := (A-zI)^{-1}$ and $ R(t,z) := (A+tE-zI)^{-1}.$ Note that $ \|R(z)\|_2 < 1/\ep$ for $z \in \Gamma.$ Hence for $ (t, z) \in \mathbb{D}\times \Gamma$,  we have $ \|tER(z)\|_2  \leq |t| \|E\|_2 \|R(z)\|_2< 1$ which shows that
	$$ R(t,z) = R(z) ( I + tE R(z))^{-1} = R(z)\sum^\infty_{k=0} (-ER(z))^k  t^k.$$ Hence $ t \mapsto R(t, z)$ is analytic in $\mathbb{D}$ for each $z \in \Gamma$ and the series expansion converges absolutely and uniformly as $\mathbb{D}\times \Gamma$ is compact. Consequently $$ t \mapsto P(t) =  P + \frac{(-1)^{n+1}}{2\pi i} \sum^\infty_{k=1}\left( \int_{\Gamma} R(z)(ER(z))^k dz\right) t^k $$ is analytic in $\mathbb{D}.$ Observe that $P(0) = P.$ 	Since $P(t)$ is continuous on $\mathbb{D},$ by Proposition~\ref{contp}, we have $ \rank(P(t)) = \rank(P)$ for all $ t \in \mathbb{D}.$

\von	
 Finally, by Theorem~\ref{spd}, we have $ \eig(A_Y(t)) = \eig(A+tE) \cap\mathrm{Int}(\Gamma) \subset \Delta$ and $ \eig(A_Z(t)) = \eig(A+tE) \cap\mathrm{Ext}(\Gamma) \subset \Delta_e$ and that 
	the total algebraic multiplicity of the eigenvalues of $A+tE$  in $\Delta$ is equal to $\rank(P(t))= \rank(P)$ for all $ t \in \mathbb{D}.$ Note that $\lam(t) \in \eig(A_Y(t))$ and $ \mu(t) \in \eig(A_Z(t))$ for all $ t \in \mathbb{D}.$ Hence $\lam(t) \in \Delta$ and $\mu(t) \in \Delta_e$ for all $ t \in \mathbb{D}.$ This completes the proof.	
\end{proof} 

\von \begin{remark} \label{eigpath} Theorem~\ref{proj} shows that if $ \lam(t)$ is an eigenvalue of $A+tE$ then either $ \lam(t) \in \Delta$ for all $ t \in \mathbb{D}$ or  $ \lam(t) \in \Delta_e$ for all $ t \in \mathbb{D}.$ Hence the eigenvalue path $\lam(t)$ cannot crossover from $\Delta$ to $\Delta_e$ and vice-versa. Further, if $\rank(P) = \ell$ then  $\eig(A+tE)\cap \Delta $ contains exactly $\ell$ eigenvalues (counting multiplicity) for all $ t \in \mathbb{D}.$ In other words, the $\ell$ eigenvalues of $A+tE$ in $\Delta$ evolved from the $\ell$ eigenvalues of $A$ in $\Delta$ for all $ t \in \mathbb{D}.$\end{remark}

\von
We now prove an important result which yields perturbation bounds for right and left invariant pairs of a matrix; see~\cite[Theorem~3.1]{karow}. We provide a simplified proof.

\vone \begin{theorem} \label{inclusion}  Let $ A := \left[ \begin{matrix} A_{11} & A_{12} \\ 0 & A_{22}   \end{matrix}\right]$ and $E :=\left[ \begin{matrix} E_{11} & E_{12} \\ E_{21} & E_{22}   \end{matrix}\right]$ be $n\times n$ matrices,  where $A_{11}$ and $ E_{11}$ are $m\times m$ matrices.    Suppose $ s := \sep(A_{22}, \, A_{11})> 0$ and  $ 4 \|E\|_2 (\|E\|_2+\|A_{12}\|_2) < s^2.$ Define   $\phi(\ep) := \sqrt{\ep ( \ep + \|A_{12}\|_2)}$ for $\ep \geq 0.$  Then the following hold.
\begin{itemize} \item[(a)] There exists a unique matrix $X \in \C^{(n-m)\times m}$ such that $$\left[ \begin{matrix} I_m &  0 \\ X & I_{n-m}  \end{matrix}\right]^{-1} \left[ \begin{matrix} A_{11}+ E_{11}&  A_{12}+E_{12} \\ E_{21} &  A_{22} +E_{22}  \end{matrix}\right] \left[ \begin{matrix} I_m &  0 \\ X & I_{n-m}  \end{matrix}\right] =  \left[ \begin{matrix}  M &  A_{12} +E_{12}\\ 0 &  N  \end{matrix}\right],
	$$
	where $M:= A_{11}+E_{11}+  (A_{12}+E_{12}) X$ and $N :=A_{22}+E_{22}- X  (A_{12}+E_{12}).$ Further, 
	$$ \|X\|_2 \leq \frac{2 \|E\|_2}{s + \sqrt{s^2- 4 (\|A_{12}\|_2+ \|E\|_2) \|E\|_2}} \leq \frac{2 \|E\|_2}{s}.$$
	 
	 \item[(b)] $\eig_{\phi(\|E\|_2)}(A_{11})\cap \eig_{\phi(\|E\|_2)}(A_{22}) = \emptyset$. 
	
		\item[(c)]   $ \eig( M) \subset \eig_{\phi(\|E\|_2)}(A_{11}) \mbox{  and }   \eig(N ) \subset \eig_{\phi(\|E\|_2)}(A_{22}).$
		
	\end{itemize} 	
	\end{theorem}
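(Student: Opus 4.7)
My plan is to prove (b) first, use the disjointness it provides to construct a contour separating two pieces of the $\|E\|_2$-pseudospectrum of $A$, and then obtain (a) and (c) simultaneously from spectral-projection arguments on that contour, rather than attacking the Riccati equation directly.

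For (b), the hypothesis $4\|E\|_2(\|E\|_2+\|A_{12}\|_2)<s^2$ rewrites as $\phi(\|E\|_2)<s/2$, and the remark preceding Theorem~\ref{grammont} gives $s=\sep(A_{11},A_{22})\le 2\sep_{\lam}(A_{11},A_{22})$. Hence $\phi(\|E\|_2)<\sep_{\lam}(A_{11},A_{22})$, so by the definition of $\sep_{\lam}$ the pseudospectra $\eig_{\phi(\|E\|_2)}(A_{11})$ and $\eig_{\phi(\|E\|_2)}(A_{22})$ are disjoint.

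For (a) and (c), apply Theorem~\ref{grammont} to the block upper triangular $A$ at $\ep=\|E\|_2$ to get $\eig_{\|E\|_2}(A)\subset\eig_{\phi(\|E\|_2)}(A_{11})\cup\eig_{\phi(\|E\|_2)}(A_{22})$, a disjoint union by (b). Partition the components of $\eig_{\|E\|_2}(A)$ accordingly into compact pieces $\Delta_1,\Delta_2$ with $\eig(A_{jj})\subset\Delta_j$, and pick a Cauchy contour $\Gamma$ enclosing $\Delta_1$ and avoiding $\Delta_2$. By Theorem~\ref{proj}, $\Gamma\subset\rho(A+tE)$ for every $t\in\mathbb{D}$, the spectral projection $P(t):=-\frac{1}{2\pi i}\int_\Gamma(A+tE-zI)^{-1}dz$ is analytic on $\mathbb{D}$ with constant rank $m$, and every eigenvalue path $\lam(t)$ of $A+tE$ with $\lam(0)\in\eig(A_{11})$ remains in $\Delta_1$ throughout. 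To extract $X$ for (a), I use $R(P(0))=\span\bmatrix{I_m\\0}$, then a continuity/transversality argument on $\mathbb{D}$ allowing me to normalize $R(P(t))=\span\bmatrix{I_m\\X(t)}$ with $X$ continuous and $X(0)=0$; setting $X:=X(1)$, the invariance $(A+E)R(P(1))\subset R(P(1))$ gives $(A+E)\bmatrix{I_m\\X}=\bmatrix{I_m\\X}M$, which by block multiplication is equivalent both to the Riccati equation $\hat A_{22}X-X\hat A_{11}-X\hat A_{12}X+E_{21}=0$ (with $\hat A_{ij}:=A_{ij}+E_{ij}$) and to the block triangularization identity in the statement. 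Part (c) then falls out: $\eig(M)$ consists of the eigenvalues of $A+E$ inside $\Gamma$, hence $\eig(M)\subset\Delta_1\subset\eig_{\phi(\|E\|_2)}(A_{11})$, and analogously $\eig(N)\subset\eig_{\phi(\|E\|_2)}(A_{22})$.

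The hard part is the sharp $\|X\|_2$ estimate in (a). A direct Riccati fixed-point argument via Theorem~\ref{th:riccati} yields the analogous bound with $s_E=\sep(\hat A_{22},\hat A_{11})$ in the denominator, and Proposition~\ref{lipsep} only gives $s_E\ge s-2\|E\|_2$ --- the wrong direction. The claimed denominator $s+\sqrt{s^2-4(\|A_{12}\|_2+\|E\|_2)\|E\|_2}$ --- the smaller root of $(\|A_{12}\|_2+\|E\|_2)x^2-sx+\|E\|_2=0$ --- requires running the fixed-point iteration along the contour $\Gamma$ from (b), so that the $\|E_{11}\|_2$ and $\|E_{22}\|_2$ contributions are absorbed into the resolvent bound $\|(A-zI)^{-1}\|_2\le 1/\|E\|_2$ on $\Gamma$ (a consequence of Theorem~\ref{grammont}) rather than being subtracted from $s$. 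The transversality condition ensuring that $X(t)$ is defined for all $t\in\mathbb{D}$ is a secondary technical point, dispatched via Proposition~\ref{angle}: failure of transversality would force $\tan\theta_{\max}(R(P(t)),\span\bmatrix{I_m\\0})=\infty$, contradicting the continuity and boundedness of $P(t)$ on the compact set $\mathbb{D}$.
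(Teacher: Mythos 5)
Your outline for parts (b) and (c) matches the paper (same $\phi(\|E\|_2)<s/2\le\sep_\lambda$ argument, same use of Theorems~\ref{grammont} and~\ref{proj} to confine $\eig(M)$ and $\eig(N)$ to disjoint pseudospectral components), but for part (a) there are two genuine gaps.

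First, the transversality argument fails. You claim that if for some $t\in\mathbb{D}$ the spectral subspace $R(P(t))$ were to become non-transversal to $\span\bmatrix{0\\ I_{n-m}}$, then $\tan\theta_{\max}\bigl(R(P(t)),\span\bmatrix{I_m\\ 0}\bigr)=\infty$, contradicting boundedness of $P(t)$. That is not a contradiction: a spectral projection $P(t)$ can remain uniformly bounded in norm while its range rotates right into $\span\bmatrix{0\\ I_{n-m}}$ (the tangent of the canonical angle to $\span\bmatrix{I_m\\ 0}$ is controlled by $\|X(t)\|_2$, not by $\|P(t)\|_2$). Proposition~\ref{angle} gives no a priori control of $\|X(t)\|_2$ in terms of $\|P(t)\|_2$, so your argument that $X(t)$ exists for all $t\in\mathbb{D}$ does not close. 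In fact the only way to obtain a bound on $\|X\|_2$ is to prove one directly, which leads to the second gap.

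Second, the sharp estimate $\|X\|_2\le 2\|E\|_2/\bigl(s+\sqrt{s^2-4(\|A_{12}\|_2+\|E\|_2)\|E\|_2}\bigr)$ is not proved; the sentence ``running the fixed-point iteration along the contour $\Gamma$ from (b), so that the $\|E_{11}\|_2$ and $\|E_{22}\|_2$ contributions are absorbed into the resolvent bound'' is a description of a hope, not an argument. You correctly observe that a naive application of Theorem~\ref{th:riccati} to the perturbed blocks $\hat A_{11},\hat A_{22}$ produces $s_E$ rather than $s$ and that Proposition~\ref{lipsep} only bounds $s_E$ from below, which is the wrong direction. The paper's way around this is purely algebraic and does not involve the contour at all: rewrite the Riccati equation $\mathbf{R}(X)=0$ as a fixed-point equation $X=\mathbf{G}(X)$ with $\mathbf{G}(X):=\mathbf{T}^{-1}\bigl(XA_{12}X-\bmatrix{-X & I}E\bmatrix{I\\ X}\bigr)$, crucially keeping the \emph{unperturbed} Sylvester operator $\mathbf{T}(X)=A_{22}X-XA_{11}$ on the left, so its inverse contributes $1/s$ rather than $1/s_E$. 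The entire perturbation $E$ now appears sandwiched as $\bmatrix{-X & I}E\bmatrix{I\\ X}$, which is bounded by $(1+\|X\|_2^2)\|E\|_2$ since $\|\bmatrix{-X & I}\|_2=\|\bmatrix{I\\ X}\|_2=\sqrt{1+\|X\|_2^2}$; the individual blocks $E_{11},E_{22}$ never need to be peeled off. This gives $\|\mathbf{G}(X)\|_2\le\psi(\|X\|_2)$ with $\psi(x)=\bigl(x^2\|A_{12}\|_2+(1+x^2)\|E\|_2\bigr)/s$, whose smallest fixed point is exactly the claimed bound $p$, and Brouwer's theorem applied to $\mathbf{G}$ on $B[0,p]$ yields existence with $\|X\|_2\le p$. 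Uniqueness then follows from (b) and (c) exactly as in the paper. Without this reformulation, your proposal has no mechanism to produce the bound at all.
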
 

\begin{proof}  Consider the Riccati operator  $ \mathbf{R}(X) := \left[\begin{matrix} -X & I \end{matrix} \right] (A+E) \left[\begin{matrix} I \\ X \end{matrix} \right].$ Then by Theorem~\ref{lem:ricati}, (a) holds  if and only if the Riccati equation  $ \mathbf{R}(X) = 0$ 	has a unique solution.  Now, we have   \beano \mathbf{R}(X) &=& A_{22}X - X A_{11} + \left[\begin{matrix} -X & I \end{matrix} \right] E \left[\begin{matrix} I \\ X \end{matrix} \right] - X A_{12}X \\ &=& \mathbf{T}(X) + \left[\begin{matrix} -X & I \end{matrix} \right] E \left[\begin{matrix} I \\ X \end{matrix} \right] - X A_{12}X,\eeano where $\mathbf{T}(X) := A_{22}X - X A_{11}$ is a Sylvester operator. This shows that \be \label{fixmap} \mathbf{R}(X) = 0 \iff X = \mathbf{T}^{-1}\left( XA_{12}X - \left[\begin{matrix} -X & I \end{matrix} \right] E \left[\begin{matrix} I \\ X \end{matrix} \right]\right) =: \mathbf{G}(X).\ee In other words, $\mathbf{R}(X) = 0 $ if and only if $ X$ is a fixed point of $ \mathbf{G}(X).$
	
	Next, we show that $\mathbf{G}(X)$ has a unique fixed point. Define $ \psi :[0, \, \infty) \rar \R$ by $ \psi(x) := (x^2\|A_{12}\|_2 + (1+x^2) \|E\|_2)/s.$  Note that $\psi$ is a strictly increasing function and has two fixed points. Indeed,  $\psi(x) = x \iff  q(x) := (\|A\|_2+\|E\|_2) x^2 - s x + \|E\|_2 =0.$ The quadratic $q(x)$ has two real roots because $ s^2 - 4 \|E\|_2 (\|E\|_2+\|A_{12}\|_2) >0.$ The smallest root of $q(x)$ is the smallest fixed point of $\psi(x)$ and is given by $$ p := \frac{2 \|E\|_2}{s + \sqrt{s^2- 4 (\|A_{12}\|_2+ \|E\|_2) \|E\|_2}} \leq \frac{2 \|E\|_2}{s}.$$ Now we have  $\|\mathbf{G}(X)\|_2 \leq \psi(\|X\|_2) \leq \psi(p) = p$ whenever $ \|X\|_2 \leq p.$ This shows that $ X \in B[0, p] \Longrightarrow \mathbf{G}(X) \in B[0, p],$ where $ B[0, p] := \{ X \in \C^{(n-m)\times m} : \|X\|_2 \leq p\}.$
	Hence by Brouwer's fixed theorem, $\mathbf{G}(X)$ has a fixed point in $B[0, p],$ that is, there a matrix $ X \in \C^{(n-m)\times m} $ such that $ \mathbf{G}(X) = X$ and $\|X\|_2 \leq p. $ This proves (a) except for the uniqueness of $X$.
	
The uniqueness of $X$ follows from (b) and (c). Indeed, assume that (b) and (c) hold.  If possible, suppose that 
 $\mathbf{G}$ has two fixed points $X$ and $Y$.    Then writing  $$ XBX- YBY = X B(X-Y) + (X-Y)BY,$$ we have \beano 0 &=& \mathbf{R}(X) - \mathbf{R}(Y) = \left[\begin{matrix} -X & I \end{matrix} \right] (A+E) \left[\begin{matrix} I \\ X \end{matrix} \right] - \left[\begin{matrix} -Y & I \end{matrix} \right] (A+E) \left[\begin{matrix} I \\ Y \end{matrix} \right] \\ &= & \left[\begin{matrix} -X & I \end{matrix} \right] (A+E) \left[\begin{matrix} 0 \\ X -Y \end{matrix} \right] - \left[\begin{matrix} X-Y & 0 \end{matrix} \right] (A+E) \left[\begin{matrix} I \\ Y \end{matrix} \right] \\ &=&  N (X-Y) - (X-Y) M. \eeano
By (c) we have  $\eig(N) \subset \eig_{\phi(\|E\|)}(A_{22})$ and $ \eig(M) \subset \eig_{\phi(\|E\|)}(A_{11})$. By (b)  $N$ and $M$ have disjoint spectra. Hence  $ N(X-Y) - (X-Y) M = 0 \Longrightarrow X - Y =0.$  This proves uniqueness of $X.$

Now we prove (b).  Suppose that $\mathbf{G}(X) = X.$ Then by (a) and Theorem~\ref{grammont}, we have  $\eig(A+E) = \eig( M)\cup  \eig(N )$
and  $$\eig(A+E) \subset \eig_{\|E\|_2}(A) \subset \eig_{\phi(\|E\|_2)}(A_{11})\cup \eig_{\phi(\|E\|_2)}(A_{22}).$$  Since $\phi(\|E\|_2) < s/2 \leq \sep_{\lambda}(A_{11}, A_{22}),$ we have $\eig_{\phi(\|E\|_2)}(A_{11})\cap \eig_{\phi(\|E\|_2)}(A_{22}) = \emptyset.$ This proves (b). 

Finally,   set $\ep := \|E\|_2$ and consider  $ \Delta_1 := \eig_{\ep}(A) \cap \eig_{\phi(\ep)}(A_{11})$ and $ \Delta_2 := \eig_{\ep}(A) \cap \eig_{\phi(\ep)}(A_{22})$. Then $\Delta_1$ and $\Delta_2$ are disjoint compact sets such that $ \eig_{\ep}(A) = \Delta_1 \cup \Delta_2$ and $\Delta_i \subset \eig_{\phi(\ep)}(A_{ii})$ for $ i=1,2.$
 Note that $\eig(A_{11}) = \eig(A)\cap\Delta_1$ and $\eig(A_{22}) = \eig(A) \cap \Delta_2.$ Also note that $ \eig(M)\cup\eig(N) = \eig(A+E) \subset \eig_{\ep}(A) =\Delta_1\cup\Delta_2$. Since  $M \rightarrow A_{11}$ and $N \rightarrow A_{22}$ as $ \|E\|_2 \rightarrow 0$,   by Theorem~\ref{proj} and Remark~\ref{eigpath}, we have $\eig(M) \subset \Delta_1 \subset \eig_{\phi(\ep)}(A_{11})$ and $ \eig(N) \subset \Delta_2 \subset \eig_{\phi(\ep)}(A_{22}).$ This proves (c).\end{proof}

\vone 
The condition  $ 4 \|E\|_2 (\|E\|_2+\|A_{12}\|_2) < s^2$ in Theorem~\ref{inclusion} can be restated as $$ \|E\|_2 < \phi^{-1}(s/2) =  \frac{s^2}{2(\|A_{12}\|_2+\sqrt{\|A_{12}\|_2^2+ s^2})}.$$ This ensures that $\eig_{\phi(\|E\|_2)}(A_{11})$ and  $\eig_{\phi(\|E\|_2)}(A_{22})$ are disjoint which is utilized in the proof of Theorem~\ref{inclusion}.  For simplicity, now denote $ \sep_{\lam}(A_{22}, A_{11}) $ by $\sep_{\lam}$. Then  $ s/2 \leq \sep_{\lam}.$  Also, if $ \|E\|_2 < \phi^{-1}(\sep_{\lam})$ then 
$\eig_{\phi(\|E\|_2)}(A_{11})$ and $\eig_{\phi(\|E\|)}(A_{22})$ are again disjoint, see the discussion after (\ref{phi}). This raises the following question whose solution is not known.

\vone \noindent
{\bf Problem:}  Does Theorem~\ref{inclusion} still hold if the condition $\|E\|_2 < \phi^{-1}(s/2)$ is replaced with $\|E\|_2 < \phi^{-1}(\sep_{\lam})$ and $s$ is replaced with $2 \sep_{\lam}$ in the bound on $\|X\|_2?$

\vone  As a consequence of the proof of Theorem~\ref{inclusion}, we have the following result. \vone 
\begin{corollary}\label{cor:riccati}Let $ A := \left[ \begin{matrix} A_{11} & A_{12} \\ 0 & A_{22}   \end{matrix}\right]$ and $E :=\left[ \begin{matrix} E_{11} & E_{12} \\ E_{21} & E_{22}   \end{matrix}\right]$ be $n\times n$ matrices,  where $A_{11}$ and $ E_{11}$ are $m\times m$ matrices.    Consider the Riccati operator
	$$ \mathbf{R} : \C^{(n-m)\times m} \longrightarrow \C^{(n-m)\times m}, \,\, X \longmapsto  \left[\begin{matrix} -X & I \end{matrix} \right] (A+E) \left[\begin{matrix} I \\ X \end{matrix} \right].$$ Suppose that $s := \sep(A_{22}, \, A_{11}) >0$ and that $ 4 (\|A_{12}\|_2 + \|E\|_2) \, \|E\|_2 < s^2.$ Then the Riccati equation $\mathbf{R}(X) = 0$ has a unique solution  $ X \in  \C^{(n-m)\times m}$ such that 
	$$\|X\|_2 \leq  \frac{2 \|E\|_2}{s + \sqrt{s^2- 4 (\|A_{12}\|_2+ \|E\|_2) \|E\|_2}} \leq \frac{2 \|E\|_2}{s}.$$
\end{corollary}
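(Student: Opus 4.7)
The plan is to deduce the corollary directly from Theorem~\ref{inclusion}(a), since the hypotheses match exactly. The first step is to unfold the definition of $\mathbf{R}$. Using $A_{21}=0$, a direct computation gives
$$
\mathbf{R}(X) = \bmatrix{-X & I}(A+E)\bmatrix{I \\ X} = (A_{22}+E_{22})X - X(A_{11}+E_{11}) - X(A_{12}+E_{12})X + E_{21},
$$
which is precisely the Riccati equation attached by Theorem~\ref{lem:ricati} to the matrix $A+E$ in its natural $m\times m / (n-m)\times(n-m)$ block partition. By that theorem, solutions of $\mathbf{R}(X)=0$ are in one-to-one correspondence with the unit block-triangular similarity transformations that produce the structure asserted in Theorem~\ref{inclusion}(a).

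Under the stated hypotheses $s=\sep(A_{22},A_{11})>0$ and $4(\|A_{12}\|_2+\|E\|_2)\|E\|_2<s^2$, part~(a) of Theorem~\ref{inclusion} furnishes a unique such $X$ with the exact bound claimed, so the corollary is immediate. I emphasize that the apparently heavier parts~(b)--(c) of Theorem~\ref{inclusion}, which describe the spectra of the diagonal blocks via pseudospectra, are not needed for the corollary itself; they were invoked in the proof of Theorem~\ref{inclusion} only to close the uniqueness step, and that step has already been performed.

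For a self-contained argument (in case one wishes not to cite Theorem~\ref{inclusion}), I would reproduce the fixed-point construction used there. Since $s>0$, the Sylvester operator $\mathbf{T}(X):=A_{22}X-XA_{11}$ is invertible, so $\mathbf{R}(X)=0$ is equivalent to $X=\mathbf{G}(X)$ with
$$
\mathbf{G}(X) := \mathbf{T}^{-1}\!\left(XA_{12}X - \bmatrix{-X & I}E\bmatrix{I \\ X}\right).
$$
The scalar majorant $\psi(x)=(x^2\|A_{12}\|_2+(1+x^2)\|E\|_2)/s$ satisfies $\|\mathbf{G}(X)\|_2\le\psi(\|X\|_2)$, and the smallness assumption forces the discriminant of $\psi(x)=x$ to be positive, yielding a smallest fixed point $p$ equal to the bound in the corollary. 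Then $\mathbf{G}$ carries the closed ball $B[0,p]$ into itself, and Brouwer's fixed-point theorem yields existence. The only delicate step is uniqueness, which does not come from a contraction estimate on $B[0,p]$ but rather from the pseudospectral argument already used in Theorem~\ref{inclusion}: two solutions $X,Y$ would satisfy $N(X-Y)-(X-Y)M=0$ for block matrices $M,N$ whose spectra lie in disjoint pseudospectral sets, forcing $X=Y$. This is the only genuinely non-routine ingredient, and it is fully available from the preceding material.
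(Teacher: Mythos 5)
Your proof is correct and follows exactly the paper's route: the paper presents this corollary as an immediate consequence of (the proof of) Theorem~\ref{inclusion}, and you correctly observe that $\mathbf{R}(X)=\bmatrix{-X & I}(A+E)\bmatrix{I \\ X}$ expands, using $A_{21}=0$, to the Riccati equation treated there, so part~(a) of Theorem~\ref{inclusion} delivers existence, uniqueness, and the bound. Your optional self-contained version reproduces the paper's fixed-point construction verbatim, including the correct and important observation that uniqueness does not come from a contraction estimate on $B[0,p]$ but from the pseudospectral disjointness argument of parts~(b)--(c).
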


\vone
Finally, by Theorem~\ref{inclusion}, we have the following perturbation bounds  for right and left invariant pairs of a matrix; see also \cite[Theorem~3.1]{karow}.

\vone \begin{theorem} \label{th:main}  Let $ A := \left[ \begin{matrix} A_{11} & A_{12} \\ 0 & A_{22}   \end{matrix}\right]$ and $E :=\left[ \begin{matrix} E_{11} & E_{12} \\ E_{21} & E_{22}   \end{matrix}\right]$ be $n\times n$ matrices,  where $A_{11}$ and $ E_{11}$ are $m\times m$ matrices.    Suppose $ s := \sep(A_{22}, \, A_{11})> 0$ and  $ 4 \|E\|_2 (\|E\|_2+\|A_{12}\|_2) < s^2.$   Then the following hold.
	\begin{itemize} \item[(a)] There exists a unique matrix $X \in \C^{(n-m)\times m}$ such that $\left( \left[ \begin{matrix} I_m \\  X\end{matrix}\right], \, M\right)$ and  $\left(\left[ \begin{matrix} -X^*  \\ I_{n-m}\end{matrix}\right], \, N \right)$ are  right and left invariant pairs of $A +E, $ respectively, where $M:= A_{11}+E_{11}+  (A_{12}+E_{12}) X$ and $N :=A_{22}+E_{22}- X  (A_{12}+E_{12}).$
		\item[(b)] Let $ \mathcal{X} := \span\left( \left[ \begin{matrix} I_m \\  0\end{matrix}\right]\right)$ and $\mathcal{X}_E := \span\left( \left[ \begin{matrix} I_m \\  X\end{matrix}\right]\right)$. Then we have $ A\mathcal{X} \subset \mathcal{X}$ and $(A+E) \mathcal{X}_E \subset \mathcal{X}_E$. Further,    $$\tan \theta_{max}(\mathcal{X}, \mathcal{X}_E) =\|X\|_2 \leq  \frac{2 \|E\|_2}{s + \sqrt{s^2- 4 (\|A_{12}\|_2+ \|E\|_2) \|E\|_2}} \leq \frac{2 \|E\|_2}{s}.$$ 
	\item[(c)] Let $ \mathcal{Y} := \span\left( \left[ \begin{matrix} 0 \\  I_{n-m}\end{matrix}\right]\right)$ and $\mathcal{Y}_E := \span\left( \left[ \begin{matrix} -X^* \\  I_{n-m}\end{matrix}\right]\right)$. Then  $A^* \mathcal{Y} \subset \mathcal{Y}$ and $(A+E)^* \mathcal{Y}_E \subset \mathcal{Y}_E$. Further, 
	 $$\tan \theta_{max}(\mathcal{Y}, \mathcal{Y}_E)  = \|X\|_2 \leq  \frac{2 \|E\|_2}{s + \sqrt{s^2- 4 (\|A_{12}\|_2+ \|E\|_2) \|E\|_2}} \leq \frac{2 \|E\|_2}{s}.$$
	\end{itemize} 	
\end{theorem}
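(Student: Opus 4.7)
The plan is to assemble Theorem~\ref{th:main} as a direct consequence of three earlier results that already encode all the substantive work: Corollary~\ref{cor:riccati} (existence/uniqueness of the Riccati solution with the quantitative bound), Theorem~\ref{lem:ricati} (conversion of a Riccati solution into right and left invariant pairs of a block matrix), and Proposition~\ref{angle} (identification of $\|X\|_2$ with $\tan\theta_{\max}$). The Riccati equation is the common thread.

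First, under the hypotheses $s := \sep(A_{22}, A_{11}) > 0$ and $4\|E\|_2(\|E\|_2 + \|A_{12}\|_2) < s^2$, Corollary~\ref{cor:riccati} applied to $A+E$ yields a unique solution $X \in \C^{(n-m)\times m}$ of the Riccati equation $\mathbf{R}(X) = 0$ associated with $A+E$, together with the bound
$$\|X\|_2 \leq \frac{2\|E\|_2}{s + \sqrt{s^2 - 4(\|A_{12}\|_2 + \|E\|_2)\|E\|_2}} \leq \frac{2\|E\|_2}{s}.$$
This is precisely the bound appearing on the right-hand side of parts (b) and (c), so only the identification of $\|X\|_2$ with the tangents of the canonical angles remains.

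Second, to obtain part (a), I would feed this $X$ into Theorem~\ref{lem:ricati} applied to the block $2\times 2$ matrix $A+E$ (with blocks $A_{11}+E_{11}$, $A_{12}+E_{12}$, $E_{21}$, $A_{22}+E_{22}$). Theorem~\ref{lem:ricati} immediately converts the vanishing of $\mathbf{R}(X)$ into the statement that $\bigl(\bmatrix{I_m \\ X}, M\bigr)$ is a right invariant pair and $\bigl(\bmatrix{-X^* \\ I_{n-m}}, N\bigr)$ is a left invariant pair of $A+E$, with $M$ and $N$ as in the statement.

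Third, for part (b) the inclusion $A\mathcal{X} \subset \mathcal{X}$ is immediate from the block upper triangular structure of $A$, while $(A+E)\mathcal{X}_E \subset \mathcal{X}_E$ is exactly the right-invariance just established. Proposition~\ref{angle}, applied with $Z = X$, then gives $\tan\theta_{\max}(\mathcal{X}, \mathcal{X}_E) = \|X\|_2$, and combining with the bound from Step~1 closes (b). Part (c) is obtained by the analogous computation for $A^*$: the block lower triangular structure of $A^*$ gives $A^*\mathcal{Y} \subset \mathcal{Y}$, left-invariance from (a) gives $(A+E)^*\mathcal{Y}_E \subset \mathcal{Y}_E$, and Proposition~\ref{angle} applied after swapping the roles of the top and bottom blocks yields $\tan\theta_{\max}(\mathcal{Y}, \mathcal{Y}_E) = \|X^*\|_2 = \|X\|_2$.

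The only genuine obstacle is bookkeeping: one should confirm that the uniqueness in Corollary~\ref{cor:riccati} is global (not just inside the contraction ball $B[0,p]$), but this is exactly what was proved inside Theorem~\ref{inclusion} using the spectral disjointness of $M$ and $N$ coming from parts (b) and (c) of that theorem, so no additional argument is needed here.
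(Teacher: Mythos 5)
Your proposal is correct and follows essentially the same route as the paper: the paper's proof invokes Theorem~\ref{inclusion}(a) directly to produce the block upper triangularization with the unique $X$ and the quantitative bound, then appeals to Proposition~\ref{angle} for the identification $\tan\theta_{\max}=\|X\|_2$ in both (b) and (c). Your version merely re-factors the same ingredients by citing Corollary~\ref{cor:riccati} together with Theorem~\ref{lem:ricati} instead of Theorem~\ref{inclusion}(a), and you correctly note that the global uniqueness in Corollary~\ref{cor:riccati} is already secured inside the proof of Theorem~\ref{inclusion} via the spectral-disjointness argument, so no gap remains.
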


\begin{proof} By Theorem~\ref{inclusion}, there exists a unique $X \in \C^{(n-m)\times n}$ such that 
$$\left[ \begin{matrix} I_m &  0 \\ X & I_{n-m}  \end{matrix}\right]^{-1} \left[ \begin{matrix} A_{11}+ E_{11}&  A_{12}+E_{12} \\ E_{21} &  A_{22} +E_{22}  \end{matrix}\right] \left[ \begin{matrix} I_m &  0 \\ X & I_{n-m}  \end{matrix}\right] =  \left[ \begin{matrix}  M &  A_{12} +E_{12}\\ 0 &  N  \end{matrix}\right]
$$ and $$ \|X\|_2 \leq  \frac{2 \|E\|_2}{s + \sqrt{s^2- 4 (\|A_{12}\|_2+ \|E\|_2) \|E\|_2}} \leq \frac{2 \|E\|_2}{s},$$ where $M:= A_{11}+E_{11}+  (A_{12}+E_{12}) X$ and $N :=A_{22}+E_{22}- X  (A_{12}+E_{12}).$

By Proposition~\ref{angle}, we have $ \tan \theta_{max}(\mathcal{X}, \mathcal{X}_E) = \tan \theta_{max}(\mathcal{Y}, \mathcal{Y}_E)  = \|X\|_2.$ Hence (b) and (c) follow. 
\end{proof}

\vone 
\begin{remark} For $\mathcal{X}$ and $\mathcal{X}_E$ in Theorem~\ref{th:main}, $\theta_{\max}(\mathcal{X}, \mathcal{X}_E)$ is an acute angle and hence $\sin\theta_{\max}(\mathcal{X}, \mathcal{X}_E)  \leq \tan \theta_{max}(\mathcal{X}, \mathcal{X}_E).$
 Consequently, we have 
$$\sin\theta_{\max}(\mathcal{X}, \mathcal{X}_E) \leq  \tan \theta_{max}(\mathcal{X}, \mathcal{X}_E) \leq  \frac{2 \|E\|_2}{s + \sqrt{s^2- 4 (\|A_{12}\|_2+ \|E\|_2) \|E\|_2}} \leq \frac{2 \|E\|_2}{s}.$$ 
\end{remark}

A different bound for a right invariant pair of $A$ is derived in \cite[Lemma~7.8]{Dem} by reducing $A$ to block diagonal form; see also \cite[Theorem~3.6]{karow} . We now prove a similar but sharper bound  for a right invariant pair of $A$ by reducing $A$ to block diagonal form $\diag(A_{11}, A_{22}).$ Let $A$ and $E$ be as in  Theorem~\ref{inclusion}.  Let $U \in \C^{n\times n}$ be a nonsingular matrix such that $ U^{-1} AU = \diag(A_{11}, \; A_{22}).$ 
Then $ U^{-1}(A+E) U= \diag(A_{11}, \; A_{22}) + U^{-1}EU$ and $ \|U^{-1} EU\|_2 \leq \|U^{-1}\|_2\|U\|_2 \|E\|_2.$  So, it is important to choose $U$ such that $\|U^{-1}\|_2\|U\|_2$ has the smallest value. It is proved in~\cite{Dem, Dem83} that 
$$ \min\{ \|U^{-1}\|_2 \|U\|_2 :  U^{-1} AU = \diag(A_{11},\; A_{22}) \text{ and } \rank(U) = n\} = \|P\|_2 + \sqrt{ \|P\|_2^2-1}, $$  where $ P :=\bmatrix{ I_m & - R\\ 0 & 0 }$ is the spectral projection of $A$ corresponding to $\eig(A_{11})$ and 
$ A_{11} R- RA_{22} = -A_{12}.$  Set $ p := \|P\|_2 = \sqrt{1 + \|R\|_2^2}$ and consider   $ S := \bmatrix{ I_m & R/p\\ 0 & I_{n-m}/p}$. Then 
  we have   $$S^{-1} =  \bmatrix{ I_m & -R\\ 0 & pI_{n-m}}, S^{-1} AS = \diag(A_{11}, \; A_{22})  \text{ and } \|S\|_2 \|S^{-1}\|_2 = p + \sqrt{p^2 -1},$$ see~\cite{Dem, Dem83}. In other words, $S$ is a block diagonalizing similarity transformation of $A$ that has the smallest condition number $\cond(S) := \|S^{-1}\|_2 \|S\|_2.$ 


%
%

\vone We are now ready to prove the following result which improves the bound in  \cite[Lemma~7.8]{Dem} especially in the case when $ p:= \|P\|_2$ is small. Also, compare this result with \cite[Theorem~3.6]{karow}.

\vone \begin{theorem} \label{demmel}  Let $ A := \left[ \begin{matrix} A_{11} & A_{12} \\ 0 & A_{22}   \end{matrix}\right]$ and $E :=\left[ \begin{matrix} E_{11} & E_{12} \\ E_{21} & E_{22}   \end{matrix}\right]$ be $n\times n$ matrices,  where $A_{11}$ and $ E_{11}$ are $m\times m$ matrices.  Let $S$ and $p$ be as above and let $\phi(\ep) := (p+\sqrt{p^2-1})\epsilon.$   Suppose that  $$ s := \sep(A_{22}, \, A_{11})> 0 \text{ and  }  \|E\|_2  < \frac{s}{2(p + \sqrt{p^2 -1})} = \phi^{-1}(s/2).$$   Then  there exists a unique $X \in \C^{(n-m)\times m}$ such that 
	\be \label{blkeq}\left[ \begin{matrix} I_m &  0 \\ X & I_{n-m}   \end{matrix}\right]^{-1}  S^{-1}\left[ \begin{matrix} A_{11}+ E_{11}&  A_{12}+E_{12} \\ E_{21} &  A_{22} +E_{22}  \end{matrix}\right] S \left[ \begin{matrix} I_m &  0 \\ X & I_{n-m}  \end{matrix}\right] =  \left[ \begin{matrix}  M &  \widehat{E}_{12}\\ &  N  \end{matrix}\right].
	\ee  Here $M:= A_{11}+ \widehat{E}_{11}+  \widehat{E}_{12} X$ and $N :=A_{22}+\widehat{E}_{22}- X  \widehat{E}_{12},$ where $$ \bmatrix{ \widehat{E}_{11} \\ \widehat{E}_{21}} = \bmatrix{ E_{11} -R E_{21} \\ pE_{21}} \text{ and } \bmatrix{ \widehat{E}_{12} \\ \widehat{E}_{22}} = \bmatrix{ \displaystyle{\frac{1}{p}} \big[ E_{11}R + E_{12} - R( E_{21} R + E_{22})\big] \\ E_{21} R+ E_{22} }.$$  	Further, $ \displaystyle{\|X\|_2 \leq  \frac{2 (p + \sqrt{p^2 -1}) \|E\|_2}{s}.}$ Furthermore, the following assertions hold.
	
	\begin{itemize}
		\item[(a)]  $\eig_{\phi(\|E\|_2)}(A_{11})\cap \eig_{\phi(\|E\|_2)}(A_{22}) = \emptyset$.
		\item[(b)]	  $ \eig( M) \subset \eig_{\phi(\|E\|_2)}(A_{11}) \mbox{  and }   \eig(N ) \subset \eig_{\phi(\|E\|_2)}(A_{22}).$
	     \item[(c)]  $\left( S\left[ \begin{matrix} I_m \\  X\end{matrix}\right], \, M\right)$ and  $\left((S^{-1})^*\left[  \begin{matrix} -X^*  \\ I_{n-m}\end{matrix}\right], \, N \right)$ are  right and left invariant pairs of $A +E, $ respectively. 
		\item[(d)] Let $ \mathcal{X} :=  \span\left( \left[ \begin{matrix} I_m \\  0\end{matrix}\right]\right)$ and $\mathcal{X}_E := \span\left( S \left[ \begin{matrix} I_m \\  X\end{matrix}\right]\right)$. Then we have $ A\mathcal{X} \subset \mathcal{X}$ and $(A+E) \mathcal{X}_E \subset \mathcal{X}_E$. Further,    $$\tan \theta_{max}(\mathcal{X}, \mathcal{X}_E)   \leq \frac{2 (p+\sqrt{p^2-1})^2 \|E\|_2}{s}.$$ 
		\item[(e)] Let $ \mathcal{Y} :=  \span\left(\left[ \begin{matrix} 0 \\  I_{n-m}\end{matrix}\right]\right)$ and $\mathcal{Y}_E := \span\left( (S^{-1})^*\left[ \begin{matrix} -X^* \\  I_{n-m}\end{matrix}\right]\right)$. Then  we have $A^* \mathcal{Y} \subset \mathcal{Y}$ and $(A+E)^* \mathcal{Y}_E \subset \mathcal{Y}_E$. Further, 
		$$\tan \theta_{max}(\mathcal{Y}, \mathcal{Y}_E)   \leq  \frac{2 (p+\sqrt{p^2-1})^2 \|E\|_2}{s}.$$
	\end{itemize} 	
\end{theorem}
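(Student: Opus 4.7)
The plan is to reduce the problem to the block-diagonal case using the optimal similarity $S$ and then invoke Theorem~\ref{inclusion}. Direct multiplication gives $S^{-1}AS = \diag(A_{11}, A_{22})$, using the Sylvester identity $A_{11}R - RA_{22} = -A_{12}$, and also yields the four blocks of $\widehat{E} := S^{-1}ES$ in exactly the form stated. The crucial norm estimate is $\|\widehat{E}\|_2 \le \|S\|_2 \|S^{-1}\|_2 \|E\|_2 = (p+\sqrt{p^2-1})\|E\|_2 = \phi(\|E\|_2)$, and the hypothesis $\|E\|_2 < \phi^{-1}(s/2)$ immediately gives $\|\widehat{E}\|_2 < s/2$. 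Since the off-diagonal block of $\diag(A_{11}, A_{22})$ is zero, this is precisely the hypothesis of Theorem~\ref{inclusion} applied to $\diag(A_{11}, A_{22}) + \widehat{E}$, which produces a unique $X$ satisfying~(\ref{blkeq}) together with $\|X\|_2 \le 2\|\widehat{E}\|_2/s \le 2(p+\sqrt{p^2-1})\|E\|_2/s$.

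Items (a), (b), and (c) then follow with little extra work. For (a), the remark preceding~(\ref{phi}) gives $s/2 \le \sep_{\lam}(A_{11}, A_{22})$, so $\phi(\|E\|_2) < s/2$ forces the two $\phi(\|E\|_2)$-pseudospectra to be disjoint. For (b), apply Theorem~\ref{inclusion}(c) to the block-diagonalized problem: since its off-diagonal block is zero, its internal $\phi$-function is the identity, giving $\eig(M) \subset \eig_{\|\widehat{E}\|_2}(A_{11})$ and $\eig(N) \subset \eig_{\|\widehat{E}\|_2}(A_{22})$; monotonicity of the pseudospectrum together with $\|\widehat{E}\|_2 \le \phi(\|E\|_2)$ completes the argument. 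Part (c) is a direct translation: $\left(\bmatrix{I_m\\X},\,M\right)$ is a right invariant pair of $S^{-1}(A+E)S$, so pulling back through $S$ and $(S^{-1})^*$ produces the stated right and left invariant pairs of $A+E$.

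The real work lies in (d) and (e). For (d), compute $S\bmatrix{I_m\\X} = \bmatrix{I_m + RX/p\\X/p}$; since the bound on $\|X\|_2$ together with $\|R\|_2 = \sqrt{p^2-1}$ makes $I_m+RX/p$ invertible, right-multiplying by $(I_m+RX/p)^{-1}$ reparametrizes $\mathcal{X}_E = \span\!\left(\bmatrix{I_m\\Y}\right)$ with $Y = (X/p)(I_m + RX/p)^{-1}$. Proposition~\ref{angle} then gives $\tan\theta_{\max}(\mathcal{X}, \mathcal{X}_E) = \|Y\|_2 \le \|X\|_2/(p - \sqrt{p^2-1}\,\|X\|_2)$. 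The main obstacle is the algebraic step of bounding this by $(p+\sqrt{p^2-1})\|X\|_2$: this reduces to $(p+\sqrt{p^2-1})(p - \sqrt{p^2-1}\,\|X\|_2) \ge 1$, which via the identity $(p+\sqrt{p^2-1})(p-\sqrt{p^2-1}) = 1$ is equivalent to $\|X\|_2 \le 1$. The hypothesis $\|E\|_2 < \phi^{-1}(s/2)$ delivers exactly $\|X\|_2 < 1$, closing the loop. Item (e) is entirely parallel: $(S^{-1})^*\bmatrix{-X^*\\I_{n-m}} = \bmatrix{-X^*\\R^*X^*+pI_{n-m}}$, and the same manipulation (using the block-swapped version of Proposition~\ref{angle}) produces the identical bound for $\tan\theta_{\max}(\mathcal{Y}, \mathcal{Y}_E)$.
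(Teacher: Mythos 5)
Your proposal is correct and follows essentially the same route as the paper: conjugate by the optimal block-diagonalizing similarity $S$, bound $\|\widehat{E}\|_2 = \|S^{-1}ES\|_2 \le (p+\sqrt{p^2-1})\|E\|_2$ to put yourself in the hypotheses of Theorem~\ref{inclusion}, and then pull the resulting invariant pair back through $S$ for (d)--(e), reparametrizing $S\bmatrix{I_m\\X}$ by factoring out the invertible block $I_m + RX/p$. Your clean articulation of (b) via the fact that the internal $\phi$-function of Theorem~\ref{inclusion} collapses to the identity when the off-diagonal block is zero, and the slick rewriting of the bound $\|Z\|_2 \le (p+\sqrt{p^2-1})\|X\|_2$ via the identity $(p+\sqrt{p^2-1})(p-\sqrt{p^2-1})=1$ together with $\|X\|_2<1$, are presentational polishings of the paper's computations rather than a different argument.
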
 

\vone 
\begin{proof}  
Define $ \widehat{E} := S^{-1} ES.$  Then $\|\widehat{E}\|_2  \leq  (p + \sqrt{p^2 -1})  \|E\|_2 $  and 
\be \label{diagm} S^{-1}(A+E)S = S^{-1} AS + S^{-1}ES=\bmatrix{ A_{11} & 0 \\ 0 & A_{22}} + \bmatrix{ \widehat{E}_{11} & \widehat{E}_{12} \\ \widehat{E}_{21} & \widehat{E}_{22}},\ee  where $ \bmatrix{ \widehat{E}_{11} \\ \widehat{E}_{21}} = \bmatrix{ E_{11} -R E_{21} \\ pE_{21}}$ and $\bmatrix{ \widehat{E}_{12} \\ \widehat{E}_{22}} = \bmatrix{ \frac{1}{p} ( E_{11}R + E_{12} - R( E_{21} R + E_{22})) \\ E_{21} R+ E_{22} }.$

\von	
	
	Note that $ \|E\|_2 < \phi^{-1}(s/2) \Longrightarrow \|\widehat{E} \|_2 < s/2.$ Hence the condition of Theorem~\ref{inclusion} is satisfied thereby proving the existence of $X, M$ and $N$  as well as the bound on $\|X\|_2.$
	
\von 	
	 Since $ \|S^{-1} ES\|_2  \leq  (p + \sqrt{p^2 -1}) \|E\|_2$ and  $A+E = S \diag(A_{11}, A_{22}) S^{-1} + E,$  it follows that 
	 $ \eig_{\ep} (A) \subset \eig_{\phi(\ep)}(A_{11}) \cup \eig_{\phi(\ep)}(A_{22}).$   Now, if $\ep < \phi^{-1}(s/2)$ then  $\phi(\ep) < s/2 $ which implies that $\eig_{\phi(\ep)}(A_{11}) \cap \eig_{\phi(\ep)}(A_{22}) = \emptyset.$ Since $\|E\|_2 < \phi^{-1}(s/2),$ the assertion in (a) follows.
	 
	 As $ \eig(M)\cup \eig(N) = \eig(A+E) \subset \eig_{\|E\|_2}(A) \subset \eig_{\phi(\|E\|_2)}(A_{11}) \cup \eig_{\phi(\|E\|_2)}(A_{22}),$ the assertion in (b) follows from Theorems~\ref{inclusion} (b). 
	 
	\von  
	The assertion in (c) is immediate from (\ref{blkeq}). We now prove (d).  Obviously,  we have $ A\mathcal{X} \subset \mathcal{X}$ and $(A+E) \mathcal{X}_E \subset \mathcal{X}_E$.
	
	Note that  $ S\left[ \begin{matrix} I_m \\  0\end{matrix}\right] = \left[ \begin{matrix} I_m \\  0\end{matrix}\right]$. Hence 
	 $ \mathcal{X} := \span\left( S\left[ \begin{matrix} I_m \\  0\end{matrix}\right]\right) = \span\left( \left[ \begin{matrix} I_m \\  0\end{matrix}\right]\right). $ Next, we have 
	$ U := S \left[ \begin{matrix} I_m \\  X\end{matrix}\right] =  \left[ \begin{matrix} I_m + RX/p \\  X/p\end{matrix}\right].$ Since $ p^2 = 1+ \|R\|_2^2$ we have $$ \|R\|_2\|X\|_2/p \leq  2 \sqrt{p^2-1} (p+ \sqrt{p^2-1}) \|E\|_2/{ps} < 2(p+ \sqrt{p^2-1}) \|E\|_2/{s} < 1 $$ which shows that $(I_m +RX/p)$ is invertible. Set $ Z :=  X  (I_m + RX/p)^{-1} /p.$ Then  $$ U =  \left[ \begin{matrix} I_m + RX/p \\  X/p\end{matrix}\right] =  \left[ \begin{matrix} I_m  \\  X  (I_m + RX/p)^{-1} /p\end{matrix}\right]( I_m + RX/p ) = \left[ \begin{matrix} I_m \\  Z\end{matrix}\right]( I_m + RX/p )$$ shows that $\mathcal{X}_E = \span(U) = \span\left( \left[ \begin{matrix} I_m \\  Z\end{matrix}\right]\right).$  Hence $\tan \theta_{max}(\mathcal{X}, \mathcal{X}_E) =\|Z\|_2.$
	
Since $\|X\|_2 \leq 2 (p+\sqrt{p^2-1}) \|E\|_2/s < 1$ and $ p^2 = 1+\|R\|_2^2,$   we have 
 $$ \|Z\|_2 \leq \frac{\|X\|_2}{p( 1- \|R\|_2 \|X\|_2/p)} \leq \frac{\|X\|_2}{( p- \|R\|_2 )} =  \frac{\|X\|_2}{( p- \sqrt{p^2-1} )}  =\|X\|_2( p+ \sqrt{p^2-1} )$$ which shows that $\|Z\|_2 \leq 2 ( p+ \sqrt{p^2-1} )^2 \|E\|_2/s.$  This proves (d).
 
 \vone 
 Finally, since  $ (I_{n-m} + XR/p)^{-1}X = X(I_m + RX/p)^{-1} = pZ,$ we have 
 \beano  \bmatrix{ -X & I_{n-m}} S^{-1} &=& \bmatrix{ -X & XR + p I_{n-m}} \\ & =& p(I_{n-m} +XR/p) \bmatrix{ -X(I_m + RX/p)^{-1}/p &  I_{n-m}} \\  & =&  p(I_{n-m} +XR/p) \bmatrix{ - Z &  I_{n-m}}. \eeano 
 Hence  	$ V := (S^{-1})^* \left[ \begin{matrix} -X^* \\  I_{n-m}\end{matrix}\right] =  p \left[ \begin{matrix}  -Z^* \\ I_{n-m}  \end{matrix}\right] (I_{n-m} +XR/p)^*$ shows that $$ \mathcal{Y}_E = \span(V) = \span\left( \left[ \begin{matrix}  -Z^* \\ I_{n-m}  \end{matrix}\right]\right)$$ which in turn shows that $ \tan \theta_{max}(\mathcal{Y}, \mathcal{Y}_E) = \|Z\|_2.$ This completes the proof. 
 \end{proof}

\vone 
We end this section with an asymptotic perturbation  bounds for left and right invariant subspaces which can be derived as a consequence of the implicit function theorem.  See~\cite[Lemma~2.3, Corollary~2.4]{karow}.

\vone \begin{theorem}Let $ A := \left[ \begin{matrix} A_{11} & A_{12} \\ 0 & A_{22}   \end{matrix}\right]$ and $E :=\left[ \begin{matrix} E_{11} & E_{12} \\ E_{21} & E_{22}   \end{matrix}\right]$ be $n\times n$ matrices, where $A_{11} $ and $ E_{11}$ are $m\times m$ matrices.   Suppose $ \sep(A_{22}, \, A_{11})> 0$. Then for sufficiently small $\|E\|_2$ there exists $X_E \in \C^{(n-m)\times m}$ such that  $$ \mathcal{X}_E := \span\left( \left[ \begin{matrix} I_m \\  X_E\end{matrix}\right]\right) \text{ and } \mathcal{Y}_E := \span\left( \left[ \begin{matrix} -X_E^* \\  I_{n-m}\end{matrix}\right]\right)$$   are right  and left invariant subspaces of $A+E$, respectively. Further, we have 
\beano \tan \theta_{max}(\mathcal{X}, \mathcal{X}_E) & = & \|X_E\|_2 \leq \frac{ \|E_{21}\|_2}{\sep(A_{22}, A_{11})}  + {\mathcal O}(\|E\|_2^2), \\  \tan \theta_{max}(\mathcal{Y}, \mathcal{Y}_E) &= & \|X_E\|_2 \leq \frac{ \|E_{21}\|_2}{\sep(A_{22}, A_{11})}  + {\mathcal O}(\|E\|_2^2), \eeano where  $ \mathcal{X} := \span\left( \left[ \begin{matrix} I_m \\  0\end{matrix}\right]\right)$ and $ \mathcal{Y} := \span\left( \left[ \begin{matrix} 0 \\  I_{n-m}\end{matrix}\right]\right).$  

\end{theorem}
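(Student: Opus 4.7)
The plan is to set up the Riccati operator equation associated with block triangularization of $A+E$ (Theorem~\ref{lem:ricati}) and invoke the implicit function theorem. Define
\[
\mathbf{R} : \C^{(n-m)\times m} \times \C^{n\times n} \longrightarrow \C^{(n-m)\times m}, \quad \mathbf{R}(X, E) := (A_{22}+E_{22})X - X(A_{11}+E_{11}) - X(A_{12}+E_{12})X + E_{21}.
\]
Then $\mathbf{R}(0, 0) = 0$, and $\mathbf{R}$ is polynomial (hence analytic) in $(X, E)$. Its partial Fr\'echet derivative with respect to $X$ at $(0, 0)$ is the Sylvester operator $\mathbf{T} : Y \longmapsto A_{22} Y - Y A_{11}$, which is a linear isomorphism on $\C^{(n-m)\times m}$ with $\|\mathbf{T}^{-1}\|_2 = 1/\sep(A_{22}, A_{11})$ since $\sep(A_{22}, A_{11}) > 0$.

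By the implicit function theorem, there exist a neighborhood $U$ of $0 \in \C^{n\times n}$ and a unique smooth map $E \longmapsto X_E$ from $U$ to $\C^{(n-m)\times m}$ satisfying $X_0 = 0$ and $\mathbf{R}(X_E, E) = 0$ on $U$. By Theorem~\ref{lem:ricati}, this is precisely the statement that $\mathcal{X}_E = \span\left(\bmatrix{I_m \\ X_E}\right)$ and $\mathcal{Y}_E = \span\left(\bmatrix{-X_E^* \\ I_{n-m}}\right)$ are right and left invariant subspaces of $A + E$, respectively, for every $E \in U$.

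For the asymptotic bound I would expand to first order. Smoothness of $E \longmapsto X_E$ at $E = 0$ together with $X_0 = 0$ force $\|X_E\|_2 = \mathcal{O}(\|E\|_2)$. Rewriting $\mathbf{R}(X_E, E) = 0$ as
\[
\mathbf{T}(X_E) = -E_{21} - E_{22} X_E + X_E E_{11} + X_E (A_{12} + E_{12}) X_E,
\]
each of the last three terms on the right is $\mathcal{O}(\|E\|_2^2)$. Applying $\mathbf{T}^{-1}$ and taking norms yields
\[
X_E = -\mathbf{T}^{-1}(E_{21}) + \mathcal{O}(\|E\|_2^2), \qquad \|X_E\|_2 \leq \frac{\|E_{21}\|_2}{\sep(A_{22}, A_{11})} + \mathcal{O}(\|E\|_2^2).
\]
Proposition~\ref{angle} gives $\tan\theta_{\max}(\mathcal{X}, \mathcal{X}_E) = \|X_E\|_2$ directly, and the same proposition applied after swapping the two coordinate blocks (so that the roles of $m$ and $n-m$ are interchanged and $Z = -X_E^*$) yields $\tan\theta_{\max}(\mathcal{Y}, \mathcal{Y}_E) = \|X_E^*\|_2 = \|X_E\|_2$.

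I do not anticipate a serious obstacle: in this finite-dimensional matrix setting the implicit function theorem is elementary, and the $\mathcal{O}(\|E\|_2^2)$ bookkeeping is immediate once $X_E = \mathcal{O}(\|E\|_2)$ is known. An alternative, non-IFT route would be to Taylor expand the explicit Stewart-type bound of Theorem~\ref{th:inv1}, using Proposition~\ref{lipsep} to replace $s_E = \sep(A_{22}+E_{22}, A_{11}+E_{11})$ by $\sep(A_{22}, A_{11}) + \mathcal{O}(\|E\|_2)$, but the implicit function approach is cleaner and makes transparent why only $E_{21}$ survives in the leading term.
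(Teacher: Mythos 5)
Your proof is correct and follows essentially the same route as the paper: set up the Riccati operator equation via Theorem~\ref{lem:ricati}, invoke the implicit function theorem using invertibility of the Sylvester operator $\mathbf{T}:Y\longmapsto A_{22}Y-YA_{11}$, and extract the first-order term $-\mathbf{T}^{-1}(E_{21})$ together with Proposition~\ref{angle} to convert $\|X_E\|_2$ into the tangent of the canonical angle. The paper reads off the linear coefficient from the IFT formula $g'(A)E=-(\partial_X\mathbf{R})^{-1}\partial_E\mathbf{R}\,E$, whereas you first establish $X_E=\mathcal{O}(\|E\|_2)$ from smoothness and bootstrap by rearranging $\mathbf{R}(X_E,E)=0$; both are equivalent bookkeeping of the same expansion.
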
 
\vone 
\begin{proof} Define   $\mathbf{R}(A+E, X) := A_{22}X - X A_{11} + \left[\begin{matrix} -X & I \end{matrix} \right] E \left[\begin{matrix} I \\ X \end{matrix} \right] - X A_{12}X$ for  $ E \in \C^{n\times n}$ and $ X \in \C^{(n-m)\times m}. $ 
	 By Theorem~\ref{lem:ricati},  $\left( \left[ \begin{matrix} I_m \\  X\end{matrix}\right], M\right) $ is a right invariant pair of $A+E$  for some $M \in \C^{m\times m}$ if and only if 
  $\mathbf{R}(A+E, X) = 0$. Note that $ \left( \left[ \begin{matrix} I_m \\  0\end{matrix}\right], A_{11}\right)$ is an invariant pair of $A$ which corresponds to $ E= 0$ and $X =0.$

  Obviously, $\mathbf{R}$ is a holomorphic function of $E$ and $X.$ The derivative of $ \mathbf{R}(A+E, X)$ with respect to $ X$ at $(A, 0)$ is given by  $\partial_X\mathbf{R}(A, 0) = \mathbf{T},$ where $$ \mathbf{T}: X \longmapsto A_{22}X - XA_{11}$$ is the Sylvester operator which, in this case,  is invertible.  Hence by the implicit function theorem, for sufficiently small $\|E\|_2$ there exists $X_E$ such that $ \mathbf{R}(A+E, X_E) = 0$ and $ X_E =  g(A+E)$ for some function $g$ holomorphic in a neighbourhood of $A.$   Now  the first order expansion $ X_E = g(A) + g'(A)E + \mathcal{O}(\|E\|_2^2)$ together  with the fact that $ g(A) = 0 $ and $$ g'(A)E =  - (\partial_X\mathbf{R}(A, 0))^{-1}\partial_E\mathbf{R}(A, 0)E = - \mathbf{T}^{-1}(E_{21})$$ yields 
  $ X_E = - \mathbf{T}^{-1}( E_{21}) + \mathcal{O}( \|E\|_2^2), $ where $g'(A)$ is the derivative of $g$ at $A.$ Since $\sep(A_{22}, A_{11}) = 1/{\|\mathbf{T}^{-1}\|},$ we have $$ \|X_E\|_2 \leq \|E_{21}\|_2/{\sep(A_{22}, A_{11})} + \mathcal{O}(\|E\|_2^2).$$  Hence  the desired results follow. 
\end{proof}

\vone \begin{remark}  A matrix norm $\|\cdot\|$ is called unitarily invariant if $\| UXV\| = \|X\|$ for all matrices $U,V$ and  $X$ of compatible size with $U$ and $V$ being  unitary. We mention that the results presented above also hold for any unitarily invariant  matrix norm $\|\cdot\|$ such that $\|X\|_2 \leq \|X\|$ for all matrices $X$. The condition $\|X\|_2 \leq \|X\|$ ensures that $\|\cdot\|$ is sub-multiplicative, that is, $\|XY\| \leq \|X\| \|Y\|$ for all matrices $X$ and $Y$ of compatible size. In fact, a unitarily invariant matrix norm $\|\cdot\|$ is sub-multiplicative $\Longleftrightarrow \|X\|_2 \leq \|X\|$ for all matrices $X.$   If a matrix norm  $\|\cdot\|$ is unitarily invariant then  $ \|XYZ\| \leq \|X\|_2 \|Y\| \|Z\|_2$ for all matrices $X, Y $ and $Z$ of compatible size. The converse is also true. Indeed, if $ \|XYZ\| \leq \|X\|_2 \|Y\| \|Z\|_2$ for all matrices $X, Y $ and $Z$ of compatible size, then $ \|UXV\| \leq \|U\|_2 \|X\| \|V\|_2  = \|X\|$ for all $X$ and  unitary matrices $U$ and $V$ of compatible size. On the other hand, $\| X \| = \|U^* UXVV^*\| \leq \|U^*\|_2 \|UXV\| \|V^*\|_2 = \|UXV\|$ shows that $ \|UXV\|= \|X\|$ for all unitary matrices $U$ and $V$ of compatible size. Hence $\|\cdot\|$ is  unitarily invariant. 
	The property $ \|XYZ\| \leq \|X\|_2 \|Y\| \|Z\|_2$ is required in the proof of Theorem~\ref{inclusion}(a).
\end{remark}

\vone 
\section{Perturbation of eigenvalues of matrices}
   Let $ A, B \in \C^{n\times n}.$ Then setting $E := B-A$ and $ F := A-B$, we have  $  B =   A +E$ and $   A =  B +F. $ Thus $B$ can be thought of as a perturbation of $A$ and vice-versa. Our aim in this section is to analyze perturbed eigenvalues and derive  bounds. To that end, we briefly discuss the evolution of eigenvalues of  $A(t) := A+tE$ for all $ t \in \C.$  Owing to the
   special nature of the perturbations $A+tE, \, t \in \C,$ it is to
   be expected that the $\ep$-pseudospectrum of $A$ may fail to give
   specific information about the effect of these perturbations on
   the spectral properties of $A.$ \von
   
    It is well known~\cite{baum,kato} that the eigenvalues  of $A+tE$ are branches of one or several analytic functions having
   at most algebraic singularities and are everywhere continuous on
   $\C.$ Further, the number of distinct eigenvalues remains the same
   for all $t \in \C,$ except  for some exceptional points which form
   a closed discrete subset   of $\C.$ Generically, a multiple eigenvalue $\lam$
   of $A$ {\it splits} to form a $(0, \lam)$-group
   eigenvalues, that is, eigenvalues generated from the splitting of
   $\lam$ as the perturbation $A+tE$ is switched on, which, for sufficiently
   small $|t|$, consists of at most $m(\lam)$ eigenvalues of $A+tE,$
   where $m(\lam)$ is the algebraic   multiplicity of $\lam.$  The $(0,
   \lam)$-group is further divided into one or several cycles
   consisting of distinct elements where eigenvalues in each cycle
   can be developed from $\lam$ by Laurent-Puiseux series. 
   
   \von
  
  While analyzing the eigenvalues of $A+tE$ when $ t$ varies $ \C,$  the main 
  focus has been on the eigenvalues of $A$ which
  split as the perturbation $A+tE$ is switched on - which is no
  doubt a dominant case. However, the structure of $E$ may be such
  that some eigenvalues (including the multiplicities) of $A$ may remain
  unaffected for some or all $t \in \C.$  With a view to analyzing these issues, a geometric  framework has been developed in \cite{alambora3, alambora4} which led
  to a decomposition of $\eig(A)$ into three disjoint sets:
  $$ \eig(A) = \sig^{\infty}(A, E) \cup \sig^f(A, E) \cup \sig^u(A,
  E).$$ Here $\sig^{\infty}(A, E), \sig^f(A, E)$ and $\sig^u(A, E),$
  respectively, denote the set of infinitely stable, finitely stable
  and unstable eigenvalues of $A$ with respect to $E.$ The eigenvalues in each set evolve in a distinct manner as
  $t$ varies in $\C.$    Set $ \sig^s(A, E) := \sig^f(A, E) \cup \sig^{\infty}(A,
  E).$
  
  \von 
   An eigenvalue  $\lam \in \eig(A)$ is infinitely stable (in short,
  $\infty$-stable) with respect to $E$ if $ \lam \in \eig(A+tE)$  and 
  the algebraic multiplicity of $\lam$ remains constant for all $ t
  \in \C.$  An eigenvalue $\lam \in \eig(A)$ is called finitely
  stable with respect to $E$ if there is $\ep_{\lam} >0$ such that $
  \lam \in \eig(A+tE)$ and the algebraic multiplicity of $\lam$
  remains constant for all $|t| < \ep_{\lam}$ but the multiplicity
  increases for some $|t| = \ep_{\lam}.$ An eigenvalue $\lam \in
  \eig(A)$ is said to be stable with respect to $E$ if it is either
  finitely stable or $\infty$-stable. An eigenvalue $\lam $ is
  called unstable with respect to $E$ if it is not stable with
  respect to $E$, see~\cite{alambora3, alambora4}.
  
  \vone\begin{remark} \label{sigmau}
  Observe that if $ \lam \in \sig^s(A, E)$ then $ \lam \in \eig(A+tE)$ for all $ t \in \C.$ On the other hand, if  $ \lam \in \sig^u(A,E)$ then  the totality of the eigenvalues of $A+tE$
  generated from the splitting of $\lam$ is known as the  $(0,\lam)$-group~(\cite{baum},\cite{kato}). For $ \lam \in \sig^u(A, E),$ let $\Lam(t)$ denote
  the $(0, \lam)$-group eigenvalues of $A+tE.$ Thus, for perturbation bounds on eigenvalues of $A+tE$, we only need to investigate the eigenvalues of $A$  in $\sig^u(A, E)$ and the corresponding $(0, \lam)$-groups  eigenvalues of $A+tE$.
  \end{remark}

  \vone 
  \begin{exam}\cite{alambora4}\label{ex:st}
  	 Let $A := \diag( A_1, A_2, A_3)$ and $E:= \diag(E_1, E_2,
  		E_3),$ where {\small $$A_1 := \left[ {\ba{cc} 2 & 1\\ 0 & 2\ea}\right],
  		\,\, A_2
  		:= \left[{\ba{cc} -1 & 1 \\ 0 & 4\ea}\right], \,\, A_3 :=
  		\left[{\ba{cc} 3 & 0 \\ 0 & 1\ea}\right], \,E_2
  		:=\left[{\ba{cc} 0 & 1 \\ 0 & 0 \ea}\right], \,\, E_3 :=\left[
  		{\ba{cc} 1 & 1 \\ 1 & 1\ea} \right]$$} and $E_1$ is a 2-by-2 zero
  		matrix. Then the eigenvalues of $A$ are $2,\, -1,\, 4,\, 3, \,1$
  		and the eigenvalues of $A+tE$ are $2,\, -1,\, 4,\, 2+t
  		+\sqrt{1+t^2}, \,\, 2+t - \sqrt{1+t^2}.$

  		This shows that  $\sig^{\infty}(A, E) :=\{ 2\}, \,\, \sig^f(A, E) := \{-1, 4 \}$
  		and $\sig^u(A, E):=\{ 1, 3\}$ and that the effect of the
  		perturbations $A+tE, \, t \in \C,$ on each of these sets is
  		different. Note that the eigenvalue $2$ is 
  		insensitive to the perturbation $A+tE$ in the sense that it
  		remains in the spectrum of $A+tE$ and its algebraic multiplicity
  		remains constant for all $t \in \C.$
  		
  	On the other hand, the eigenvalues in $\sig^f(A, E)$ are also
  	eigenvalues of $A+tE$ for all $t \in \C,$ however, the algebraic multiplicities of these eigenvalues
  	are no longer constants for all $t \in \C.$ For $t= -4/3$ the
  	eigenvalue $1 \in \sig^u(A, E)$ moves  and coalesces with $-1$ at
  	$-1$ thereby increasing the multiplicity of $-1$ from one to two.
  	Similarly, for $t= 3/4$  the eigenvalue $3 \in \sig^u(A, E)$ moves
  	and coalesces with $4$ at $4$ so that the multiplicity of $4$
  	increases from one to two. But the multiplicity of $-1$ (resp.,
  	$4$) remains constant for all $  |t| < 4/3$ (resp., $ |t| < 3/4$
  	). Thus, the eigenvalues in $\sig^f(A, E)$ are {\it finitely
  		stable} with respect to $E$  in the sense that each $\lam \in
  	\sig^f(A, E)$ remains in the spectrum of  $A+tE$ and the algebraic
  	multiplicity remains constant only up to a certain magnitude of
  	$t.$ Once the magnitude of $t$ exceeds this {\it critical } value,
  	an eigenvalue $\mu \in \sig^u(A, E)$ moves and coalesces with
  	$\lam$ at $\lam$ thereby increasing the multiplicity of $\lam.$

  	Finally, as is evident, the eigenvalues in $\sig^u(A, E)$ move with $t$
  	continuously. Thus the eigenvalues in $\sig^u(A, E)$ are {\it
  		unstable} with respect to $E$ in the  sense that each eigenvalue
  	$\lam$ moves with $t$ and/or the algebraic multiplicity of $\lam$
  	changes as the perturbation is switched on.	\,$\blacksquare$\end{exam}
  
  \vone 
  

Let $R(z) := (A-zI)^{-1}$ for $z \in \rho(A).$ Suppose that $z \in \eig(A+tE)$ for some
  $t \in \C.$ Then there exists nonzero $v \in \C^n$ such that
  $(A+tE)v = z v.$ This shows that  $(A-zI)^{-1}Ev
  = -t^{-1}v$ which in turn shows that  $-t^{-1} \in \eig(ER(z))$  when $ z \in \rho(A).$ Hence we have $|t| r_{\sig}(ER(z)) \geq 1.$

  \von
  Note that the set of singularities of  $r_{\sig}(ER(z))$ is a subset of $\eig(A).$ 
  Let $\phi$ denote the unique subharmonic extension~(see,\cite{alambora4}) of $r_{\sig}(ER(z))$ on the domain  $D_{\phi}$ given by 
    $$D_{\phi} := \rho (A) \cup \{ \lam \in \eig (A) : \dm{\limsup_{ z
  		\rightarrow \lam,\, z \in \rho (A)}} \, r_{\sig}(ER(z))
  < \infty\}.$$
 Then $\phi(z)$ is nonconstant on open subsets of
  $D_{\phi}$ unless it is identically equal to zero on $\C,$ see~\cite{alambora4}.  Consider 
  $$ \Lam_{\ep}(A, E) :=  \cup_{|t|
  	\leq \ep} \eig (A+tE) \text{ and } \sig_{\ep} (A,E)
  := \left\{ z \in \C : \phi (z)\geq \ep^{-1}\right\},$$ where it is
  assumed that $\phi (z) = \infty$ for $z \in \C \setminus D_{\phi}.$ Then $ \sig_{\ep}(A, E) \subset \Lam_{\ep}(A, E)$ and the inclusion may be strict. Also, the boundary of $\sig_{\ep}(A, E)$ may contain  eigenvalues of $A$,  see~\cite{alambora4}. The stable and unstable eigenvalues are characterized by $\phi(z)$ and $\sig_{\ep}(A, E).$

  \vone
  \begin{theorem}\cite{alambora4}\label{ra} Let $ \lam \in \sig(A).$ Then we have the
 	following.
 	\begin{itemize} 
 	\item[(a)] $\lam \in \sig^{\infty}(A, E) \Longleftrightarrow \phi(\lam) = 0  \Longleftrightarrow \lam \not\in \sig_{\ep}(A, E)$ for all
 	$\ep > 0.$
 	
 	\item[(b)] $\lam \in \sig^f(A, E)  \Longleftrightarrow  0 < \phi(\lam)< \infty  \Longleftrightarrow \lam \not\in \sig_{\ep}(A, E)$ for some but not all	$\ep>0.$ 
 	
  	\item[(c)] $\lam \in \sig^u(A, E) \Longleftrightarrow \phi(\lam) = \infty  	\Longleftrightarrow \lam \in \sig_{\ep}(A, E)$ for all $\ep  > 0.$
 	\end{itemize} 
 \end{theorem}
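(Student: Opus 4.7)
The plan is to first dispose of the right-most equivalences in each of (a)--(c), which follow directly from the definition $\sig_{\ep}(A,E) = \{z \in \C : \phi(z) \geq 1/\ep\}$ with the convention $\phi(z) = \infty$ on $\C\setminus D_\phi$. Indeed, $\phi(\lam) = 0$ iff $\phi(\lam) < 1/\ep$ for every $\ep > 0$ iff $\lam \notin \sig_{\ep}(A,E)$ for every $\ep > 0$; symmetrically, $\phi(\lam) = \infty$ iff $\phi(\lam) \geq 1/\ep$ for every $\ep > 0$ iff $\lam \in \sig_{\ep}(A,E)$ for every $\ep > 0$; and the intermediate case $0 < \phi(\lam) < \infty$ is the remaining alternative, equivalent to $\lam \in \sig_{\ep}(A,E)$ for large $\ep$ but not for small $\ep$. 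These are essentially tautologies, so the bulk of the work lies in the left-most equivalences.

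The main tool for the left-most equivalences is the identity
\[ r_{\sig}(ER(z)) = \sup\bigl\{1/|t| : t \in \C\setminus\{0\},\; z \in \eig(A+tE)\bigr\} \qquad (z \in \rho(A)), \]
which was essentially noted just before the theorem: $(A+tE)v = zv$ with $z \in \rho(A)$ forces $v = -tR(z)Ev$, hence $-t^{-1} \in \eig(ER(z))$, and conversely. Thus $r_{\sig}(ER(z))$ being large for $z$ near $\lam$ encodes eigenvalues of $A+tE$ that approach $\lam$ through $\rho(A)$ for small $|t|$, while $r_{\sig}(ER(z))$ staying small near $\lam$ prevents such clustering.

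Since $\eig(A) = \sig^{\infty}(A,E) \sqcup \sig^f(A,E) \sqcup \sig^u(A,E)$ is exhaustive, and the trichotomy $\phi(\lam) \in \{0\} \sqcup (0,\infty) \sqcup \{\infty\}$ is too, it suffices to prove one forward implication per case. For $\lam \in \sig^{\infty}(A,E)$, I will factor
\[ \det(zI - (A+tE)) = (z-\lam)^{m(\lam)} q(z,t), \]
using constancy of the algebraic multiplicity in $t$, which forces $q(\lam, t) \neq 0$ for every $t \in \C$. Continuity of $q$ on the compact set $\overline{B(\lam,1)} \times \overline{B(0,M)}$ then yields $\delta_M > 0$ such that no eigenvalue of $A+tE$ other than $\lam$ lies in $\{|z-\lam| < \delta_M\}$ for $|t| \leq M$, so the identity bounds $r_{\sig}(ER(z)) \leq 1/M$ for $z \in \rho(A) \cap \{|z-\lam| < \delta_M\}$. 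Sending $M \to \infty$ gives $\limsup_{z \to \lam} r_{\sig}(ER(z)) = 0$, i.e., $\phi(\lam) = 0$. For $\lam \in \sig^u(A,E)$, instability produces sequences $t_k \to 0$ and $z_k \to \lam$ in $\rho(A)$ with $z_k \in \eig(A+t_k E)$ (either $\lam$ exits the spectrum for arbitrarily small $|t|$, or members of the $(0,\lam)$-group split off), and the identity gives $r_{\sig}(ER(z_k)) \geq 1/|t_k| \to \infty$, forcing $\phi(\lam) = \infty$. The case $\lam \in \sig^f(A,E)$ combines the two: for any $M < \ep_{\lam}$ the $\sig^{\infty}$-style argument caps $r_{\sig}(ER(z)) \leq 1/M$ near $\lam$, giving $\phi(\lam) \leq 1/\ep_{\lam}$, while coalescence of another eigenvalue $\mu(t)$ with $\lam$ at $|t| = \ep_{\lam}$ supplies $z_k \to \lam$ with $r_{\sig}(ER(z_k)) \geq 1/|t_k| \to 1/\ep_{\lam} > 0$.

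The principal obstacle is the $\sig^{\infty}$ direction: showing $\phi(\lam) = 0$ rather than merely finite requires controlling the other eigenvalues of $A+tE$ for all $t \in \C$, not just in a neighborhood of $t=0$. The polynomial factorization together with the \emph{global} nowhere-vanishing of $q(\lam, \cdot)$ on $\C$, combined with compactness in bounded discs of $t$, is precisely what lifts local non-coalescence into the quantitative bound $r_{\sig}(ER(z)) \leq 1/M$ for arbitrarily large $M$.
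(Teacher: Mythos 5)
The paper states Theorem~\ref{ra} only as a citation to \cite{alambora4} and supplies no proof, so there is no in-text argument to compare yours against; what you have written is a self-contained proof, and it is correct. The organising observation is that $\det(zI-A-tE)=\sum_j a_j(t)(z-\lam)^j$ with $a_j$ polynomial in $t$, so stability of $\lam$ forces $a_j\equiv 0$ for $j<m(\lam)$ and the global factorisation $(z-\lam)^{m(\lam)}q(z,t)$, after which the zero set of the polynomial $t\mapsto q(\lam,t)$ encodes the trichotomy ($\infty$-stable: empty; finitely stable: nonempty with smallest modulus $\ep_{\lam}$; unstable: $q$ not even of this form because some $a_{j_0}\not\equiv 0$ with $j_0<m(\lam)$). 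Combining this with the identity $r_\sig(ER(z))=\sup\{|t|^{-1}: z\in\eig(A+tE)\}$, compactness in bounded $t$-discs, and the removable-singularity characterisation $\phi(\lam)=\limsup_{z\to\lam,\,z\in\rho(A)}r_\sig(ER(z))$ of the subharmonic extension yields exactly the claimed values of $\phi(\lam)$; the reduction to one implication per case via the double exhaustive trichotomy is logically sound. Two small points worth making explicit in a full write-up: the equality of the subharmonic extension with the limsup should be cited rather than assumed (it is what licenses reading $\phi(\lam)$ off the behaviour of $r_\sig(ER(z))$ on $\rho(A)$), and in the $\sig^u$ and $\sig^f$ directions you should note that the splitting eigenvalue paths lie in $\rho(A)$ once close enough to $\lam$, which holds because $\eig(A)$ is finite so $\lam$ is isolated.
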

 
 \vone
 The decomposition of $\Lam_{\ep}(A, E)$ into disjoint sets given in next result plays a crucial role in the spectral  analysis of $A+tE $ for $ t \in \C.$ It also shows that $\Lam_{\ep}(A,E)$ can have isolated points and such isolated points are  stable eigenvalues of $A.$ \von
  
  \begin{proposition} \cite{alambora4} \label{compo} The set $\sig_{\ep}(A, E)$  has the following properties.
   \begin{itemize}
   	\item[(a)] For $\ep > 0$, we have, $\sig_{\ep} (A,E) =
   	\Lam_{\ep}( A, E ) \setminus \{\lam \in \sig (A) :  \phi(\lam)
   	< \ep^{-1}   \}.$
   	\item[(b)]  If $\sig_{\ep}(A,E) \neq \emptyset$ then
   	$\sig_{\ep}(A, E)$ does not have isolated points and each connected 
   	component contains at least one eigenvalue of $A$ in its interior.
   	\item[(c)] $\sig_{\ep}(A, E)$ is the closure of $\{ z \in \C : \phi(z) > \ep^{-1}\}.$
   	
   \end{itemize}
   \end{proposition}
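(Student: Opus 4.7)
My plan is to reduce all three parts to two ingredients: the algebraic correspondence
$$z \in \rho(A) \cap \eig(A+tE) \iff -1/t \in \eig(ER(z)),$$
which identifies $\phi = r_\sig(E R(\cdot))$ on $\rho(A)$ and gives $\phi(z) \geq 1/|t|$ whenever $z \in \rho(A) \cap \eig(A+tE)$; and the strong maximum principle for subharmonic functions applied to $\phi$, using the paper's stated fact that $\phi$ is nonconstant on every open subset of $D_\phi$ (available here because the hypothesis $\sig_\ep(A,E)\neq\emptyset$ rules out $\phi\equiv 0$).

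For (a), I would simply unravel the correspondence: for $z \in \rho(A)$, $z \in \Lam_\ep$ iff there exists $|t| \leq \ep$ with $-1/t \in \eig(ER(z))$, iff $r_\sig(ER(z)) \geq 1/\ep$, iff $z \in \sig_\ep$; for $z \in \sig(A)$, $z \in \Lam_\ep$ is automatic (take $t=0$), while $z \in \sig_\ep$ iff $\phi(z) \geq 1/\ep$, so deleting from $\Lam_\ep$ exactly the eigenvalues $\lam$ with $\phi(\lam) < 1/\ep$ leaves $\sig_\ep$.

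For (b), I would treat the two assertions separately. For the absence of isolated points, suppose $z_0$ is isolated in $\sig_\ep$; then $\phi < 1/\ep$ on a punctured neighborhood $W$, which forces $z_0 \in D_\phi$ (automatic when $z_0 \in \rho(A)$; when $z_0 \in \sig(A)$, the bound on $W$ keeps the defining $\limsup$ finite). Then $\phi$ is subharmonic at $z_0$ with $\phi(z_0) \geq 1/\ep$ strictly exceeding $\phi$ on $W$, contradicting the sub-mean-value inequality. For the eigenvalue in each component, suppose a component $C$ satisfies $C \cap \sig(A) = \emptyset$; choose a bounded connected open neighborhood $U \supset C$ disjoint from the other components of $\sig_\ep$ and from $\sig(A)$, so that $U \subset \rho(A) \subset D_\phi$ and $\phi$ is subharmonic on $U$ with $\phi < 1/\ep$ on $U \setminus C$. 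Picking $z^*$ that maximizes $\phi$ on the compact set $C$ (using upper semicontinuity) makes $z^*$ a global max of $\phi$ on $U$, so the strong maximum principle forces $\phi$ constant on $U$, contradicting nonconstancy. That the eigenvalue then sits in the interior of $C$ is seen most cleanly for an unstable eigenvalue $\lam \in C$: since $\phi(\lam) = \infty$ and $r_\sig(ER(z))$ blows up at $\lam$, an entire neighborhood of $\lam$ satisfies $\phi > 1/\ep$ and hence lies inside $C$.

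For (c), $\overline{\{\phi > 1/\ep\}} \subset \sig_\ep$ is immediate since $\sig_\ep$ is closed. Conversely, let $z \in \sig_\ep$: if $\phi(z) > 1/\ep$ we are done; otherwise $\phi(z) = 1/\ep$, which puts $z \in D_\phi$. Were $z$ not a limit of points with $\phi > 1/\ep$, then on a small ball $V$ around $z$ (shrunk to avoid other eigenvalues) we would have $\phi \leq 1/\ep$ with equality at $z$, so the strong maximum principle forces $\phi \equiv 1/\ep$ on $V$, again contradicting nonconstancy on open subsets of $D_\phi$. The main obstacle throughout is the delicate case analysis at eigenvalues where $\phi$ may be infinite (the unstable eigenvalues, which lie outside $D_\phi$): at every appeal to subharmonicity I must first verify that the point in question lies in $D_\phi$, or else shrink the chosen neighborhood so it excludes such eigenvalues before invoking the maximum principle.
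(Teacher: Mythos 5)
Parts (a) and (c), and the no-isolated-points half of (b), are correct: the eigenvalue correspondence $z\in\rho(A)\cap\eig(A+tE)\iff -1/t\in\eig(ER(z))$ gives (a), and your maximum-principle arguments, with neighborhoods shrunk to stay inside $D_{\phi}$, handle (c) and the absence of isolated points. (The paper cites this proposition from \cite{alambora4} without reproducing a proof, so I am assessing your argument on its own.)

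The gap is in the final assertion of (b), that each component $C$ contains an eigenvalue of $A$ \emph{in its interior}. Your maximum-principle argument only shows $C\cap\sig(A)\neq\emptyset$, and you then handle only the case where the eigenvalue found is unstable. You never rule out the possibility that every eigenvalue of $A$ lying in $C$ is finitely stable; a finitely stable $\lam$ with $\phi(\lam)=\ep^{-1}$ lies on the boundary of $\sig_{\ep}(A,E)$ and is definitely not interior, so that case needs a separate argument. The fix is to run the maximum-principle argument a second time with a connected open $U\supset C$ chosen disjoint from the other components of $\sig_{\ep}(A,E)$ \emph{and} from the finite set $\sig^u(A,E)$: if $C$ contained no unstable eigenvalue then $C\subset D_{\phi}$, such a $U\subset D_{\phi}$ exists, $\phi$ is subharmonic on all of $U$ and attains its maximum on $C$, so the strong maximum principle forces $\phi$ constant on $U$, whence $\phi\equiv 0$ on $\C$, contradicting $\sig_{\ep}(A,E)\neq\emptyset$. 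Finally, even with an unstable $\lam\in C$ in hand, the step ``$r_\sig(ER(z))$ blows up at $\lam$'' is asserted rather than proved: $\phi(\lam)=\infty$ only says $\limsup_{z\to\lam}r_\sig(ER(z))=\infty$, and upgrading this to $r_\sig(ER(z))>\ep^{-1}$ on a full punctured neighbourhood requires the Laurent expansion of $R(z)$ at $\lam$ together with the Puiseux-series description of the eigenvalues of $ER(z)$, under which the spectral radius behaves like a definite fractional power $|z-\lam|^{-p/q}$ and so tends to $\infty$ uniformly whenever its $\limsup$ is infinite.
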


  \vone
   Given a curve  $\Gamma \subset \rho(A)$, consider the disk  
  $$\partial_{\Gamma} := \{t \in \C: |t| < 1/{(\dm{\max_{z \in \Gamma}}\, r_{\sig}(ER(A,z)))}\}.$$  
  Then it is easy to see that $ \Gamma \subset \rho(A+tE)$ for $ t \in \partial_{\Gamma}.$ Set $ R(t, z) := (A+tE-zI)^{-1}$ for $ z \in \Gamma.$   If $\sig :=\eig(A)\cap\mathrm{Int}(\Gamma)$ is nonempty then the spectral projection $$P(t) := \frac{1}{2\pi i} \int_{\Gamma} R(t, z) dz$$ associated with $A+tE$ and $\sig(t) := 	\eig(A+tE)\cap \mathrm{Int}(\Gamma)$ is analytic in $\partial_{\Gamma}$, see~\cite{chatelin, limbook2}. We prove this result in section~\ref{operator} for bounded linear operators. 
  
  \von
  We need the following result for deriving bounds on the spectral variations of $A.$ Compare this result with \cite[Theorem~3.1]{alambora4}.

  \vone 
  \begin{theorem}\label{th:rafik}
  	 Suppose that $\sig_{\ep}(A, E)$ has $m$ connected components
  	$\Delta_1, \ldots, \Delta_m.$ Let $\sig_j := \eig(A)\cap\Delta_j$
  	and $\sig_0 := \eig(A)\cap\sig_{\ep}(A,E)^c.$  Then $\sig_j(t):=\eig(A+tE)\cap \Delta_j$ is nonempty and the spectral projection $P_j(t)$ associated with $A+tE$ and $\sig_j(t)$ is analytic for all  $|t| \leq \ep$ and $ j=0:m.$  
  	Further, 	$\eig((A+tE)_{|R(P_0(t))}) = \sig_0$ and $\eig((A+tE)_{|R(P_j(t))}) = \sig_j(t) \subset \Delta_j$ for all $|t| \leq \ep$ and $j=1:m.$

  	Furthermore, $P_0(t)+\cdots +P_m(t) = I$ for $|t|\leq \ep$. 	In particular, $\Delta_j$ contains the same number of eigenvalues of $A$ and $A+tE$ for all $|t| \leq \ep$ and $ j=1:m.$
  \end{theorem}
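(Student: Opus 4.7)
The plan is to reduce everything to the contour-integration machinery already developed in Theorems~\ref{spd} and~\ref{proj} and in Proposition~\ref{contp}, by choosing Cauchy contours that isolate each component $\Delta_j$ of $\sig_\ep(A,E)$ and each point of $\sig_0$ inside a region where the resolvent $(A+tE-zI)^{-1}$ is uniformly bounded for $|t|\le\ep$. First I would exploit the compactness of $\sig_\ep(A,E)$, the openness of $\sig_\ep(A,E)^c$, and the finiteness of $\sig_0\subset\sig_\ep(A,E)^c$ to construct pairwise disjoint contours $\Gamma_1,\ldots,\Gamma_m$ and $\Gamma_0$, where $\Gamma_j$ ($j\ge 1$) encloses only the component $\Delta_j$ (hence only the eigenvalues in $\sig_j$) and $\Gamma_0$ is a disjoint union of small loops, one around each $\lam\in\sig_0$; each contour is taken in $\rho(A)\cap\sig_\ep(A,E)^c$. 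Since $\phi(z)=r_\sig(ER(z))$ is upper semicontinuous on $\rho(A)$ and satisfies $\phi(z)<1/\ep$ on the compact set $\Gamma_j$, one obtains $\max_{z\in\Gamma_j}r_\sig(ER(z))=1/\ep_j$ for some $\ep_j>\ep$; the factorization $A+tE-zI=(A-zI)(I+tER(z))$ then gives $\Gamma_j\subset\rho(A+tE)$ for every $|t|<\ep_j$, and in particular throughout $|t|\le\ep$.

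With the contours in hand, I define $P_j(t):=-\frac{1}{2\pi i}\int_{\Gamma_j}(A+tE-zI)^{-1}\,dz$. Substituting the Neumann expansion $(A+tE-zI)^{-1}=R(z)\sum_{k=0}^\infty(-tER(z))^k$ and using uniform convergence on the compact set $\{|t|\le\ep\}\times\Gamma_j$ (guaranteed by $|t|\,r_\sig(ER(z))\le\ep/\ep_j<1$), exactly as in the proof of Theorem~\ref{proj}, yields analyticity of $P_j(t)$ on the open disk $|t|<\ep_j$. Proposition~\ref{contp} then forces $\mathrm{rank}(P_j(t))$ to be constant in $t$, and Theorem~\ref{spd} identifies this constant as $\sum_{\lam\in\sig_j}m(\lam,A)$ for $j\ge 1$ and $\sum_{\lam\in\sig_0}m(\lam,A)$ for $j=0$. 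Applying Theorem~\ref{spd} now to $A+tE$ with contour $\Gamma_j$ gives $\eig((A+tE)_{|R(P_j(t))})=\eig(A+tE)\cap\mathrm{Int}(\Gamma_j)$. Since $\eig(A+tE)\subset\Lam_\ep(A,E)$ and, by Proposition~\ref{compo}(a), $\Lam_\ep(A,E)=\sig_\ep(A,E)\cup\sig_0$, while $\mathrm{Int}(\Gamma_j)$ meets this union only in $\Delta_j$ (respectively, only in $\sig_0$ for $j=0$), I conclude $\sig_j(t)\subset\Delta_j$ and $\eig((A+tE)_{|R(P_0(t))})\subset\sig_0$. Nonemptiness of $\sig_j(t)$ for $j\ge 1$ follows from $\mathrm{rank}(P_j(t))>0$, which uses Proposition~\ref{compo}(b).

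To upgrade the $j=0$ inclusion to the stated equality $\eig((A+tE)_{|R(P_0(t))})=\sig_0$, I would invoke Theorem~\ref{ra}: every $\lam\in\sig_0$ satisfies $\phi(\lam)<1/\ep$, hence belongs to $\sig^s(A,E)$ with critical threshold $\ep_\lam=1/\phi(\lam)>\ep$, so $\lam$ remains an eigenvalue of $A+tE$ with unchanged algebraic multiplicity throughout $|t|\le\ep$; the multiplicity count supplied by $\mathrm{rank}(P_0(t))$ then closes the gap. Finally, $P_0(t)+P_1(t)+\cdots+P_m(t)=I$ follows from $\bigcup_{j=0}^m\mathrm{Int}(\Gamma_j)\supset\sig_\ep(A,E)\cup\sig_0\supset\Lam_\ep(A,E)\supset\eig(A+tE)$ for $|t|\le\ep$, via the standard Cauchy-integral identity after deforming the union of contours into a single one enclosing all of $\eig(A+tE)$. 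I expect the main difficulty to lie in the spectral-localization step: forcing every eigenvalue path of $A+tE$ to remain in its original component $\Delta_j$ and forbidding any eigenvalue in $\sig_0$ from leaking into $\sig_\ep(A,E)$ or vice versa throughout $|t|\le\ep$. The decisive input here is Proposition~\ref{compo}(a), which confines $\Lam_\ep(A,E)$ to $\sig_\ep(A,E)\cup\sig_0$; once this is coupled with the disjointness of the contour interiors and the stability information from Theorem~\ref{ra}, the remaining analyticity and constant-rank arguments are essentially a rerun of the techniques already worked out in Theorem~\ref{proj}.
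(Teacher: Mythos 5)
Your proposal is correct and follows essentially the same route as the paper: construct pairwise disjoint Cauchy contours isolating each $\Delta_j$ and a contour (or union of small loops) around $\sig_0$, use $\phi(z)=r_\sig(ER(z))<1/\ep$ on each $\Gamma_j$ together with upper semicontinuity to guarantee $\Gamma_j\subset\rho(A+tE)$ for $|t|\le\ep$, obtain analyticity of $P_j(t)$ by Neumann expansion of the resolvent, invoke Proposition~\ref{contp} for constant rank and Theorem~\ref{spd} for the spectral identification, and close the $j=0$ case by combining the localization $\Lam_\ep(A,E)=\sig_0\cup\sig_\ep(A,E)$ from Proposition~\ref{compo} with the stability of $\sig_0\subset\sig^s(A,E)$ from Theorem~\ref{ra}. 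The only cosmetic difference is that the paper passes directly to the equality $\eig(A+tE)\cap\mathrm{Int}(\Gamma_0)=\sig^s(A,E)\cap\mathrm{Int}(\Gamma_0)=\sig_0$ (using Remark~\ref{sigmau}) rather than upgrading an inclusion via a multiplicity count as you do, but the underlying content is the same.
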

  
  \von
  \begin{proof} If $ \lam \in \sig_0$ then by Proposition~\ref{compo}, $\phi(\lam) < 1/\ep.$ Hence by Theorem~\ref{ra}, we have  $\sig_0 \subset \sig^s(A, E).$  Next, note that  $ \eig(A) = \cup^m_{j=0} \sig_j$ is a disjoint partition of $\eig(A)$. Let $P_j$ be the spectral projection of $A$ corresponding to $\sig_j$ for $j=0:m.$ Then obviously $P_0+\cdots+P_m =I.$

  	By Proposition~\ref{compo}, $\Lam_{\ep}(A, E) = \sig_0 \cup \sig_{\ep}(A, E)$ and $\sig_{\ep}(A,E) = \cup^m_{j=1} \Delta_j$ are disjoint partitions of $\Lam_{\ep}(A, E)$ and  $\sig_{\ep}(A,E)$, respectively, and the components  $\Delta_1, \ldots, \Delta_m$ are compact. Hence there exist Cauchy contour $\Gamma_j \subset \rho(A)$  for $j=0:m$ with the following properties:
  	\begin{itemize}
  		\item $\sig_0 \subset \mathrm{Int}(\Gamma_0)$ and $ \sig_{\ep}(A, E) \subset \mathrm{Ext}(\Gamma_0).$  
  		\item $\Delta_j \subset \mathrm{Int}(\Gamma_j)$ and $ \sig_0 \cup(\cup_{i\neq j} \Delta_i) \subset \mathrm{Ext}(\Gamma_j)$ for $ j=1:m.$ 
  	\end{itemize}
 
 \von 
  Let  $ z \in \Gamma_j.$ Then by Proposition~\ref{compo},  $\phi(z) = r_{\sig}(ER(z)) < 1/\ep.$ This shows that if $|t| \leq \ep$ then  
   $ |t| r_{\sig}(ER(z)) < 1$ which implies that $\{ t \in \C : |t| \leq \ep\} \subset \partial_{\Gamma_j}.$ Hence $ \Gamma_j \subset \rho(A+tE)$ for $ t \in \partial_{\Gamma_j}$ and the spectral projection  $$P_j(t) = \frac{1}{2\pi i}\int_{\Gamma_j} R(t, z) dz$$ is analytic in $\partial_{\Gamma_j}. $  Consequently,  $P_j(t)$ is analytic for $|t| \leq \ep$ and $ j=0:m.$

   \von
    By Theorem~\ref{spd},   $\eig((A+tE)_{|R(P_j(t))}) = \eig(A+tE)\cap  \mathrm{Int}(\Gamma_j)=\eig(A+tE)\cap \Delta_j$ for all $|t| \leq \ep$ and $j=1:m$  and $\eig((A+tE)_{|R(P_0(t))}) = \eig(A+tE)\cap \mathrm{Int}(\Gamma_0).$

   \von
    Next, note that $ \sig^s(A, E) \subset \eig(A+tE) \subset \Lam_{\ep}(A, E)$ for $|t| \leq \ep.$  By Proposition~\ref{compo},   $ \eig(A+tE) \cap \mathrm{Int}(\Gamma_0) =  \Lam_{\ep}(A, E) \cap \mathrm{Int}(\Gamma_0) = \sig^s(A, E) \cap \mathrm{Int}(\Gamma_0) = \sig_0$ for  $|t| \leq \ep.$
   
   \von
   Obviously, $\eig(A+tE) = \sig_0 \cup(\cup_{j=1}^m\sig_j(t))$ is a disjoint partition of $\eig(A+tE)$ for all $|t| \leq \ep.$ Hence $P_0(t)+\cdots+P_m(t) = I$ for all $ |t| \leq \ep.$  Since $P_j(t)$ is analytic, by Proposition~\ref{contp}, $\rank(P_j(t)) = P_j$ for all $ |t| \leq \ep$ and $ j=0:m.$ Hence the number of eigenvalues (counting multiplicity) in $\sig_j(t)$ is $\rank(P_j(t)) = \rank(P_j)$ for all $ |t| \leq \ep$ and $ j=0:m.$ Consequently, each $\Delta_j$ contains the same number of eigenvalues of $A$ and $A+tE$ for all $|t| \leq \ep.$ This completes the proof.   
  \end{proof}

   \vtwo
  \subsection{Bounds on perturbed eigenvalues}   
  
  Let $ A \in \C^{n\times n}.$  Then $ A$ has exactly $n$ eigenvalues (counting multiplicity). Let $ \lam_1(A), \ldots, \lam_n(A)$  denote the $n$ eigenvalues of $A$.  Let $ B \in \C^{n\times n}.$   First, we present two very well known global bounds for the variation of the spectrum $\eig(A)$. To that end, we consider two measures of distance between  the spectra of $A$ and $B$, namely, the Housedroff distance and  the optimal matching distance; see~\cite{stewbook}.\\

\begin{itemize}\item {\bf   Housedroff distance:}  $$ d_H( \eig(A), \eig(B)) := \max\left( \max_{\lam \in \eig(A)}\dist(\lam, \; \eig(B)),  \;\; \max_{\mu \in \eig(B)}\dist(\mu, \; \eig(A))\right).$$

\item {\bf Optimal matching distance:} 
$$ d_m( \eig(A), \eig(B)) := \min_{\eig \in S_n} \left( \max_{1\leq j\leq n} |\lam_j(A) - \lam_{\sig(j)}(B)|   \right),$$ where $S_n$ is the set of all permutations of $\{1, 2, \ldots, n\}.$ 

\end{itemize}


\vone

\begin{lemma}[Hadamard inequality] \label{hada} Let $ A \in \C^{n\times n}.$ Then   $ |\det(A)| \leq \prod^n_{j=1}\|Ae_j\|_2 .$ 
\end{lemma}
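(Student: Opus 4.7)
The plan is to prove Hadamard's inequality via the QR factorization of $A$, which is perfectly suited to this bound because it directly relates the columns of $A$ to the diagonal entries whose product equals $|\det(A)|$ up to a unit modulus factor.

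First I would dispose of the degenerate case: if $A$ is singular, then $\det(A)=0$ and the inequality $0 \leq \prod_j \|Ae_j\|_2$ is trivial (both sides are zero if some column vanishes). So the interesting case is $A$ nonsingular, in which every column of $A$ is nonzero.

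In the nonsingular case I would invoke the QR decomposition: write $A = QR$ where $Q \in \C^{n\times n}$ is unitary and $R \in \C^{n\times n}$ is upper triangular with $r_{jj} > 0$ for every $j$. Since $|\det(Q)| = 1$, this immediately yields
\[
|\det(A)| \;=\; |\det(R)| \;=\; \prod_{j=1}^{n} r_{jj}.
\]
The key geometric observation then is that $Ae_j = Q r_j$, where $r_j$ denotes the $j$-th column of $R$. Unitarity of $Q$ preserves the $2$-norm, so $\|Ae_j\|_2 = \|r_j\|_2$. Because $R$ is upper triangular, $r_j$ has zeros below row $j$, so
\[
\|Ae_j\|_2^2 \;=\; \sum_{i=1}^{j} |r_{ij}|^2 \;\geq\; r_{jj}^2,
\]
which gives $\|Ae_j\|_2 \geq r_{jj}$. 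Taking the product over $j=1,\ldots,n$ yields $\prod_j \|Ae_j\|_2 \geq \prod_j r_{jj} = |\det(A)|$, as required.

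There is no real obstacle here; the argument is essentially a one-line consequence of QR plus the triangular structure. The only thing worth stating cleanly is the observation $\|r_j\|_2 \geq r_{jj}$, which captures the entire content of the inequality: Hadamard's bound is tight precisely when each column of $R$ has only its diagonal entry nonzero, i.e., when the columns of $A$ are pairwise orthogonal. If one wanted a proof avoiding QR, an equivalent route would be Gram--Schmidt: let $q_1,\ldots,q_n$ be the orthonormal vectors produced from the columns $a_1,\ldots,a_n$ of $A$, and observe that $|\det(A)| = |\det([q_1,\ldots,q_n])| \cdot \prod_j \|\text{proj}_{q_j}a_j\|_2 \leq \prod_j \|a_j\|_2$; but the QR route is cleaner and more in keeping with the linear-algebraic style of the preceding sections.
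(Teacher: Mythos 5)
Your proof is correct and uses essentially the same argument as the paper: QR factorization, $|\det(A)| = \prod_j |r_{jj}|$, and $|r_{jj}| \leq \|Re_j\|_2 = \|Ae_j\|_2$. The separate handling of the singular case and the Gram--Schmidt remark are harmless extras; the paper's version is the same argument in one line without them.
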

 \begin{proof} Let $ A = QR$ be a QR factorization of $A,$ where $Q$ is unitary and $R$ is upper triangular.  Then $ |\det(A)| = |\det(R)| = \prod^n_{j=1}|r_{jj}| \leq \prod^n_{j=1} \|Re_j\|_2  = \prod^n_{j=1}\|Ae_j\|_2.$ \end{proof} 

\vone
\begin{lemma} \label{lem2}  Let	 $ A, B \in \C^{n\times n}.$ Let $\lam \in \eig(A).$ Then 
$$|\det( \lam I - B )| \leq \|A-B\|_2 (\|A\|_2+\|B\|_2)^{n-1}.$$\end{lemma}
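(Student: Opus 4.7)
The plan is to combine the existence of a unit eigenvector of $A$ for $\lambda$ with Hadamard's inequality (Lemma~\ref{hada}) in a well-chosen orthonormal basis. Since $\lambda \in \eig(A)$, pick a unit vector $x \in \C^n$ with $Ax = \lambda x$, so that $(\lambda I - A)x = 0$ and therefore
\[
(\lambda I - B)x = (\lambda I - A)x + (A - B)x = (A-B)x.
\]
This is the key observation: the action of $\lambda I - B$ on this distinguished direction $x$ is controlled by $\|A - B\|_2$ rather than by $\|\lambda I - B\|_2$.

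Next, I would extend $x$ to an orthonormal basis $\{x, u_2, \ldots, u_n\}$ of $\C^n$ and assemble it into a unitary matrix $U := [\, x \;\; u_2 \;\; \cdots \;\; u_n\,] \in \C^{n\times n}$. Since $|\det U^*| = |\det U| = 1$, we have
\[
|\det(\lambda I - B)| = |\det\bigl(U^*(\lambda I - B)U\bigr)|.
\]
Apply Hadamard's inequality (Lemma~\ref{hada}) to $M := U^*(\lambda I - B)U$:
\[
|\det M| \leq \prod_{j=1}^n \|M e_j\|_2 = \prod_{j=1}^n \|(\lambda I - B)\, U e_j\|_2,
\]
using that $U^*$ is unitary and hence preserves the $2$-norm of each column.

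Finally, I bound each factor. For $j = 1$, $Ue_1 = x$, so by the first step $\|(\lambda I - B)x\|_2 = \|(A-B)x\|_2 \leq \|A-B\|_2\|x\|_2 = \|A-B\|_2$. For $j \geq 2$, $Ue_j = u_j$ is a unit vector, so
\[
\|(\lambda I - B)u_j\|_2 \leq \|\lambda I - B\|_2 \leq |\lambda| + \|B\|_2 \leq \|A\|_2 + \|B\|_2,
\]
where the last inequality uses $|\lambda| \leq r_\sigma(A) \leq \|A\|_2$. Multiplying these $n$ bounds gives the claimed estimate. There is no real obstacle here; the only subtlety is remembering to pick the special direction $x$ as the first column of $U$ so that Hadamard extracts one factor of $\|A-B\|_2$ and $n-1$ factors of $\|A\|_2 + \|B\|_2$, rather than a uniform bound that would only yield $(\|A\|_2+\|B\|_2)^n$.
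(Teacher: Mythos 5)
Your proof is correct and follows essentially the same route as the paper: choose a unitary $U$ whose first column is a unit eigenvector of $A$ for $\lambda$, apply Hadamard's inequality to the columns $(\lambda I - B)Ue_j$, and bound the first factor by $\|A-B\|_2$ (via $(\lambda I - B)x = (A-B)x$) and the remaining $n-1$ factors by $\|\lambda I - B\|_2 \le \|A\|_2 + \|B\|_2$. The only cosmetic difference is that you conjugate $\lambda I - B$ by $U$ before invoking Hadamard, whereas the paper applies Hadamard directly to $(B-\lambda I)U$; both are the same computation.
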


\begin{proof}  Let $ U :=\bmatrix{ x_1 & \cdots&  x_n} \in \C^{n\times n}$ be unitary such that $ Ax_1 = \lam x_1.$  
Then   \beano  |\det( B- \lam I)| &\leq& \prod^n_{j=1} \|(B-\lam I)x_j\|_2 \leq \|B-A\|_2 \prod^n_{j=2}\|(B-\lam I)x_j\|_2 \\ &\leq& \|A-B\|_2 (\|A\|_2+\|B\|_2)^{n-1}.   \eeano \end{proof}

\vone 

We  are now ready to prove a global bound due to Elsner~\cite{stewbook} for the variation of the spectrum $\eig(A).$ \vone 

\begin{theorem}[Elsner] \label{elsner} Let $ A, B \in \C^{n\times n}.$  Then   $$ d_H(\eig(A), \eig(B)) \leq (\|A\|_2 + \|B\|_2)^{1-1/n} \| A- B\|_2^{1/n}.$$ \end{theorem}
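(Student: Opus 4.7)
The plan is to prove the bound by controlling $\dist(\lam, \eig(B))$ for each $\lam \in \eig(A)$ (and then using the symmetry of the hypothesis to conclude on the Hausdorff distance).

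First, I would fix an arbitrary $\lam \in \eig(A)$ and write $\eig(B) = \{\mu_1, \ldots, \mu_n\}$ counted with algebraic multiplicity, so that
\[
\det(\lam I - B) = \prod_{j=1}^n (\lam - \mu_j).
\]
The product on the right is bounded below by $\dist(\lam, \eig(B))^n$, since at least one factor has modulus equal to $\dist(\lam, \eig(B))$ and every other factor has modulus at least that. Combined with Lemma~\ref{lem2}, which gives $|\det(\lam I - B)| \leq \|A-B\|_2(\|A\|_2 + \|B\|_2)^{n-1}$, this yields
\[
\dist(\lam, \eig(B))^n \;\leq\; \|A-B\|_2\,(\|A\|_2+\|B\|_2)^{n-1},
\]
and taking $n$-th roots delivers
\[
\max_{\lam \in \eig(A)}\dist(\lam, \eig(B)) \;\leq\; (\|A\|_2+\|B\|_2)^{1-1/n}\,\|A-B\|_2^{1/n}.
\]

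Next, I would observe that the roles of $A$ and $B$ in Lemma~\ref{lem2} are symmetric (one reverses the eigenvector choice: pick a unitary $U$ whose first column is a unit eigenvector of $B$). Applying the same argument with $A$ and $B$ interchanged gives the analogous bound $\max_{\mu \in \eig(B)}\dist(\mu, \eig(A)) \leq (\|A\|_2+\|B\|_2)^{1-1/n}\|A-B\|_2^{1/n}$. Taking the maximum of the two quantities, which is precisely $d_H(\eig(A), \eig(B))$ by definition, yields the stated inequality.

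No step here looks hard: the only genuine content is the Hadamard–type estimate in Lemma~\ref{lem2}, which is already in hand. The one thing I would be careful about is the bookkeeping around multiplicities when I bound the product of $|\lam-\mu_j|$ from below by $\dist(\lam,\eig(B))^n$, since repeated eigenvalues must be included; but this is automatic once one writes $\det(\lam I - B)$ as a product over algebraic multiplicity. No continuity or compactness machinery is needed beyond the fact that $\eig(B)$ is a finite set, so the distance is attained.
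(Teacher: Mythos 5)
Your proof is correct and follows essentially the same route as the paper: bound $\dist(\lam,\eig(B))^n$ below by the product $\prod_j|\lam-\mu_j|=|\det(\lam I-B)|$, apply Lemma~\ref{lem2}, and symmetrize in $A$ and $B$. The only cosmetic difference is that the paper picks $\lam$ as a maximizer up front while you fix an arbitrary $\lam$ and take the maximum at the end.
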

\begin{proof} 
Let $ \alpha :=\dist(\lam, \eig(B)) = \max_{\mu \in \sig(A)}\dist(\mu, \eig(B)).$  Then $ \alpha \leq |\lam - \mu|$ for $ \mu \in \eig(B).$   Hence by Lemma~\ref{lem2}
$$\alpha^n \leq  \prod_{\mu \in \eig(B)}|\lam-\mu| = |\det( B- \lam I)| \leq \|A-B\|_2 (\|A\|_2+\|B\|_2)^{n-1}$$ 


Now, reversing the role of $A$ and $B$, we obtain the desired bound. \end{proof} 


\vone 
\begin{lemma}\cite{bhatiabd} \label{maxpol} Let $\mathcal{C}$ be a continuous curve in $\C$  with
endpoints $a$ and $b.$ If $p(z)$ is a monic complex polynomial of degree $n,$ then	$\dm{\max_{z \in \mathcal{C}}|p(z)| \geq \frac{|b-a|^n}{2^{2n-1}}.}$ \end{lemma}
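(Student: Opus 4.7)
The plan is to reduce the complex problem to the classical Chebyshev estimate on a real interval by projecting onto the line through $a$ and $b$. Assume $a\neq b$ (otherwise the bound is $0$ and trivial) and set $u := (b-a)/|b-a|$, a unit vector giving the direction from $a$ to $b$. Write $p(z)=\prod_{j=1}^n(z-z_j)$ with roots $z_j\in\C$ (possibly repeated).

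First I would introduce the real-valued continuous function $\phi: \mathcal{C}\longrightarrow \R$ defined by $\phi(z):=\mathrm{Re}(z\,\overline{u})$. Since $\phi(b)-\phi(a)=\mathrm{Re}((b-a)\overline{u})=|b-a|$ and $\mathcal{C}$ is connected, the intermediate value theorem gives $\phi(\mathcal{C})\supseteq I$, where $I$ is the closed real interval joining $\phi(a)$ and $\phi(b)$, whose length is exactly $|b-a|$. Next, for each root $z_j$, the basic inequality $|w|\ge |\mathrm{Re}(w\,\overline{u})|$ applied to $w=z-z_j$ yields
\[
|z-z_j|\;\ge\;\big|\mathrm{Re}((z-z_j)\overline{u})\big|\;=\;|\phi(z)-\phi(z_j)|,\qquad z\in\mathcal{C}.
\]
Multiplying over $j=1,\dots,n$ gives $|p(z)|\ge |q(\phi(z))|$, where $q(t):=\prod_{j=1}^n (t-\phi(z_j))$ is a monic real polynomial of degree $n$.

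Now I would pass from the curve to the interval:
\[
\max_{z\in\mathcal{C}}|p(z)|\;\ge\;\max_{z\in\mathcal{C}}|q(\phi(z))|\;\ge\;\max_{t\in I}|q(t)|,
\]
the last inequality holding because $\phi(\mathcal{C})\supseteq I$. At this point the problem is reduced to a classical Chebyshev minimax estimate. The Chebyshev polynomial $T_n$ on $[-1,1]$ has leading coefficient $2^{n-1}$ and sup norm $1$; an affine change of variables mapping $[-1,1]$ to an interval $I$ of length $L=|b-a|$ shows that every monic polynomial of degree $n$ satisfies $\max_{t\in I}|q(t)|\ge 2\,(L/4)^n = L^n/2^{2n-1}$, with equality for the appropriately scaled Chebyshev polynomial. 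Combining this with the previous display yields
\[
\max_{z\in\mathcal{C}}|p(z)|\;\ge\;\frac{|b-a|^n}{2^{2n-1}},
\]
as required.

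The bulk of the argument is essentially a bookkeeping reduction; the only nontrivial input is the Chebyshev minimax bound for monic real polynomials on an interval, which is standard and would be quoted (or proved in one line via the extremality of $T_n$). The mildly delicate point — and probably the one that deserves a sentence of justification — is the use of the intermediate value theorem to ensure that the continuous image $\phi(\mathcal{C})$ truly contains the whole interval $I$, not merely its endpoints, since otherwise we could not invoke the Chebyshev bound on all of $I$.
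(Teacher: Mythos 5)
Your proof is correct. The paper gives no proof of this lemma, simply citing \cite{bhatiabd}; your argument — projecting the curve onto the line through $a$ and $b$ via $\phi(z)=\mathrm{Re}(z\bar u)$, bounding $|p(z)|$ below by a monic real polynomial $q$ of the same degree evaluated at $\phi(z)$, using connectedness to guarantee $\phi(\mathcal C)$ covers an interval of length $|b-a|$, and invoking the Chebyshev minimax bound $\max_I |q|\ge L^n/2^{2n-1}$ — is precisely the classical argument in that reference, and every step checks out.
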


\vone

The next result gives the best possible bound for optimal matching distance between $\eig(A)$ and $\eig(B).$   The proof in~\cite{bhatiabd} uses homotopy argument to count the number eigenvalues in a component. We give a slightly different proof using pseudospectrum  thereby avoiding homotopy argument.  Set $ E := B-A$. Consider the pseudospectrum $\sig_{\ep}(A, E)$ and $\Lam_{\ep}(A, E).$ Since $ \sig := \Lam_{\ep}(A, B) \setminus \sig_{\ep}(A, B) \subset \sig^u(A, E)$ and $ \sig \subset \eig(A+tE)$ for all $ t \in \C,$  for the variation  of $\eig(A)$, we only need to consider the components of $\sig_{\ep}(A, E).$ Recall that $ \Lam_{\ep}(A, E) = \cup_{|t|\leq \ep}\eig(A+tE)$ and $\sig_{\ep}(A, E) \subset \Lam_{\ep}(A, E).$
\vone  

\begin{theorem}[Bhatia-Elsner-Krause, \cite{bhatiabd}]\label{bhatia} Let $ A, B \in \C^{n\times n}.$  Then  $$ d_m(\eig(A), \eig(B)) \leq 4(\|A\|_2 + \|B\|_2)^{1-1/n} \| A- B\|_2^{1/n}.$$ \end{theorem}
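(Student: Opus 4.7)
The strategy is to follow the paper's pseudospectrum-based route, replacing the homotopy argument of~\cite{bhatiabd} by the counting theorem Theorem~\ref{th:rafik} and bounding the resulting component diameters with the monic-polynomial inequality of Lemma~\ref{maxpol}. Set $E := B-A$ and $\delta := \|E\|_2 = \|A-B\|_2$, and consider $\sig_1(A, E)$ (i.e., take $\ep = 1$ in the definition of the paper's pseudospectrum). By Theorem~\ref{th:rafik}, every connected component $\Delta$ of $\sig_1(A, E)$ contains the same number of eigenvalues (counted with multiplicity) of $A$ and of $A+tE$ for every $|t| \leq 1$; specialising to $t = 1$ gives equal counts for $A$ and $B$ in every component. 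The eigenvalues lying outside $\sig_1(A, E)$ are in $\sig^s(A, E)$ by Theorem~\ref{ra} and Proposition~\ref{compo}, so they are shared by $A$ and $B$ and pair trivially with themselves. Defining a permutation $\pi \in S_n$ by matching eigenvalues within each component arbitrarily, one obtains
$$d_m(\eig(A), \eig(B)) \;\leq\; \max_j |\lam_j(A) - \lam_{\pi(j)}(B)| \;\leq\; \max_\Delta \mathrm{diam}(\Delta).$$

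To bound each $\mathrm{diam}(\Delta)$, fix $z_1, z_2 \in \Delta$ and use that $\sig_1(A, E)$ has no isolated points (Proposition~\ref{compo}) to select a rectifiable curve $\mathcal{C} \subset \Delta$ joining $z_1$ and $z_2$. Any $w \in \mathcal{C}$ satisfies $w \in \eig(A+tE)$ for some $|t| \leq 1$, so its unit eigenvector witnesses $\sig_{\min}(A-wI) \leq |t|\|E\|_2 \leq \delta$, whence
$$|p_A(w)| \;=\; \prod_{k=1}^n \sig_k(A-wI) \;\leq\; \delta\,\|A-wI\|_2^{n-1}.$$
Because the eigenvalue paths in Theorem~\ref{th:rafik} can be parametrised by real $t \in [0,1]$ (so that $A+tE = (1-t)A + tB$ is a convex combination), one has $|w| \leq \|A+tE\|_2 \leq \|A\|_2 + \|B\|_2$, and hence $\|A-wI\|_2 \leq \|A\|_2 + |w| \leq 2(\|A\|_2+\|B\|_2)$ uniformly on $\mathcal{C}$.

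Applying Lemma~\ref{maxpol} to the monic polynomial $p_A$ of degree $n$ on $\mathcal{C}$ then yields
$$\frac{|z_1-z_2|^n}{2^{2n-1}} \;\leq\; \max_{w \in \mathcal{C}}|p_A(w)| \;\leq\; \delta\bigl(2(\|A\|_2+\|B\|_2)\bigr)^{n-1},$$
and taking the supremum over $z_1, z_2 \in \Delta$ and then over components produces a bound of the stated form $C(\|A\|_2+\|B\|_2)^{1-1/n}\|A-B\|_2^{1/n}$. The chief obstacle is constant-chasing: the estimate above yields $C = 2^{3-2/n}$, which lies in $[2, 8)$, whereas the claimed value is $C = 4$. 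Sharpening to $4$ requires using the $A$-$B$ symmetry more carefully, for example by splitting $\mathcal{C}$ at the parameter $t = 1/2$ and applying Lemma~\ref{maxpol} separately to $p_A$ on the first subcurve (where $|w|$ is controlled by $\|A\|_2$-dependent quantities such as $\|A\|_2 + \delta/2$) and to $p_B$ on the second subcurve (where the analogous $B$-bound holds), then combining the two half-bounds through the triangle inequality; alternatively, one can centre the pseudospectrum at $(A+B)/2$ with perturbation $(B-A)/2$, so that $A$ and $B$ enter on an equal footing and the factor $\|A-wI\|_2$ is replaced by a smaller quantity.
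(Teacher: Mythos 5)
Your overall architecture is the same as the paper's: set $E := B-A$, count eigenvalues per connected component of $\sig_1(A,E)$ via Theorem~\ref{th:rafik}, and bound the diameter of each component with Lemma~\ref{maxpol} applied to a curve inside the component. Where you diverge is the estimate on $|p_A(w)|$, and that divergence costs you the theorem. You bound $|p_A(w)| = \prod_k \sig_k(A-wI) \leq \sig_{\min}(A-wI)\,\|A-wI\|_2^{n-1}$ and then control $\|A-wI\|_2 \leq \|A\|_2 + |w| \leq 2(\|A\|_2 + \|B\|_2)$, which injects an extra factor $2^{n-1}$ and yields the constant $2^{3-2/n}$; as you yourself note, this exceeds $4$ for every $n \geq 3$, so the statement is not proved.

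The paper avoids this by invoking Lemma~\ref{lem2} at the maximizer $\lam$ produced by Lemma~\ref{maxpol}: since $\lam \in \eig(A(t_0))$ for some $|t_0| \leq 1$, take a unitary $U$ whose first column $x_1$ is a unit eigenvector of $A(t_0)$ for $\lam$ and apply Hadamard's inequality to $A - \lam I$ in the basis $U$. The first column contributes $\|(A-\lam I)x_1\|_2 = \|(A - A(t_0))x_1\|_2 \leq \|A - A(t_0)\|_2 \leq \|A-B\|_2$, and the remaining $n-1$ columns contribute at most $\|A - \lam I\|_2 \leq \|A\|_2 + |\lam| \leq \|A\|_2 + \|A(t_0)\|_2 \leq \|A\|_2 + \|B\|_2$ (after the WLOG $\|A\|_2 \leq \|B\|_2$). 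This gives $|\det(\lam I - A)| \leq \|A-B\|_2(\|A\|_2 + \|B\|_2)^{n-1}$ with no spurious factor of $2^{n-1}$, and hence the constant $2^{(2n-1)/n} = 2^{2-1/n} \leq 4$. Your proposed repairs (splitting the curve at $t=1/2$, recentering at $(A+B)/2$) are not carried out and would not be cleaner than this; the right fix is to use the eigenvector-aligned Hadamard estimate that Lemma~\ref{lem2} packages. A second, smaller issue: your claim that the eigenvalue paths can be parametrised by real $t \in [0,1]$ is not consistent with the paper's definition of $\sig_1(A,E)$, which is built from $\Lam_1(A,E) = \cup_{|t|\leq 1}\eig(A+tE)$ with complex $t$; for complex $t$ one only gets $|w| \leq \|A\|_2 + \|A-B\|_2$, not $|w| \leq \|A\|_2 + \|B\|_2$, so even the weaker constant you report is not fully justified as written.
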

	\begin{proof} Set $ E := B-A$ and consider $A(t) := A+tE$ for $|t| \leq 1.$  Let $\Delta $ be a connected component of the $1$-pseudospectrum $\sig_{1}(A, E)$ (i.e., $\sig_{\ep}(A, E)$ with $\ep =1$). Then by Theorem~\ref{th:rafik},
$\Delta$ contains the same number of eigenvalues of $A$ and $A+E =B.$	
	

\vone
Let $ a, b \in \Delta.$ Then by Lemma~\ref{maxpol} there exits $ \lam \in \Delta $ such that  $ |\det(\lam I - A)| \geq 2^{1-2n}|a-b|^n.$  Hence $ \lam \in \eig(A(t_0)$ for some $ |t_0| \leq 1.$ 
W.L.O.G. assume that $ \|A\|_2 \leq \|B\|_2.$   Then by Lemma~\ref{lem2}, we have    $$ |\det(\lam I - A)| \leq (\|A\|_2 + \|A(t_0)\|_2)^{n-1} \| A- A(t_0)\|_2 \leq (\|A\|_2+\|B\|_2)^{n-1} \|A-B\|_2.$$ This shows that
\beano|a- b| &\leq&  2^{(2n-1)/n} \|A-B\|_2^{1/n} ( \|A\|_2+\|B\|_2)^{1- 1/n}\\  &\leq&  4 \|A-B\|_2^{1/n} ( \|A\|_2+\|B\|_2)^{1- 1/n}.  \eeano Note that the components of $\sig_1(A, E)$ provide optimal matching of eigenvalues of $A$ and $B$. Hence the desired result follows. 
\end{proof}

\vone 
The upper bounds in Theorem~\ref{elsner} and Theorem~\ref{bhatia} are sharp in the sense that $\mathcal{O}\left( \|A-B\|^{1/n}\right)$ is the best possible.  We illustrate this fact by an example. 

\vone

\begin{exam} Let $ \ep \geq 0.$   Consider an $n\times n$ Jordan block $A$ and its perturbation $A(\ep)$ given by 
$$   A := \bmatrix{2 & 1 &  & \\ & \ddots & \ddots & \\ & & \ddots & 1 \\  & & & 2}  \in \C^{n\times n} \; \text{ and } \; A(\ep) := \bmatrix{2 & 1 &  & \\ & \ddots & \ddots & \\ & & \ddots & 1 \\ \ep & & & 2}  \in \C^{n\times n}.$$ Then   $p_{\ep}(x) := \det(x I- A(\ep) ) = (x-2)^n  + \ep$ shows that $\lam :=2$ is the eigenvalue of $A(0)$ of multiplicity $n$ and $A(\ep)$ has $n$ distinct eigenvalues 
$$ \lam_j(\ep) := 2 +  \ep^{1/n}\, e^{ {(2j-1) \pi i}/{n}}, \;  \,  j=1:n,  \text{ when  } \ep >0.$$ Consequently, we have  $ |\lam_j(\ep) -2| = \ep^{1/n}$ for $ j=1:n.$  Hence $$\d_{H}(\eig(A(0), \eig(A(\ep)) = d_m(\eig(A(0), \eig(A(\ep)) = \ep^{1/n}.$$ 

\begin{figure} [h!]
	\begin{center}	
		\includegraphics[scale= .5]{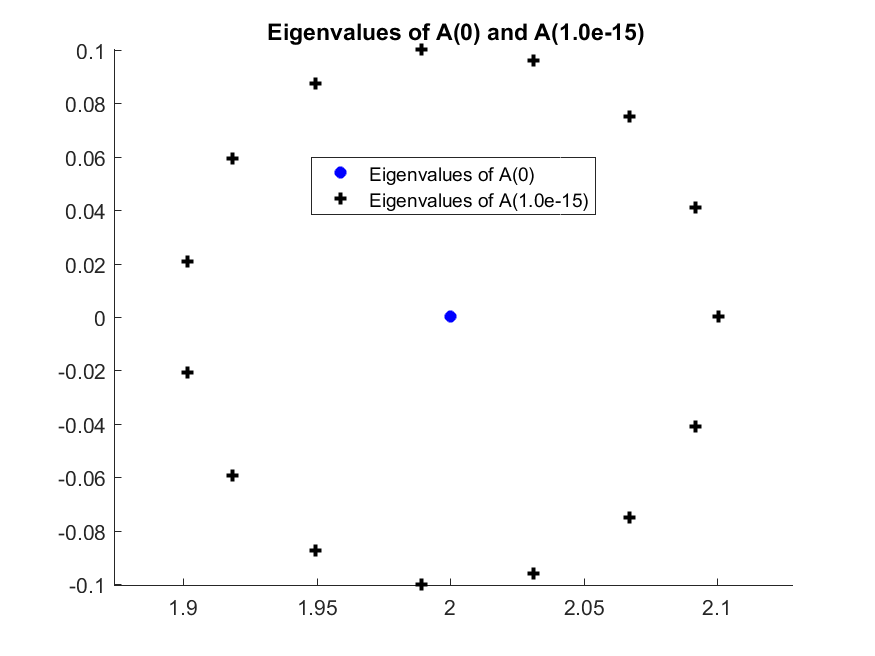}
		\caption{Eigenvalues of Jordan block $A(0)$ and perturbed matrix $ A(10^{-15}).$ }
	\end{center}
\end{figure}

Now, for  $ n=15$ and $ \ep = 10^{-15},$  we have $ |\lam_j(10^{-15}) - 2| = 10^{-15/{15}} = 10^{-1}$ for $j=1:n.$  This shows that the error $10^{-15}$ in the $(15, 1)$ entry of  $A(0) =A$ is magnified  $10^{14}$ times in the eigenvalues of $A(10^{-15}).$

\end{exam} 

\vone

 As the example above shows, the bounds in Theorems~\ref{elsner} and~\ref{bhatia} are sharp when $A$ has a Jordan block of size $n$. Otherwise, these bounds may be gross overestimates of the variation of $\eig(A).$  A better strategy is to derive bounds for variation of individual eigenvalues of $A.$ We now derive global bounds for variation of an eigenvalue of $A.$

\vone 
Let $ A = Q TQ^*$ be a Schur decomposition of $A,$ where $T$ is upper triangular. Then $ T = D + N$ where $ D$ is diagonal and $N$ is nilpotent.   
Thus $$ A = Q( D+N)Q^*.$$ Define {\em  departure from normality } $\delta(A)$ by $ \delta(A) := \|N\|_2.$ 

\vone \begin{theorem}[Henrici, \cite{stewbook}] \label{hen} Let $ A, E \in \C^{n\times n}$ and $ \mu \in \eig(A+E).$   Then there is a $ \lam \in \eig(A)$ such that 
$$ \frac{ \left( \frac{ |\lam - \mu|}{\delta(A)} \right)^n} { 1+ \frac{ |\lam - \mu|}{\delta(A)} + \cdots + \left(\frac{ |\lam - \mu|}{\delta(A)}\right)^{n-1} }  \leq \frac{ \|E\|_2}{\delta(A)}.
$$	
\end{theorem}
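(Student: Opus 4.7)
The plan is to compare $\mu$ with the spectrum of $A$ via the norm of the resolvent $(A-\mu I)^{-1}$ and control the resolvent through the Schur decomposition. The trivial case $\mu \in \eig(A)$ is handled by taking $\lambda = \mu$, so assume $\mu \notin \eig(A)$ and set $\alpha := \min_{\lambda \in \eig(A)}|\lambda-\mu| > 0$, which we intend to show satisfies the displayed inequality.

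First I would extract the standard resolvent lower bound: since $(A+E)v = \mu v$ for some nonzero $v$, we have $v = -(A-\mu I)^{-1} E v$, which forces $\|(A-\mu I)^{-1}\|_2 \geq 1/\|E\|_2$. Next I would use the Schur decomposition $A = Q(D+N)Q^*$ to rewrite $A - \mu I = Q(\Delta + N)Q^*$, where $\Delta := D - \mu I$ is diagonal with entries $\lambda_i - \mu$ (so $\|\Delta^{-1}\|_2 = 1/\alpha$) and $\|N\|_2 = \delta(A)$.

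The key structural observation is that $\Delta^{-1} N$ is strictly upper triangular, hence nilpotent with $(\Delta^{-1}N)^n = 0$. This yields the finite Neumann expansion
\begin{equation*}
(\Delta + N)^{-1} = (I + \Delta^{-1} N)^{-1}\Delta^{-1} = \sum_{k=0}^{n-1} (-\Delta^{-1}N)^k \,\Delta^{-1},
\end{equation*}
and therefore
\begin{equation*}
\frac{1}{\|E\|_2} \leq \|(A-\mu I)^{-1}\|_2 \leq \sum_{k=0}^{n-1} \|\Delta^{-1}\|_2^{k+1} \|N\|_2^k = \frac{1}{\alpha}\sum_{k=0}^{n-1}\left(\frac{\delta(A)}{\alpha}\right)^k.
\end{equation*}

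Finally, setting $t := \alpha/\delta(A)$ and multiplying through by $t^{n-1}\alpha$ converts the geometric sum on the right into $\sum_{j=0}^{n-1} t^j$, producing
\begin{equation*}
\frac{t^{n}\,\delta(A)}{\sum_{j=0}^{n-1} t^j} \leq \|E\|_2,
\end{equation*}
which is exactly the claimed inequality with $\lambda$ chosen as the eigenvalue of $A$ achieving $|\lambda-\mu| = \alpha$. I do not expect a serious obstacle: the only nontrivial point is spotting that $\Delta^{-1}N$ is nilpotent so the Neumann series terminates at $n-1$, after which the rearrangement is routine algebra.
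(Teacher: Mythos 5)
Your proof is correct and follows essentially the same route as the paper's: both start from the resolvent lower bound $\|(A-\mu I)^{-1}\|_2 \ge 1/\|E\|_2$, pass to the Schur form $D+N$, expand $(\Delta+N)^{-1}$ as the finite Neumann sum $\sum_{k=0}^{n-1}(-\Delta^{-1}N)^k\Delta^{-1}$ using nilpotency of the strictly upper triangular part, bound term-by-term to get $\|(A-\mu I)^{-1}\|_2 \le \frac{1}{\alpha}\sum_{k=0}^{n-1}(\delta(A)/\alpha)^k$, and rearrange. The only cosmetic difference is that the paper phrases the initial resolvent bound via the singular factorization $A-\mu I+E=(A-\mu I)(I+(A-\mu I)^{-1}E)$ rather than the eigenvector relation $v=-(A-\mu I)^{-1}Ev$, which are equivalent.
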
 

\begin{proof} WLOG assume that $ \mu \notin \eig(A).$ Since $$ A-\mu I +E = (A-\mu I) (I + (A-\mu I)^{-1} E)$$ is  singular and  $ Q^* AQ = D+N$, it follows that 
$ 1/\|(D+N-\mu I)^{-1}\|_2 \leq \|E\|_2.$ 

\von  Set $ \alpha := |\lam - \mu| = \dist(\mu, \eig(A)).$ Now
$$  (D- \mu I + N) ^{-1} = \sum^{n-1}_{k=0} [(\mu I-D)^{-1} N]^k (D- \mu I)^{-1} $$ yields  
$\| (D + N - \mu I)^{-1} \|_2 \leq \frac{1}{\alpha} \sum^{n-1}_{k=0} \left( \frac{ \delta(A)}{\alpha} \right)^k.$ 

Next, $\frac{1}{\alpha} \sum^{n-1}_{k=0} \left( \frac{ \delta(A)}{\alpha} \right)^k =  \frac{1}{\alpha}  \left(\frac{ \delta(A)}{\alpha} \right)^{n-1} \sum^{n-1}_{k=0} \left(  \frac{\alpha}{\delta(A)}\right)^k $ yields 
$$ \|E\|_2 \geq \|(D +N- \mu I)^{-1}\|_2^{-1} \geq  \frac{\frac{ \alpha ^n}{\delta(A)^{n-1}}}{ \sum^{n-1}_{k=0} \left(  \frac{\alpha}{\delta(A)}\right)^k}$$which gives the desired bound.
\end{proof}

\vone 

Define $\phi_m(x) :=1 + x+ \ldots + x^{m-1}$  for  $ x >0$. Then $ \phi_m(x)  \leq (1 +x)^{m-1} $ for all $ x > 0.$  Set $ \kappa(\lam, \mu, A) := \delta(A)^{m-1}\phi_m\left( |\lam- \mu|/{\delta(A)}\right).$ Then by Theorem~\ref{hen}, we have the following result. 
 \vone 
 
 \begin{corollary} 
 	 Let $ A, E \in \C^{n\times n}$ and $ \mu \in \eig(A+E).$  Then there is a $ \lam \in \eig(A)$ such that 
 	$$ \frac{ \left( \frac{ |\lam - \mu|}{\delta(A)} \right)^n} { \left(1+ \frac{ |\lam - \mu|}{\delta(A)}\right)^{n-1}}  \leq \frac{ \|E\|_2}{\delta(A)} \; \text{ and } \; |\lam-\mu|^n \leq \kappa(\lam, \mu, A) \|E\|_2.
 	$$	
 \end{corollary}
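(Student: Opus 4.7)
The plan is to observe that the stated corollary is a direct algebraic consequence of Theorem~\ref{hen} together with the elementary inequality $\phi_n(x) \le (1+x)^{n-1}$ noted just before the statement. First I would invoke Theorem~\ref{hen} to produce a $\lam \in \eig(A)$ with
$$\frac{(|\lam-\mu|/\delta(A))^n}{\phi_n(|\lam-\mu|/\delta(A))} \le \frac{\|E\|_2}{\delta(A)},$$
where I have merely rewritten the geometric sum $1 + (|\lam-\mu|/\delta(A)) + \cdots + (|\lam-\mu|/\delta(A))^{n-1}$ that appears in the denominator of Theorem~\ref{hen} as $\phi_n(|\lam-\mu|/\delta(A))$.

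For the first of the two claimed bounds, the key step is to enlarge the denominator by replacing $\phi_n(x)$ with $(1+x)^{n-1}$, where $x := |\lam-\mu|/\delta(A)$. The inequality $\phi_n(x) \le (1+x)^{n-1}$ for $x \ge 0$ follows at once from the binomial expansion $(1+x)^{n-1} = \sum_{k=0}^{n-1} \binom{n-1}{k} x^k$, since each coefficient $\binom{n-1}{k} \ge 1$. Enlarging the denominator decreases the ratio, so
$$\frac{(|\lam-\mu|/\delta(A))^n}{(1+|\lam-\mu|/\delta(A))^{n-1}} \le \frac{(|\lam-\mu|/\delta(A))^n}{\phi_n(|\lam-\mu|/\delta(A))} \le \frac{\|E\|_2}{\delta(A)},$$
giving the first asserted inequality.

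For the second inequality I would simply clear denominators in the bound provided by Theorem~\ref{hen}. Multiplying both sides by $\delta(A)\,\phi_n(|\lam-\mu|/\delta(A))$ produces $|\lam-\mu|^n/\delta(A)^{n-1} \le \phi_n(|\lam-\mu|/\delta(A))\,\|E\|_2$, and rearranging gives $|\lam-\mu|^n \le \delta(A)^{n-1}\phi_n(|\lam-\mu|/\delta(A))\,\|E\|_2 = \kappa(\lam,\mu,A)\,\|E\|_2$ by the very definition of $\kappa(\lam,\mu,A)$. There is no real obstacle here: the corollary is a two-line algebraic repackaging of Theorem~\ref{hen}, once the binomial comparison $\phi_n(x) \le (1+x)^{n-1}$ is in hand.
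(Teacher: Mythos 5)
Your proof is correct and follows exactly the route the paper intends: the paper states the corollary immediately after noting $\phi_m(x)\le(1+x)^{m-1}$ and defining $\kappa(\lam,\mu,A)$, and simply says ``by Theorem~\ref{hen}'' without spelling out the two algebraic steps you supply. Both your denominator-enlargement via the binomial inequality and your clearing of denominators to recover $|\lam-\mu|^n \le \kappa(\lam,\mu,A)\|E\|_2$ are the intended (and straightforward) manipulations.
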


 Let $ X\in \C^{n\times n}$ be such that $ A = X J X^{-1}$, where $ J$ is the Jordan canonical form (JCF) of $A.$ Then $\delta(J) = 1$. Hence Theorem~\ref{hen} applied to $J$ yields the following result. \vone 
 
 \begin{corollary}  Let $ X^{-1} A X =  J $ be the JCF of $A$ and  $m$ be the size of the largest Jordan block in $J.$ Let $ \mu \in \eig(A+E).$ Then there is a $ \lam \in \eig(A)$ such that 
 	$$ \frac{   |\lam - \mu|^m}{ \left(1+  |\lam - \mu| \right)^{m-1}} \leq  \frac{  |\lam - \mu|^m}{ \left(1+  |\lam - \mu| + \cdots + |\lam- \mu|^{m-1}\right)}  \leq \|X^{-1}E X\|_2.
 	$$	Further, setting $\kappa(\lam, \mu, A) := \phi_m(|\lam-\mu|),$ we have  $|\lam-\mu|^m \leq \kappa(\lam, \mu, A) \|X^{-1}EX\|_2.$
 \end{corollary}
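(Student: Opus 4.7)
The plan is to transfer the problem to the Jordan form via the similarity $X$ and then retrace the proof of Theorem~\ref{hen} applied to $J$, exploiting the fact that the nilpotent part of $J$ has nilpotency index at most $m$ (rather than $n$), which tightens the resulting bound.

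Since $X^{-1}(A+E)X = J + X^{-1}EX$, the two matrices are similar and share the same spectrum, so $\mu \in \eig(J + X^{-1}EX)$. The Jordan form $J$ is already upper triangular, so its Schur decomposition is trivial (take $Q=I$) and we write $J = D + N$ with $D$ diagonal, carrying the eigenvalues of $A$, and $N$ the strictly upper triangular nilpotent piece consisting of the superdiagonal $1$'s inside each Jordan block. Two facts about $N$ are crucial: $\|N\|_2 \leq 1$ (the norm of any shift block is at most $1$, with equality as soon as some block has size $\geq 2$), and $N^m = 0$ because $m$ bounds every block size.

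Assume without loss of generality that $\mu \notin \eig(A)$, and pick $\lambda \in \eig(A)$ achieving $\alpha := |\lambda - \mu| = \dist(\mu,\eig(A))$. Then the Neumann-type expansion used in the proof of Theorem~\ref{hen} now terminates at $k = m-1$:
$$ (J - \mu I)^{-1} \;=\; \sum_{k=0}^{m-1} \bigl[(\mu I - D)^{-1} N\bigr]^k (D - \mu I)^{-1}. $$
Taking $2$-norms and using $\|(D-\mu I)^{-1}\|_2 = 1/\alpha$ together with $\|N\|_2 \leq 1$ gives $\|(J-\mu I)^{-1}\|_2 \leq \phi_m(\alpha)/\alpha^m$. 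Since $\mu \in \eig(J + X^{-1}EX)$, the matrix $J - \mu I + X^{-1}EX$ is singular, which forces $\|(J-\mu I)^{-1}\|_2\,\|X^{-1}EX\|_2 \geq 1$ and hence $\alpha^m/\phi_m(\alpha) \leq \|X^{-1}EX\|_2$. The first inequality of the corollary is immediate from $\phi_m(x) \leq (1+x)^{m-1}$ noted just before the statement, and the last assertion is only a rewriting.

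The only step that requires genuine thought is the sharpened Neumann expansion: the gain from $n-1$ terms down to $m-1$ terms is exactly what makes the bound depend on the size of the largest Jordan block rather than on the matrix dimension. Everything else is bookkeeping or a direct use of results already in hand; in particular, the degenerate case $m=1$ (where $N=0$ and $\phi_1 \equiv 1$) falls out of the same formula and recovers the Bauer--Fike bound $|\lambda-\mu|\leq \|X^{-1}EX\|_2$.
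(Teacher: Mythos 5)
Your proof is correct and follows the same route the paper intends: block-diagonalize via $X$, write $J = D + N$, and redo the Neumann expansion of $(J-\mu I)^{-1}$ from the proof of Theorem~\ref{hen}, with the one essential refinement that $N^m = 0$ (the nilpotency index is the size of the largest Jordan block, not the matrix dimension), so the sum terminates after $m$ terms and yields exponent $m$ rather than $n$. You have also correctly noted that the paper's phrase "Theorem~\ref{hen} applied to $J$" is slightly elliptical, since a literal application would give exponent $n$; the termination of the series is exactly the point that must be supplied, and you supply it.
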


\vone 

The bound $  \displaystyle {\frac{   |\lam - \mu|^m}{ \left(1+  |\lam - \mu| \right)^{m-1}} \leq \| X^{-1} EX\|_2}$ is proved in~\cite{kahan} by following an entirely  different method and the bound $ \displaystyle{\frac{  |\lam - \mu|^m}{ \left(1+  |\lam - \mu| + \cdots + |\lam- \mu|^{m-1}\right)}  \leq \|X^{-1}E X\|_2}$ is deduced from Henrici theorem in \cite[Theorem~1.12, p.174]{stewbook}. For $m > 1,$ all these bounds suffer from the same defects, namely, that the bounds hold only when $|\lam-\mu|^m$ is divided by  $\phi_m(|\lam-\mu|) $ and that $\|X^{-1}EX\|_2$ is eigenvalue agnostic, that is, $\|X^{-1} EX\|_2$ can be very large irrespective of the ascent $m$ of the eigenvalue $\lam$.

\vone 
Of course, better bounds can be obtained under additional assumptions on $A.$  For instance, if $A$ is normal, that is,  if $AA^* = A^* A$ then we have the following result for the Frobenius norm  $\|A\|_F := \sqrt{ \tr(A^*A) }.$

\vone \begin{theorem}[Hoffman-Wielandt, \cite{bhatiabook, stewbook}] Let $A, B \in \C^{n\times n} $  be normal with eigenvalues $\lam_1, \ldots, \lam_n$  and $\mu_1, \ldots, \mu_n$, respectively. Then there
exists a permutation $ \tau \in S_n$  such that $$ \sqrt{\sum^n_{j=1} |\lam_j - \mu_{\tau(j)}|^2} \leq  \|A-B\|_F.$$ \end{theorem}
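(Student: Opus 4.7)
The plan is to reduce the problem to a minimization over doubly stochastic matrices and invoke the Birkhoff--von Neumann theorem. Since $A$ and $B$ are normal, they are unitarily diagonalizable: write $A = U D_A U^*$ and $B = V D_B V^*$ with $U, V$ unitary, $D_A = \diag(\lam_1, \ldots, \lam_n)$, $D_B = \diag(\mu_1, \ldots, \mu_n)$. Unitary invariance of $\|\cdot\|_F$ gives
\[ \|A-B\|_F^2 = \|D_A - W D_B W^*\|_F^2, \quad W := U^* V. \]

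Next I would expand the right-hand side via $\|X\|_F^2 = \tr(X^*X)$. Writing $s_{ij} := |w_{ij}|^2$, a direct computation (using $(WD_B W^*)_{ii} = \sum_j s_{ij}\mu_j$) yields
\[ \|A-B\|_F^2 \;=\; \sum_{i=1}^n|\lam_i|^2 + \sum_{j=1}^n|\mu_j|^2 - 2\,\mathrm{Re}\sum_{i,j} s_{ij}\,\overline{\lam_i}\mu_j. \]
Since $W$ is unitary, $S = (s_{ij})$ is doubly stochastic, i.e.\ $\sum_i s_{ij} = \sum_j s_{ij} = 1$. Hence $\sum_{i,j} s_{ij}|\lam_i|^2 = \sum_i |\lam_i|^2$ and similarly for $\mu_j$, so the identity collapses to the clean form
\[ \|A-B\|_F^2 \;=\; \sum_{i,j=1}^n s_{ij}\,|\lam_i - \mu_j|^2. \]

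Finally I would apply the Birkhoff--von Neumann theorem: the set $\Omega_n$ of $n\times n$ doubly stochastic matrices is the convex hull of the $n!$ permutation matrices $\{P_\tau : \tau \in S_n\}$. The functional $S \mapsto \sum_{i,j} s_{ij} |\lam_i - \mu_j|^2$ is linear in $S$, so its minimum over $\Omega_n$ is attained at a vertex $P_\tau$, giving
\[ \|A-B\|_F^2 \;\geq\; \min_{\tau \in S_n}\sum_{j=1}^n |\lam_j - \mu_{\tau(j)}|^2, \]
from which the theorem follows.

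The main obstacle is the Birkhoff--von Neumann step: one must recognize that the unistochastic matrix $(|w_{ij}|^2)$ lies in $\Omega_n$ and exploit the linearity of the objective in $S$ to push the minimum to a permutation. The rest of the argument is essentially a computation carried out after unitarily diagonalizing $A$ and $B$; normality is used precisely to obtain that simultaneous unitary diagonalization so that only a single unitary $W = U^*V$ appears in the cross-term.
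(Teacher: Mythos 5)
Your proof is correct, and it is the classical Birkhoff--von Neumann argument for the Hoffman--Wielandt inequality. Note, however, that the paper does not actually supply a proof of this theorem; it is stated with citations to \cite{bhatiabook, stewbook} and used without proof. Your argument is sound at every step: normality gives the simultaneous unitary diagonalizations, the Frobenius norm's unitary invariance reduces the problem to $\|D_A - W D_B W^*\|_F^2$ with a single unitary $W=U^*V$, the trace expansion together with double stochasticity of $S=(|w_{ij}|^2)$ collapses to the identity $\|A-B\|_F^2 = \sum_{i,j} s_{ij}|\lam_i-\mu_j|^2$, and since the objective is linear (indeed affine) in $S$ over the compact convex polytope $\Omega_n$, its minimum is attained at a vertex, i.e.\ a permutation matrix, giving the desired permutation $\tau$. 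This is essentially the proof found in Bhatia's and Stewart--Sun's books that the paper cites, so you have filled in the omitted argument in the standard way.
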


If we relax the normality assumption on $B$ then the following result holds. \vone 

\begin{theorem}[Sun, \cite{sun4}]  Let $A, B \in \C^{n\times n} $ with  eigenvalues $\lam_1, \ldots, \lam_n$  and $\mu_1, \ldots, \mu_n$, respectively. If $A$ is normal then 
there
exists $ \tau \in S_n$  such that $$ \sqrt{\sum^n_{j=1} |\lam_j - \mu_{\tau(j)}|^2} \leq  \sqrt{n} \; \|A-B\|_F.$$
Also, $d_m(\eig(A), \eig(B)) \leq n \|A-B\|_2.$
\end{theorem}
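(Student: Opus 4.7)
The plan is to exploit the normality of $A$ via its unitary diagonalisation, reduce to an entrywise identity between $(\lam_j - \mu_k)$ and entries of a unitary transform of $A-B$, and then extract a permutation from the resulting weighted bipartite assignment problem. First I would make a reduction: by continuity of the eigenvalues and the fact that diagonalisable matrices are dense, it suffices to prove the bound under the assumption that $B$ is diagonalisable; the general case follows by passing to a limit.

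Next, I write $A = UDU^*$ with $U$ unitary and $D = \diag(\lam_1,\ldots,\lam_n)$, and $BV = VM$ with $V = [v_1\mid\cdots\mid v_n]$ having unit columns and $M = \diag(\mu_1,\ldots,\mu_n)$. From $(A-B)V = AV - VM$, left-multiplying by $U^*$ and setting $W := U^*V$ gives the entrywise identity
\begin{equation*}
(\lam_j - \mu_k)\, W_{jk} \;=\; \bigl(U^*(A-B)V\bigr)_{jk}.
\end{equation*}
Since each $v_k$ is a unit vector, $\|V\|_2 \leq \|V\|_F = \sqrt{n}$, hence summing squared moduli yields
\begin{equation*}
\sum_{j,k} |\lam_j - \mu_k|^2 \, |W_{jk}|^2 \;=\; \|U^*(A-B)V\|_F^2 \;\leq\; \|A-B\|_F^2\,\|V\|_2^2 \;\leq\; n\,\|A-B\|_F^2.
\end{equation*}
In particular, taking a single column $k$ and using $\sum_j|W_{jk}|^2 = 1$, one obtains the pointwise estimate $\dist(\mu_k,\eig(A))^2 \leq \|(A-B)v_k\|_2^2 \leq \|A-B\|_2^2$, recovering a Bauer--Fike-style inequality.

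The matrix $Q := (|W_{jk}|^2)$ is column-stochastic with total mass $n$, and the goal is to produce a permutation $\tau\in S_n$ with $\sum_k |\lam_{\tau(k)} - \mu_k|^2 \leq \sum_{j,k} Q_{jk}|\lam_j - \mu_k|^2 \leq n\,\|A-B\|_F^2$. When $B$ is also normal, $W$ is unitary, $Q$ is doubly stochastic, and Birkhoff--von Neumann immediately delivers $\tau$ (this recovers the Hoffman--Wielandt bound with the sharper constant~$1$). For general $B$ the column-stochastic (but not row-stochastic) nature of $Q$ forces a more delicate combinatorial argument: I would invoke a K\"onig/Hall marriage argument on the bipartite graph joining $\lam_j$ and $\mu_k$ whenever $|\lam_j - \mu_k|^2 \leq n\,\|A-B\|_F^2$, using the uniform pointwise bound $\dist(\mu_k,\eig(A)) \leq \|A-B\|_F$ together with the weighted inequality of the previous step to verify Hall's condition.

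Finally, the spectral-norm bound drops out of the Frobenius one by $\|A-B\|_F \leq \sqrt{n}\,\|A-B\|_2$:
\begin{equation*}
d_m(\eig(A),\eig(B))^2 \;\leq\; \sum_k |\lam_{\tau(k)} - \mu_k|^2 \;\leq\; n\,\|A-B\|_F^2 \;\leq\; n^2\,\|A-B\|_2^2,
\end{equation*}
so taking square roots gives $d_m(\eig(A),\eig(B)) \leq n\,\|A-B\|_2$. The main obstacle is the matching step: because the eigenvectors of a non-normal $B$ are not orthonormal, $Q$ fails to be row-stochastic and the standard Birkhoff argument breaks down; the combinatorial construction of $\tau$ must simultaneously use the global weighted estimate from step~2 and the pointwise Bauer--Fike-type bound made available by the normality of $A$. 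Everything else is either a routine reduction, a direct eigenvector computation, or the passage from the Frobenius to the spectral-norm estimate.
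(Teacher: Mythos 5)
The paper does not actually prove this theorem: it states it and cites Sun~\cite{sun4}. So there is no internal proof to compare against; I will assess your argument on its own merits.

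Your setup is clean and correct. Writing $A = UDU^*$ and $BV = VM$ with unit eigenvector columns, the identity $(\lambda_j - \mu_k)W_{jk} = (U^*(A-B)V)_{jk}$ with $W = U^*V$ is right, and the estimate $\sum_{j,k}|W_{jk}|^2|\lambda_j-\mu_k|^2 = \|(A-B)V\|_F^2 \leq \|A-B\|_F^2\|V\|_2^2 \leq n\|A-B\|_F^2$ holds; so does the column-wise Bauer--Fike-type bound $\dist(\mu_k,\eig(A))\leq\|(A-B)v_k\|_2\leq\|A-B\|_2$. The observation that $Q=(|W_{jk}|^2)$ is column-stochastic (total mass $n$) is also correct, and when $B$ is normal this does recover Hoffman--Wielandt via Birkhoff.

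The gap is exactly where you flag it, and the repair you propose does not close it. You want a permutation $\tau$ with $\sum_k|\lambda_{\tau(k)}-\mu_k|^2 \leq \sum_{j,k}Q_{jk}|\lambda_j-\mu_k|^2$. That ``dominated by the weighted average'' conclusion is a property of \emph{doubly} stochastic $Q$ (Birkhoff--von Neumann: a doubly stochastic $Q$ is a convex combination of permutation matrices, so the linear functional $Q \mapsto \sum Q_{jk}c_{jk}$ is minimized at some permutation). A merely column-stochastic $Q$ gives no such conclusion: if all columns of $Q$ concentrate on a single row $j_0$, the weighted sum controls only $\sum_k c_{j_0,k}$ and says nothing about $c_{j,k}$ for $j\neq j_0$, yet any permutation uses $n-1$ such uncontrolled entries. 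The K\"onig/Hall substitute you sketch also falls short: even granting that Hall's condition holds for the bipartite graph with edges $\{(j,k): |\lambda_j-\mu_k|^2 \leq n\|A-B\|_F^2\}$ (which you have not verified --- the pointwise bound only checks Hall for singleton sets), a perfect matching in that graph yields $\sum_k|\lambda_{\tau(k)}-\mu_k|^2 \leq n\cdot n\|A-B\|_F^2 = n^2\|A-B\|_F^2$, an extra factor of $n$. So you would get $\sqrt{\sum_k|\lambda_{\tau(k)}-\mu_k|^2}\leq n\|A-B\|_F$ rather than $\sqrt{n}\,\|A-B\|_F$, and the derived spectral-norm bound would degrade to $d_m \leq n^{3/2}\|A-B\|_2$ instead of $n\|A-B\|_2$. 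The passage from Frobenius to spectral norm via $\|A-B\|_F\leq\sqrt{n}\|A-B\|_2$ is fine once the Frobenius estimate is in hand, but the Frobenius estimate itself remains open. You need a genuinely different matching argument (Sun's own proof does not run through a column-stochastic $Q$) or a mechanism that exploits the invertibility/near-orthogonality of $W$ beyond mere column-stochasticity.
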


\vone 
For simple eigenvalues, asymptotic and non-asymptotic local bounds provide sharp estimates of the errors in the perturbed eigenvalues. Let $(\lam, u, v)$ be a simple eigentriple of $A.$ Then $\lam$ is a simple zero of $p(z) := \det(zI-A),$ that is, $p'(\lam) \neq 0,$ where $p'(z)$ is the derivative of $p(z).$


 Define $ F : \C^{n\times n} \times \C \longrightarrow \C$ by $ F(X, z) := \det(zI - X).$  Since $\lam$ is a simple eigenvalue of $A$, we have $ \partial_zF(A, \lam) = p'(\lam) \neq 0$, where   $ \partial_zF(A, \lam)$ is the partial derivative of $F$ with respect to $z$ evaluated at $(A, \lam).$ Therefore, the implicit function theorem together with Jocobi formula for the derivative of the determinant yields the following result, see~\cite{alamela} for details. 

\von
\begin{theorem}\cite{alamela} Let $(\lam_A, u, v)$ be a simple eigentriple of $A.$ Then  there is an open set  $\Omega \subset \cnn$ containing $A$  and a
	smooth function $\lam: \Omega \rar \C$ such that $\lam(A) = \lam_A$ and $\lam(X)$ is a simple eigenvalue of $X$ for all
	$X \in \Omega.$ Further,  we have  \beano \lam(A+\Delta A) &=& \lam(A) + \inp{\Delta A}{\nabla\lam(A)} + {\cal O}(\|\Delta A\|_2^2) \\
	&=& \lam(A) + \inp{\Delta A}{uv^*/v^*u} + {\cal O}(\|\Delta A\|_2^2)\eeano
	and the first order bound  $|\lam(A+\Delta A) - \lam(A)| \lesssim \cond(\lam, A) \|\Delta A\|_2$ for small $\|\Delta A\|_2$, where $ \nabla \lam(A)$ is gradient and $\cond(\lam, A) := \|u\|_2\|v\|_2/{|u^*v|}$ is the condition number of $\lam.$ 
	
\end{theorem}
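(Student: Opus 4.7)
The plan is to apply the implicit function theorem to the scalar function $F : \cnn \times \C \rightarrow \C$ defined by $F(X,z) := \det(zI-X)$, which is a polynomial in the entries of $X$ and in $z$, hence real analytic. Since $\lam_A$ is simple, $p(z):=F(A,z)$ has $\lam_A$ as a simple root, so $\partial_z F(A,\lam_A)=p'(\lam_A)\neq 0$. The implicit function theorem then produces an open neighborhood $\Omega_0\subset\cnn$ of $A$ and a smooth (in fact analytic) map $\lam:\Omega_0\rightarrow\C$ with $\lam(A)=\lam_A$ and $F(X,\lam(X))\equiv 0$ on $\Omega_0$. By continuity of $\partial_z F$, shrinking $\Omega_0$ to some $\Omega$ preserves $\partial_z F(X,\lam(X))\neq 0$, which is precisely the statement that $\lam(X)$ is a simple eigenvalue of $X$ for every $X\in\Omega$.

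To identify $\nabla\lam(A)$, I would differentiate the defining relation $F(X,\lam(X))=0$ at $X=A$ in an arbitrary direction $\Delta\in\cnn$. Combining the chain rule with Jacobi's formula $\partial_X\det(zI-X)\cdot\Delta = -\tr(\adj(zI-X)\,\Delta)$ yields
$$d\lam(A)\cdot\Delta \;=\; \frac{\tr(\adj(\lam_A I-A)\,\Delta)}{p'(\lam_A)}.$$
The crucial step, and the one I expect to be the main obstacle, is to express $\adj(\lam_A I-A)$ in terms of $u$ and $v$. Simplicity forces $\lam_A I-A$ to have rank $n-1$, so $\adj(\lam_A I-A)$ is rank one; the identities $(\lam_A I-A)\adj(\lam_A I-A)=\adj(\lam_A I-A)(\lam_A I-A)=0$ pin its column space to $\span(v)$ and its row space to $\span(u^*)$, so $\adj(\lam_A I-A)=\alpha\,vu^*$ for some scalar $\alpha$. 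Reducing $A$ to Jordan form gives the standard identity $\tr(\adj(\lam_A I-A))=p'(\lam_A)$, which forces $\alpha=p'(\lam_A)/(u^*v)$. Substituting, and using $\inp{X}{Y}=\tr(Y^*X)$ together with $\overline{v^*u}=u^*v$,
$$d\lam(A)\cdot\Delta \;=\; \frac{u^*\Delta v}{u^*v} \;=\; \inp{\Delta}{uv^*/v^*u},$$
so $\nabla\lam(A)=uv^*/(v^*u)$.

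The quadratic Taylor expansion then follows by setting $\Delta=\Delta A$ and using smoothness of $\lam$ to absorb second- and higher-order terms into $\mathcal{O}(\|\Delta A\|_2^2)$. The first-order bound falls out of Cauchy-Schwarz applied twice: $|u^*\Delta A\,v|\leq\|u\|_2\|\Delta A\,v\|_2\leq\|u\|_2\|v\|_2\|\Delta A\|_2$, giving $|\lam(A+\Delta A)-\lam_A|\leq\cond(\lam,A)\|\Delta A\|_2+\mathcal{O}(\|\Delta A\|_2^2)$. Apart from the adjugate identification, every step is a routine consequence of the implicit function theorem and Jacobi's formula.
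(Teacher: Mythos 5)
Your proof is correct and follows precisely the route the paper indicates (which it credits to \cite{alamela}): apply the implicit function theorem to $F(X,z):=\det(zI-X)$ using $\partial_z F(A,\lam_A)=p'(\lam_A)\neq 0$, then identify the gradient via Jacobi's formula and the rank-one adjugate identity $\adj(\lam_A I-A)=\frac{p'(\lam_A)}{u^*v}\,vu^*$. The adjugate identification, conjugation bookkeeping with $\inp{\cdot}{\cdot}$, and the Cauchy--Schwarz bound are all handled correctly.
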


\vone 

A non-asymptotic local bound of a simple  eigenvalue of $A$ can be obtained from and Theorem~\ref{demmel} as follows. Let $ (\lam, u, v)$ be a simple eigentriple of $A.$ Then there is a unitary matrix $U$ such that $ A= U \bmatrix{ \lam & c\\ 0 & A_2} U^*$ and $\sep(\lam, A_2) = \sig_{\min}(A_2-\lam I) >0,$ where $\sig_{\min}(X)$ is the smallest singular value of $X.$ Further,  $ P := vu^*/{u^*v}$ is the spectral projection of $A$ corresponding to $\lam$. Set $ p:= \|P\|_2.$ We have seen in  the discussion leading to Theorem~\ref{demmel} that  
$$ \kappa := \min\{ \|S\|_2\|S^{-1}\|_2 :  S^{-1}AS = \diag(\lam, A_2)\}  = p+\sqrt{p^2-1}$$  and $\eig_{\ep}(A) \subset \eig_{\phi(\ep)} (\diag(\lam, A_2)) = B[\lam, \phi(\ep)] \cup \eig_{\phi(\ep)}(A_2),$ where $ \phi(\ep) := \kappa \ep$ and $B[\lam, r] :=\{ z \in \C : |\lam-z| \leq r\}.$ Consequently,  if $ \ep < \phi^{-1}(\sep(\lam, A_2)/2)$ then the disk $B[\lam, \phi(\ep)]$ is disjoint from $\eig_{\phi(\ep)}(A_2).$ Hence  $ A+E$ has a simple eigenvalue $\lam_E$ in $B[\lam, \phi(\|E\|_2)] $ when $ \|E\| < \phi^{-1}(\sep(\lam, A_2)/2).$ Therefore, by Theorem~\ref{demmel} we have the following result. 

\vone 
\begin{theorem} If $ \|E\|_2 < \frac{ \sep(\lam, A_2)}{2 (p+\sqrt{p^2-1})}$ then $ A+E$ has a simple eigenvalue $\lam_E$ such that $$|\lam - \lam_E| \leq  (p+\sqrt{p^2-1}) \|E\|_2.$$ Further, if $v_E$ is an eigenvector of $A+E$ corresponding to $\lam_E$ then 
	$$ \tan\theta( v, v_E) \leq \frac{ 2 (p+\sqrt{p^2-1}) \|E\|_2}{\sep(\lam, A_2)},$$ where $\theta(v, v_E)$ is the acute angle between $v$ and $v_E.$
\end{theorem}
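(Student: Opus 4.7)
The plan is to reduce the statement to a direct application of Theorem~\ref{demmel} by first putting $A$ into block upper-triangular form in which the simple eigenvalue $\lam$ sits as a $1\times 1$ diagonal block. Since $(\lam,u,v)$ is a simple eigentriple of $A$, a standard Schur-type unitary $U$ produces $U^*AU=\bmatrix{\lam & c\\ 0 & A_2}$ with $\lam\notin\eig(A_2)$, so $\sep(\lam,A_2)=\sigma_{\min}(A_2-\lam I)>0$. In this basis the right eigenvector of $A$ associated with $\lam$ is $e_1$, and the spectral projection is $P=\bmatrix{1 & -R\\ 0 & 0}$ with $R=c(A_2-\lam I)^{-1}$, giving $\|P\|_2=\sqrt{1+\|R\|_2^2}=p$. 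Replacing $E$ by $U^*EU$ preserves the spectral norm, so the hypothesis $\|E\|_2<\sep(\lam,A_2)/(2(p+\sqrt{p^2-1}))$ is exactly the hypothesis $\|E\|_2<\phi^{-1}(s/2)$ of Theorem~\ref{demmel} with $A_{11}=\lam$, $A_{22}=A_2$, $s=\sep(\lam,A_2)$, and $\phi(\ep)=\kappa\ep$, where $\kappa:=p+\sqrt{p^2-1}$.

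With these identifications, Theorem~\ref{demmel}(a)--(b) immediately produces the perturbed eigenvalue. Indeed, part (b) shows $\eig(M)\subset\eig_{\phi(\|E\|_2)}(A_{11})$, and since $A_{11}=\lam$ is a scalar, $\eig_{\kappa\|E\|_2}(\lam)$ is simply the closed disk $\{z:|z-\lam|\leq\kappa\|E\|_2\}$. Thus the $1\times 1$ block $M=\lam+\widehat{E}_{11}+\widehat{E}_{12}X$ is itself an eigenvalue $\lam_E$ of $A+E$, and $|\lam-\lam_E|\leq\kappa\|E\|_2=(p+\sqrt{p^2-1})\|E\|_2$. Simplicity of $\lam_E$ in $A+E$ for the stated range of $\|E\|_2$ follows because the disk $\eig_{\kappa\|E\|_2}(\lam)$ is disjoint from $\eig_{\kappa\|E\|_2}(A_2)$ (by Theorem~\ref{demmel}(a)), so only one eigenvalue of $A+E$ (counted with multiplicity) lies there, matching the single eigenvalue $\lam$ of $A$ in that disk.

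For the eigenvector bound, I will invoke Theorem~\ref{demmel}(d) with $m=1$. That result gives the right invariant subspace $\mathcal{X}_E=\span(S\bmatrix{1\\X})$ of $A+E$ and bounds the angle against $\mathcal{X}=\span(e_1)$; after transforming back by $U$, these are $\span(v_E)$ and $\span(v)$ respectively, so $\theta(v,v_E)=\theta_{\max}(\mathcal{X},\mathcal{X}_E)$. Reading off the bound in Theorem~\ref{demmel}(d) together with $s=\sep(\lam,A_2)$ yields the desired estimate of the form $\tan\theta(v,v_E)\leq C\kappa\|E\|_2/\sep(\lam,A_2)$. The only place where care is needed is the constant: Theorem~\ref{demmel}(d) literally produces $2\kappa^2$ in the numerator because the factor $\|Z\|_2\leq\kappa\|X\|_2$ introduces an extra $\kappa$ when one passes from the invariant pair in the diagonalized coordinates back through the similarity $S$; matching the exact constant $2\kappa$ stated above requires either absorbing this factor into the $\|X\|_2$ bound $2\kappa\|E\|_2/s$ directly (which is valid when the eigenvector is already parametrized in the original Schur basis, so no further $S$-conjugation is needed) or replacing Theorem~\ref{demmel}(d) by the sharper Theorem~\ref{th:main}(b) applied to $U^*AU$, for which $\tan\theta_{\max}(\mathcal{X},\mathcal{X}_E)=\|X\|_2\leq 2\|E\|_2/s$.

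The main obstacle is thus not producing a bound but selecting the correct pair (reduction vs.\ block-diagonalization) so that the constant matches the announced one: using the unitary reduction alone is enough for the eigenvalue bound via the disk $\eig_{\kappa\|E\|_2}(\lam)$, while the eigenvector bound is most transparently obtained by applying Theorem~\ref{th:main}(b) in the Schur basis and then rescaling by the angular loss incurred in identifying $\kappa$-dependent quantities through $p=\cond(\lam,A)$. No new estimation is required beyond Theorems~\ref{inclusion}, \ref{th:main}, and \ref{demmel}; the proof is essentially a bookkeeping exercise specializing those results to the $1\times 1$ block.
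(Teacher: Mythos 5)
Your eigenvalue bound is correct and mirrors the paper's argument: after the unitary reduction to $\bmatrix{\lam & c\\ 0 & A_2}$, the pseudospectral inclusion $\eig_{\|E\|_2}(A)\subset B[\lam,\kappa\|E\|_2]\cup\eig_{\kappa\|E\|_2}(A_2)$ with $\kappa:=p+\sqrt{p^2-1}$, together with the disjointness of the two sets under the hypothesis, isolates a unique simple eigenvalue of $A+E$ in $B[\lam,\kappa\|E\|_2]$.

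For the eigenvector bound you have located a genuine discrepancy, but neither of your proposed repairs actually closes it. Theorem~\ref{demmel}(d) delivers $\tan\theta_{\max}(\mathcal{X},\mathcal{X}_E)\leq 2\kappa^{2}\|E\|_2/s$, one extra factor of $\kappa$ compared with the stated bound. Your first fix -- that the extra factor is dispensable "when the eigenvector is already parametrized in the original Schur basis" -- does not hold: the perturbed eigenvector in the Schur basis is $S\bmatrix{1\\X}$, not $\bmatrix{1\\X}$, and passing through the non-unitary block diagonalizer $S$ is exactly what produces the extra $\kappa$ via $\|Z\|_2\leq\kappa\|X\|_2$ in the proof of Theorem~\ref{demmel}(d); the unitary $U$ alone does not block-diagonalize $A$, so the $S$-step cannot be skipped. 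Your second fix -- invoking Theorem~\ref{th:main}(b) in the Schur basis -- is not available under the stated hypothesis: Theorem~\ref{th:main}(b) requires $4\|E\|_2(\|c\|_2+\|E\|_2)<s^2$, equivalently $\|E\|_2<s^2/(2(\|c\|_2+\sqrt{\|c\|_2^2+s^2}))$; but since $\|R\|_2\leq\|c\|_2/s$, one has $\kappa=p+\|R\|_2\leq(\|c\|_2+\sqrt{\|c\|_2^2+s^2})/s$ and therefore $s/(2\kappa)\geq s^2/(2(\|c\|_2+\sqrt{\|c\|_2^2+s^2}))$, so the assumed range of $\|E\|_2$ in the statement strictly contains perturbations lying outside the domain of Theorem~\ref{th:main}. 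In short, you correctly identified the gap but did not close it; and the paper's own one-line derivation (``by Theorem~\ref{demmel}'') likewise only supports the $\kappa^2$ numerator, so the first-power-of-$\kappa$ bound as stated is not justified by the route either of you takes.
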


\section{Perturbation of discrete eigenvalues}\label{operator}
Let $ A \in BL(X)$ and assume that the discrete spectrum $ \sig_d(A) $ is nonempty. For $ V \in BL(X)$,  consider the one parameter family of operators $ A(t) := A + t V$ for $ t \in \C.$ We analyze the effect of the perturbation $A(t)$ on the discrete eigenvalues of $A$ when $t$ varies in $\C.$ 

%
%
%
%
%
%
%

\vone 
Consider the resolvent operator   $ R(z) := (A- zI)^{-1} $ for $ z \in \rho(A)$.  Then we have the resolvent identity   $ R(z) - R(w) = (z-w) R(z) R(w)$  for  $ z, w \in \rho(A).$ Further, we have the following result. 

\vone
\begin{proposition}\cite{limbook2, chatelin} \label{propop} Let $ A \in BL(X).$ Then the following results hold.
\begin{itemize}  
	

	\item[(a)] Let $ B \in BL(X).$ If $A$ is invertible and $ r_{\sig}((A-B)A^{-1}) < 1$ then $B$ is invertible. Further, we have   
$$ B^{-1} = A^{-1} \sum^\infty_{j=0}  ((A-B)A^{-1} )^j = \sum^\infty_{j=0}  (A^{-1}(A-B) )^j A^{-1}.$$ 

	\item[(b)] Let $ z \in \rho(A)$ and $ B \in BL(X).$ If  $ r_{\sig}((A-B) R(z)) < 1$ then $ z \in \rho(B)$ and $$ (B-z I)^{-1} =  R(z) \sum^\infty_{j=0} [(A-B) R(z)]^j.$$
	
	
	\item[(c)] Let $ E \subset \rho(A)$ be closed and $ B \in BL(X).$ If  $  \max_{z\in E} \|(A-B)R(z)\| < 1$ then $ E \subset \rho(B).$

\end{itemize} 
\end{proposition}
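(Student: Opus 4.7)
The three parts are naturally ordered: (a) is the core, (b) is a specialization of (a), and (c) follows pointwise from (b). So my plan is to prove (a) first via a Neumann-type series argument, then deduce (b) and (c).

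For (a), I would first observe the factorization $B = A - (A - B) = A\bigl(I - A^{-1}(A-B)\bigr) = \bigl(I - (A-B)A^{-1}\bigr)A$. Set $T := (A-B)A^{-1}$ and $S := A^{-1}(A-B)$. By hypothesis $r_\sigma(T) < 1$, and by the Gelfand formula $r_\sigma(T) = \lim_{n\to\infty}\|T^n\|^{1/n}$, so choosing any $q$ with $r_\sigma(T) < q < 1$ we have $\|T^n\| \le q^n$ for all sufficiently large $n$. Hence the Neumann series $\sum_{j=0}^\infty T^j$ converges absolutely in $BL(X)$, and a telescoping calculation $(I-T)\sum_{j=0}^N T^j = I - T^{N+1}$ combined with $\|T^{N+1}\|\to 0$ shows $(I-T)^{-1} = \sum_{j=0}^\infty T^j$. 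Therefore $B = (I-T)A$ is invertible with $B^{-1} = A^{-1}(I-T)^{-1} = A^{-1}\sum_{j=0}^\infty T^j$, which is the first formula. The second formula will follow symmetrically by writing $B = A(I - S)$; the only thing to check is that $r_\sigma(S) < 1$ too, which holds because $S = A^{-1} T A$ is similar to $T$ and hence has the same spectrum.

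For (b), I would apply (a) with $A$ replaced by $A - zI$ (invertible since $z \in \rho(A)$) and $B$ replaced by $B - zI$. The difference $(A-zI) - (B-zI) = A - B$ is unchanged, and the condition $r_\sigma((A-B)R(z)) < 1$ is exactly the hypothesis of (a) applied to this pair. The formula then reads $(B-zI)^{-1} = R(z)\sum_{j=0}^\infty [(A-B)R(z)]^j$, as claimed, and in particular $z \in \rho(B)$.

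For (c), for each fixed $z \in E$ the universal inequality $r_\sigma\bigl((A-B)R(z)\bigr) \le \|(A-B)R(z)\| \le \max_{w\in E}\|(A-B)R(w)\| < 1$ allows (b) to conclude $z \in \rho(B)$. Since this holds for every $z \in E$, we obtain $E \subset \rho(B)$.

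The main obstacle is really only in (a): one has to resist the temptation to assume $\|T\| < 1$ and instead use only $r_\sigma(T) < 1$, which forces invocation of the Gelfand spectral-radius formula to control the tail of the Neumann series. The remaining parts are essentially bookkeeping, and the similarity argument $S = A^{-1} T A$ is the only minor twist needed to get both symmetric formulae in (a).
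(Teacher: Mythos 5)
Your proof is correct and follows the standard Neumann-series argument found in the cited references (the paper itself states Proposition~\ref{propop} as a known result from \cite{limbook2, chatelin} without reproducing a proof). The key points you handle correctly are: using Gelfand's formula rather than assuming $\|T\|<1$ to get absolute convergence of $\sum T^j$; the similarity $S = A^{-1}TA$ to transfer the spectral-radius bound and obtain the symmetric formula (one could equivalently observe $S^jA^{-1}=A^{-1}T^j$ directly); the substitution $A\mapsto A-zI$, $B\mapsto B-zI$ leaving $A-B$ unchanged for (b); and the elementary inequality $r_\sigma\le\|\cdot\|$ for (c).
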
 
\vone

\begin{remark} As a consequence of Proposition~\ref{propop}(c), it follows that the map $ A \longmapsto \sig(A)$ is upper semicontinuous. Indeed,  let $U \subset \C$ be an open set such that 
 $ \sig(A)\subset U$.  Then $ E := U^c$  is closed and by Proposition~\ref{propop} there is a $\delta >0$  such that $ \|A-B\| < \delta \Longrightarrow U^c \subset \rho(B)$ which in turn implies that $ \sig(B) \subset U.$ This shows that the set-valued map  $ A \longmapsto \sig(A)$ is upper semicontinuous. 

\end{remark} 

\vone

%
%
%

Now consider the one parameter family   $ A(t) := A + t V$ for $ t \in \C.$ Set  $ R(t, z) := ( A(t)-zI)^{-1}$ for $ z \in \rho(A(t)).$  
The function $ z \longmapsto r_{\sig}(VR(z))$ is upper semicontinuous (in fact, subharmonic)  and hence attends maximum on a compact subset of $\rho(A)$, see~\cite{chatelin, limbook2}.

\vone \begin{theorem} \cite{limbook2}\label{hol} Let $ t_0 \in \C$ and fix $ z \in \rho(A(t_0)).$ If $ |t-t_0| r_{\sig}(VR(t_0, z)) < 1 $ then $  z\in \rho(A(t))$ and  $$ R(t, z) = R(t_0, z) \sum^{\infty}_{n=0} [-VR(t_0, z)]^n (t-t_0)^n.$$ 
Thus, the function $ t \longmapsto R(t, z)$ is analytic in a neighbourhood $t_0$ for every fixed $ z \in \rho(A(t_0)).$ 

\vone
Let $E \subset \rho(A(t_0))$ be compact. Then the series for $ R(t, z)$ converges uniformly for $ z \in E$ and $t$ in any closed subset of the disk   $$ \mathbb{D}_E :=\{ t \in \C :  |t- t_0| < 1/{ \max_{z \in E} r_{\sig}(VR(t_0, z))}\}.$$
\end{theorem}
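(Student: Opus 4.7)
The plan is to reduce the invertibility of $A(t)-zI$ to invertibility of a perturbation of the identity, from which the series drops out immediately, and then to upgrade pointwise to uniform convergence on $E$ via a Cauchy integral estimate.

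First, I would record the algebraic factorization
\[
A(t) - zI \;=\; (A(t_0) - zI) + (t-t_0)V \;=\; \bigl[I + (t-t_0)VR(t_0,z)\bigr]\,(A(t_0) - zI),
\]
which is immediate from $R(t_0,z)(A(t_0)-zI)=I$. This reduces the question to invertibility of the bracketed factor. Setting $T := -VR(t_0,z)$, the hypothesis $|t-t_0|\,r_\sigma(VR(t_0,z))<1$ is exactly $r_\sigma((t-t_0)T)<1$, so the Neumann-type identity
\[
\bigl(I-(t-t_0)T\bigr)^{-1} \;=\; \sum_{n=0}^{\infty}\bigl((t-t_0)T\bigr)^n
\]
holds by Proposition~\ref{propop}(a) (or directly from $\lim_n\|T^n\|^{1/n}=r_\sigma(T)$ together with the root test). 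Substituting back and right-multiplying by $R(t_0,z)$ yields $z\in\rho(A(t))$ and
\[
R(t,z) \;=\; R(t_0,z)\sum_{n=0}^{\infty}[-VR(t_0,z)]^n\,(t-t_0)^n,
\]
which is a convergent power series in $t-t_0$; this proves analyticity in a neighborhood of $t_0$ for every fixed $z\in\rho(A(t_0))$.

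For the uniform statement on a compact $E\subset\rho(A(t_0))$, the map $z\mapsto VR(t_0,z)$ is norm-continuous and the spectral radius is upper semicontinuous, so $M:=\max_{z\in E}r_\sigma(VR(t_0,z))$ is attained and finite. Fix any closed subdisk $|t-t_0|\le\rho$ of $\mathbb{D}_E$ with $\rho M<1$. The delicate step, which I expect to be the main obstacle, is to obtain a uniform-in-$z$ bound on $\|[VR(t_0,z)]^n\|$, since $r_\sigma$ controls pointwise growth but not growth uniformly in a parameter.

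I plan to handle this via the Cauchy integral representation of operator powers. Choose $r$ with $M<r<1/\rho$. For each $z\in E$ the circle $\{|\zeta|=r\}$ lies in the resolvent set of $VR(t_0,z)$, and $(\zeta,z)\mapsto(\zeta I - VR(t_0,z))^{-1}$ is jointly continuous on the compact set $\{|\zeta|=r\}\times E$, so
\[
C \;:=\; \sup\bigl\{\|(\zeta I - VR(t_0,z))^{-1}\| : |\zeta|=r,\ z\in E\bigr\} \;<\; \infty.
\]
The Cauchy formula
\[
[VR(t_0,z)]^n \;=\; \frac{1}{2\pi i}\oint_{|\zeta|=r}\zeta^n\,(\zeta I - VR(t_0,z))^{-1}\,d\zeta
\]
then gives $\|[VR(t_0,z)]^n\|\le C\,r^{n+1}$ uniformly in $z\in E$, whence the series is dominated by the geometric series $C r\sum_n(\rho r)^n$ on $E\times\{|t-t_0|\le\rho\}$. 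Uniform convergence of the series for $R(t,z)$ follows after multiplication by $R(t_0,z)$, which is also uniformly bounded on $E$.
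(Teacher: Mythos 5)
The paper gives no proof of this theorem — it is stated with a citation to \cite{limbook2} and used without argument — so there is no in-paper proof to compare against. Your proof is correct. The factorization
$A(t)-zI=[I+(t-t_0)VR(t_0,z)](A(t_0)-zI)$
reduces the first assertion directly to the Neumann-type inverse under a spectral-radius condition, which is precisely Proposition~\ref{propop}(a) applied to the pair $(A(t_0)-zI,\,A(t)-zI)$, and the resulting power series in $t-t_0$ has radius of convergence at least $1/r_\sigma(VR(t_0,z))$ by the root test, giving analyticity near $t_0$. For the uniform statement you correctly isolate the genuine difficulty: the identity $r_\sigma(B)=\lim_n\|B^n\|^{1/n}$ controls the growth of $\|B^n\|$ only pointwise, with no rate that is uniform in a parameter, so one cannot simply invoke the spectral radius bound for each $z$ and sum. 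Your Cauchy-integral representation
$[VR(t_0,z)]^n=\frac{1}{2\pi i}\oint_{|\zeta|=r}\zeta^n(\zeta I-VR(t_0,z))^{-1}\,d\zeta$,
combined with joint continuity of $(\zeta,z)\mapsto(\zeta I-VR(t_0,z))^{-1}$ on the compact set $\{|\zeta|=r\}\times E$, yields the uniform bound $\|[VR(t_0,z)]^n\|\le Cr^{n+1}$ and hence geometric domination whenever $\rho r<1$; this is the standard and natural device for this step. One small point to keep explicit: your choice of $r$ with $M<r<1/\rho$ relies on $M:=\max_{z\in E}r_\sigma(VR(t_0,z))$ being attained and finite, which the paper itself records just before the theorem (upper semicontinuity, indeed subharmonicity, of $z\mapsto r_\sigma(VR(z))$), so that ingredient is available and your argument is complete.
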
 

%
%

For $ \Gamma \subset \rho(A),$ define the disk  $$\partial_{\Gamma} := \{ t \in \C : |t|  \max_{z\in \Gamma} r_{\sig}(VR(z)) < 1\}.$$
Then by Theorem~\ref{hol}, we have $ \Gamma \subset \rho(A(t))$ for all $ t \in \partial_{\Gamma}.$ Consider the spectral projections $$ P := \frac{-1}{2\pi i} \int_{\Gamma} R(z) dz\; \text{ and } \; P(t) := \frac{-1}{2\pi i} \int_{\Gamma} R(t, z) dz.$$

\vone

\begin{theorem}[Kato-Rellich, \cite{limbook2,chatelin}] \label{katorel}  Let $ \Gamma \subset \rho(A).$ Then $ \Gamma \subset \rho(A(t))$ for all $ t \in \partial_{\Gamma}.$ The spectral projection 
 $$ P(t) := \frac{-1}{2\pi i} \int_{\Gamma} R(t, z) dz,$$ is analytic in $\partial_{\Gamma}$. For $ t \in \partial_{\Gamma},$  we have the Kato-Rellich perturbation series
$$P(t) = P + \sum^\infty_{n=1} \widehat{P}_n t^n,\;\;  \mbox{ where } \widehat{P}_n = \frac{(-1)^{n+1}}{2\pi i} \int_{\Gamma} R(z) [VR(z)]^n dz.$$

\end{theorem}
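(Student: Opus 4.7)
The plan is to apply Theorem~\ref{hol} with $t_0 = 0$ and $E := \Gamma$. Since $\Gamma \subset \rho(A)$ is compact, the hypothesis of that theorem reduces to $|t| < 1/\max_{z \in \Gamma} r_{\sigma}(VR(z))$, which is precisely the defining condition of $\partial_\Gamma$. Theorem~\ref{hol} therefore delivers simultaneously (i) the inclusion $\Gamma \subset \rho(A(t))$ for every $t \in \partial_\Gamma$, so that the contour integral defining $P(t)$ is well posed, and (ii) the Neumann-type expansion
$$R(t,z) = R(z) \sum_{n=0}^{\infty} [-VR(z)]^n\, t^n,$$
with convergence uniform in $z \in \Gamma$ and in $t$ ranging over any closed subdisk of $\partial_\Gamma$.

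With this expansion in hand, I would substitute it into the definition of $P(t)$:
$$P(t) = \frac{-1}{2\pi i} \int_\Gamma R(t,z)\, dz = \frac{-1}{2\pi i} \int_\Gamma R(z) \sum_{n=0}^{\infty} [-VR(z)]^n\, t^n\, dz.$$
Because $\Gamma$ is rectifiable and the series converges uniformly in operator norm for $z \in \Gamma$ (for $t$ in a closed subdisk of $\partial_\Gamma$), summation and integration commute, giving
$$P(t) = \sum_{n=0}^{\infty} \left( \frac{-1}{2\pi i} \int_\Gamma R(z)\,[-VR(z)]^n\, dz \right) t^n.$$
The $n = 0$ term is exactly $P$, and pulling the factor $(-1)^n$ out of $[-VR(z)]^n$ yields, for $n \geq 1$,
$$\widehat{P}_n = \frac{(-1)^{n+1}}{2\pi i} \int_\Gamma R(z)\,[VR(z)]^n\, dz,$$
which is the claimed Kato--Rellich coefficient. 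Analyticity of $t \mapsto P(t)$ on the whole of $\partial_\Gamma$ then follows automatically, because every $t_0 \in \partial_\Gamma$ sits inside some closed subdisk of $\partial_\Gamma$ on which the power series converges absolutely in operator norm.

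The substantive work is really buried in Theorem~\ref{hol}, which in turn rests on upper semicontinuity (indeed subharmonicity) of $z \mapsto r_{\sigma}(VR(z))$ together with compactness of $\Gamma$, so that the maximum in the definition of $\partial_\Gamma$ is finite and attained; once that is granted, the present theorem reduces to a bookkeeping exercise. The only genuine trap is sign tracking: the factor $(-1)^n$ from $[-VR(z)]^n$ combined with the overall $-1/(2\pi i)$ in front of the resolvent integral produces the $(-1)^{n+1}$ prefactor, and one must verify that the $n = 0$ term correctly reproduces $P$ rather than $-P$.
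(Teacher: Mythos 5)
Your proposal is correct and takes essentially the same route as the paper: both invoke Theorem~\ref{hol} with $E = \Gamma$, integrate the resulting Neumann series term by term (justified by uniform convergence of the series on $\Gamma$ for $t$ in closed subdisks of $\partial_\Gamma$), and read off the coefficients. The paper's exposition expands around an arbitrary $t_0 \in \partial_\Gamma$ first to confirm local analyticity and then specializes to $t_0 = 0$ to obtain the stated series; you go directly to $t_0 = 0$, which is slightly more economical since $\mathbb{D}_\Gamma$ with $t_0 = 0$ is exactly $\partial_\Gamma$, so the power series at the origin already converges throughout the disk.
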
 

\begin{proof} 
Let $ t_0 \in \partial_{\Gamma}.$ Then letting $E = \Gamma$ and for $t$ in a neighbourhood of $t_0,$ by Theorem~\ref{hol}, the series 
$ R(t, z) = R(t_0, z) \sum^{\infty}_{n=0} [-VR(t_0, z)]^n (t-t_0)^n$ converges uniformly for $ z \in \Gamma.$ Hence term by term integration yields
$$ P(t) = \frac{-1}{2\pi i} \sum^\infty_{n=0} \left( \int_{\Gamma} R(t_0, z) [-V R(t_0, z)]^n dz\right) (t-t_0)^n$$for $t$ near $t_0.$ This shows that $ t \longmapsto P(t)$ is analytic at $ t_0.$ Since $t_0$ is arbitrary, $P(t)$ is analytic in $\partial_{\Gamma}.$  The power series of $P(t)$  follows immediately. \end{proof}

%
%
%
%

\vone


\begin{theorem} \cite{limbook2, chatelin, kato}\label{main} Suppose that $ \sig_0 := \sig(A)\cap \mathrm{Int}(\Gamma) \subset \sig_d(A)$ and that  $\rank(P) = m,$ where $P$ is the spectral projection of $A$ corresponding to $\sig_0.$ Let $ \lam_1, \ldots, \lam_m$ be the $m$ eigenvalues (counting multiplicity) in $\sig_0$.	
	Then  $A(t)$ has exactly $m$ discrete eigenvalues (counting multiplicity)  $\lam_1(t), \ldots, \lam_m(t)$  inside $\Gamma$ for all $t \in \partial_{\Gamma}.$  Define  $ \dm{\lam_{\mathrm{av}} := \frac{\lam_1+\cdots +\lam_m}{m}}$ and  $ \dm{\lam_{\mathrm{av}}(t) = \frac{\lam_1(t)+\cdots +\lam_m(t)}{m}}$ for all $ t \in \partial_\Gamma.$

	\von Then $\dm{ \lam_{\mathrm{av}}(t) =  \frac{\tr(A(t)P(t))}{m}}$ is analytic in $\partial_{\Gamma}$  and
\beano \lam_{\mathrm{av}}(t) &=&  \frac{\tr(A(t)P(t))}{m} = \frac{1}{m} \tr\left( (A+ t V)(P + \sum^\infty_{n=1} \widehat{P}_n t^n)\right)  \\
&=& \lam_{\mathrm{av}} + \sum^\infty_{n=1} \alpha_n t^n \;\; \text{ for } t \in \partial_{\Gamma}.\eeano In particular, if $m=1,$ then $\lam_{\mathrm{av}}(t)$ is a simple eigenvalue of $A(t)$ with $\lam_{\mathrm{av}}(0) = \lam_{\mathrm{av}}$ for $ t \in \partial_\Gamma.$ Further, if $ Av = \lam_{\mathrm{av}} v$ then $ A(t)P(t)v = \lam_{\mathrm{av}}(t) P(t)v$ for $ t \in \partial_{\Gamma}.$ Hence $$  v(t) := P(t) v = v+ \sum^\infty_{n=1} \widehat{v}_n t^n \; \mbox{ for } \; t \in \partial_{\Gamma}.$$

\end{theorem}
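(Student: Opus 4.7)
The plan is to combine the Kato--Rellich expansion from Theorem~\ref{katorel} with the constancy-of-rank principle in Proposition~\ref{contp} and with a finite-rank trace identity. First I would handle the counting of eigenvalues inside $\Gamma$. By Theorem~\ref{katorel}, $P(t)$ is analytic (hence continuous) on the connected disk $\partial_\Gamma$, and $P(0) = P$ has rank $m$. Proposition~\ref{contp} then forces $\rank P(t) = m$ for every $t \in \partial_\Gamma$. Applied together with Theorem~\ref{spd} to $A(t)$ and $\Gamma$, this shows that $\sig(A(t)) \cap \mathrm{Int}(\Gamma)$ is exactly the spectrum of $A(t)_{|R(P(t))}$ acting on the $m$-dimensional space $R(P(t))$; consequently it consists of exactly $m$ discrete eigenvalues $\lam_1(t), \ldots, \lam_m(t)$ counted with algebraic multiplicity.

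Next I would establish the trace identity. Since $A(t)$ commutes with the spectral projection $P(t)$, the operator $A(t)P(t) = P(t)A(t)P(t)$ vanishes on $N(P(t))$ and acts as $A(t)_{|R(P(t))}$ on $R(P(t))$. Hence $A(t)P(t)$ is a finite-rank operator on $X$ whose trace equals $\tr(A(t)_{|R(P(t))}) = \sum_{j=1}^m \lam_j(t) = m\,\lam_{\mathrm{av}}(t)$.

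For the analyticity and power series, fix $t_0 \in \partial_\Gamma$. For $t$ close enough to $t_0$ the map $J_{P(t)} : R(P(t_0)) \to R(P(t))$, $x \mapsto P(t)x$, is a linear isomorphism by Lemma~\ref{pqrank}. Choosing a basis $\{v_1, \ldots, v_m\}$ of $R(P(t_0))$ and a biorthogonal family $\{\phi_1, \ldots, \phi_m\} \subset X^*$ (obtained by extending the dual basis via Hahn--Banach), I can write $\tr(A(t)P(t)) = \sum_{i=1}^m \phi_i\bigl(A(t)P(t)v_i\bigr)$ for $t$ near $t_0$, realising it as a finite sum of analytic scalars. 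This gives analyticity of $\lam_{\mathrm{av}}(t)$ on $\partial_\Gamma$, and substituting the Kato--Rellich series $P(t) = P + \sum_{n \geq 1}\widehat{P}_n t^n$ and $A(t) = A + tV$ into $\tr(A(t)P(t))/m$ and collecting powers of $t$ yields the displayed expansion $\lam_{\mathrm{av}}(t) = \lam_{\mathrm{av}} + \sum_{n \geq 1}\alpha_n t^n$.

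Finally, in the rank-one case $R(P(t))$ is one-dimensional, so $A(t)_{|R(P(t))}$ is multiplication by $\lam_{\mathrm{av}}(t)$. For $v$ with $Av = \lam_{\mathrm{av}}v$, the vector $v(t) := P(t)v$ lies in $R(P(t))$ and $A(t)v(t) = P(t)A(t)P(t)v = \lam_{\mathrm{av}}(t)P(t)v = \lam_{\mathrm{av}}(t)v(t)$, while the Kato--Rellich series gives $v(t) = v + \sum_{n \geq 1}(\widehat{P}_n v)\,t^n$. The main obstacle I anticipate is the analyticity step: in an infinite-dimensional Banach space one must carefully turn the operator-valued trace $\tr(A(t)P(t))$ into a genuine scalar analytic function, and this requires the biorthogonal representation (or an equivalent local $m \times m$ matrix model) made possible by the constancy of rank. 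Once that is in place, the rest of the argument is bookkeeping on the Kato--Rellich series.
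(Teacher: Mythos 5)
Your proposal follows the same route as the paper's proof: Kato--Rellich analyticity of $P(t)$ from Theorem~\ref{katorel}, constancy of $\rank(P(t))=m$ via Proposition~\ref{contp}, eigenvalue counting via Theorem~\ref{spd}, and then the trace identity and power series. The paper dispatches the last step with ``the rest follows from the power series of $P(t)$''; the extra care you take with the finite-rank operator $A(t)P(t)$, the biorthogonal representation of the trace, and the rank-one eigenvector argument fills in exactly that gap and is consistent with what the cited references do.
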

\begin{proof} By Theorem~\ref{katorel}, the spectral projection  $P(t)$ is analytic in $\partial_\Gamma.$ Hence by Proposition~\ref{contp}, we have $\rank(P(t)) = \rank(P) = m$ for all $ t \in \partial_\Gamma.$  By Theorem~\ref{spd}, $\sig(A(t)) \cap \mathrm{Int}(\Gamma) \subset \sig_d(A(t))$ and contains $m$ discrete  eigenvalues (counting multiplicity) of $A(t)$ for all $t \in \partial_\Gamma.$ The rest of the proof follows from the  power series of $P(t)$. \end{proof} 

\vone 
It can be shown that $ \alpha_1 = \tr(VP)$ when $ \sig_0 =\{\lam\}$ and $\lam$  is a simple eigenvalue else $ \alpha_1 = \tr(VP)/m,$ see~\cite{kato}.  
The series expansion in Theorem~\ref{main} can also be derived as follows.  It is easy to see that $$AP =  \frac{-1}{2\pi i} \int_{\Gamma} z R(z) dz \text{ and }  \lam_{\mathrm{av}} = \frac{1}{m}\tr(AP) = \frac{1}{m}\tr\left( \frac{-1}{2\pi i} \int_{\Gamma}z R(z) dz \right).$$
Similarly $ \lam_{\mathrm{av}}(t) =  \frac{\tr(A(t)P(t))}{m} = \frac{1}{m}\tr\left( \frac{-1}{2\pi i} \int_{\Gamma}z R(t, z) dz \right)$. Hence by Theorem~\ref{hol}, 
 $$ \lam_{\mathrm{av}}(t) = \frac{1}{m}\tr\left( \frac{-1}{2\pi i} \sum^\infty_{n=0} \left( \int_{\Gamma}z R(t_0, z) [-V R(t_0, z)]^n dz\right) (t-t_0)^n\right).$$ Now taking $ t_0 = 0,$ we have the desired result.

\vone
An asymptotic bound for the arithmetic mean of the eigenvalues  in $\sig_0$ follows immediately from Theorem~\ref{main}. Indeed, we have \be\label{tracebd} |\lam_{\mathrm{av}} - \lam_{\mathrm{av}}(t) | \leq |\alpha_1|\, |t| + \mathcal{O}(|t|^2).\ee As for non-asymptotic bounds, we have the following result, see~\cite{alamjma, alammc, osborn, chatelin}. \von

\begin{theorem} \label{mainbd} Suppose that $ \sig(A)\cap\mathrm{Int}(\Gamma)) = \{\mu\} \subset \sig_d(A)$ and $ \Gamma \subset \rho(A).$  Let $\ell$ and $\nu$ be the algebraic multiplicity and ascent of $\mu,$ respectively.

	Let $\mu_1(t), \ldots, \mu_{\ell}(t)$ be the $\ell$ eigenvalues (counting multiplicities) of $A(t)$ inside $\Gamma $ and
	$\mu_{\mathrm{av}}(t) := (\mu_1(t) + \cdots + \mu_{\ell}(t))/{\ell}$ for $ t \in \partial_{\Gamma}.$ Then the following hold.
	
	(a) There is a $\delta >0$  and a constant $\alpha$ (independent of $t$ and $V$) such that $$ \|t V\| < \delta\Longrightarrow | \mu_j(t) - \mu|^{\nu} \leq  \alpha \|t V\| \ \text{ for } \ j = 1,2, \ldots, \ell.$$
	
	(b) There is a $\delta >0$  and a constant $\beta$ (independent of $t$ and $V$) such that $$ \|t V\| < \delta\Longrightarrow | \mu_{\mathrm{av}}(t) - \mu| \leq  \beta \|t V\|.$$
	
\end{theorem}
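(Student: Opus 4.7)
My plan is to leverage the analytic perturbation machinery already in place. By Theorems~\ref{katorel} and~\ref{main}, once $|t|\,\|V\|\max_{z\in\Gamma}\|R(z)\|<1$ the spectral projection $P(t)$ is analytic, $\rank(P(t))=\ell$, and $A(t)$ has exactly $\ell$ discrete eigenvalues $\mu_1(t),\dots,\mu_\ell(t)$ inside $\Gamma$. I would first fix $\delta>0$ so that $\|tV\|<\delta$ forces $c_1\|tV\|\leq 1/2$ with $c_1:=\max_{z\in\Gamma}\|R(z)\|$. Reading off the integral formula $\widehat{P}_n=\frac{(-1)^{n+1}}{2\pi i}\int_{\Gamma}R(z)[VR(z)]^n\,dz$ gives $\|\widehat{P}_n\|\leq c_0\,c_1^{\,n}\|V\|^n$, so $\Delta P(t):=P(t)-P$ satisfies $\|\Delta P(t)\|\leq C\|tV\|$ uniformly for $\|tV\|<\delta$. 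This linear control in $\|tV\|$ is the backbone of everything that follows.

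For (b), since $\mu$ is the only eigenvalue of $A$ inside $\Gamma$ (of multiplicity $\ell$), one has $\ell(\mu_{\mathrm{av}}(t)-\mu)=\tr((A(t)-\mu I)P(t))$. Because $AP=PA$ and $D:=(A-\mu I)P$ is nilpotent on the $\ell$-dimensional space $R(P)$, $\tr(D)=0$. I would then expand
\[
(A(t)-\mu I)P(t)=D+(A-\mu I)\Delta P(t)+tVP(t),
\]
and observe that the last two operators have rank $\leq\ell$ and operator norm $O(\|tV\|)$; their traces are therefore bounded by $\ell\cdot O(\|tV\|)$, yielding $|\mu_{\mathrm{av}}(t)-\mu|\leq\beta\|tV\|$.

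For (a), the central identity is $(A-\mu I)^\nu P=D^\nu=0$, which holds by the definition of the ascent~$\nu$ together with $(A-\mu I)^\nu P=D^\nu$ (a consequence of the commutativity $AP=PA$). I would expand $(A(t)-\mu I)^\nu=(A-\mu I+tV)^\nu$ as a non-commutative binomial sum of $2^\nu$ ordered products and apply it to $P(t)=P+\Delta P(t)$. The sole summand with no factor of $tV$ contributes $(A-\mu I)^\nu P=0$; every other summand carries at least one $tV$, so its norm is bounded by $C\|tV\|$ with $C$ depending only on $\nu$ and $\|A-\mu I\|$. The term $(A(t)-\mu I)^\nu\Delta P(t)$ is handled by the uniform bound $\|(A(t)-\mu I)^\nu\|\leq(\|A-\mu I\|+\delta)^\nu$ combined with $\|\Delta P(t)\|\leq C\|tV\|$. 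Adding up, $\|(A(t)-\mu I)^\nu P(t)\|\leq\alpha\|tV\|$.

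To finish, $R(P(t))$ is $A(t)$-invariant and $\ell$-dimensional with eigenvalues $\mu_1(t),\dots,\mu_\ell(t)$, so the restriction of $(A(t)-\mu I)^\nu$ to $R(P(t))$ has eigenvalues $(\mu_j(t)-\mu)^\nu$ and operator norm majorized by $\|(A(t)-\mu I)^\nu P(t)\|$ (since $y\in R(P(t))$ implies $y=P(t)y$). The inequality $r_{\sig}(\cdot)\leq\|\cdot\|$ on this finite-dimensional invariant subspace then yields $\max_j|\mu_j(t)-\mu|^\nu\leq\alpha\|tV\|$, proving (a). The hard part, I expect, is not any single estimate but the careful bookkeeping that certifies $\alpha$ and $\beta$ as genuinely independent of~$V$: this rests on the observation that each Kato--Rellich coefficient $\widehat{P}_n t^n$ scales like $\|tV\|^n$ rather than merely $|t|^n$, so every tail series collapses cleanly to $O(\|tV\|)$ for $\|tV\|<\delta$.
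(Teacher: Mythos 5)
The paper itself does not supply a proof of Theorem~\ref{mainbd}: it cites \cite{alamjma, alammc, osborn, chatelin} and moves on, so there is no in-paper argument to compare yours against. On its own merits your proof is correct and self-contained, and it makes good use of exactly the machinery the paper has already developed (the Kato--Rellich series from Theorem~\ref{katorel}, the identity $D^\nu=0$ characterizing the ascent, and the trace formula $\mu_{\mathrm{av}}(t)=\tr(A(t)P(t))/\ell$ from Theorem~\ref{main}). The key observation that makes the $V$-independence work -- that each Kato--Rellich term $\widehat{P}_n t^n$ scales like $(c_1\|tV\|)^n$ rather than merely $|t|^n$, so that $\|P(t)-P\|\leq C\|tV\|$ uniformly once $c_1\|tV\|\leq 1/2$ -- is exactly right and is the crux of the whole argument. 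Your route to (a), bounding $\|(A(t)-\mu I)^\nu P(t)\|$ by a non-commutative binomial expansion isolating the vanishing term $(A-\mu I)^\nu P=D^\nu=0$, and then using $r_\sigma\leq\|\cdot\|$ on the $\ell$-dimensional invariant subspace $R(P(t))$, is essentially the argument one finds in Osborn's paper and in Chatelin's and Limaye's books, so you have independently reconstructed the standard proof.

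One small slip worth flagging: in part (b) you assert that $(A-\mu I)\Delta P(t)$ has rank $\leq\ell$. Since $\Delta P(t)=P(t)-P$ is a difference of two rank-$\ell$ projections, its rank can be as large as $2\ell$, and the same bound is inherited by $(A-\mu I)\Delta P(t)$. The trace estimate $|\tr(T)|\leq\rank(T)\,\|T\|$ still gives the linear bound in $\|tV\|$; the constant $\beta$ simply absorbs a factor of $2$. Likewise, when you say "its norm is bounded by $C\|tV\|$" for the non-vanishing binomial summands, you should note that higher-order terms $\|tV\|^k$, $k\geq 2$, are controlled by $\delta^{k-1}\|tV\|$ once $\|tV\|<\delta$, which you implicitly use; it would be cleaner to say so explicitly. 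Neither issue affects the validity of the argument.
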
 

\vone 
\section{Nonlinear eigenvalue problems}  Let $ \Omega \subset \C$ be open and connected. Let  $ T : \Omega \longrightarrow BL(X)$ be holomorphic and regular, that is, $T(z)$ is invertible for some $ z \in \Omega.$ Consider the nonlinear eigenvalue problem  $$ T(\lam) v =0,$$ where $ \lam \in \Omega  $ and  $v \in X$ is nonzero. Nonlinear eigenvalue problems arise in many applications; see~\cite{nlevp, volker1, volker2, MM,  tiss1, WN, alamsafik} and the references therein.  For example,  the nonlinear eigenvalue problem
$$ T(\lambda)v := (\lambda I -A_{0}-\sum\limits _{i=1}^{m}\, A_{i}e^{-\lambda \tau _{i}}) v=0  $$ arises when we seek a solution of the $x(t) :=e^{\lam t}v$  of the delay differential equation
$$
\frac{dx(t)}{dt}= A_{0}x(t)+ \sum\limits _{i=1}^{m}\, A_{i}x(t-\tau _{i}),
$$
where $x(t)\in \, \mathbb{R}^{n}$ is the state variable at time $t$, $A_{i}$'s are $n\times n$ matrices,  and
$0< \, \tau _{1} <  \tau _{2} < \cdots < \tau _{m}$ represent the time-delays~\cite{WN}.

We now show that perturbation results for discrete eigenvalues of bounded linear operators  can be extended to the case of nonlinear eigenvalue problems. To that end, we define the spectrum and the discrete spectrum of $T(z)$  as follows, see~\cite{alamali}. \von

 Let $ T : \Omega \longrightarrow BL(X)$ be holomorphic.  The resolvent set $\rho(T)$ and the spectrum $\sig(T)$ of $T(z)$ are defined by
 $$ \rho(T) := \{ \lam \in \Omega : (T(\lam))^{-1} \in BL(X)\} \text{ and } \sig(T) := \Omega \setminus \rho(T).$$ If $ \rho(T) \neq \emptyset$ then $T(z)$ is said to be {\em regular.} 
 
\von

\begin{definition}   Let $ T : \Omega \longrightarrow BL(X)$ be holomorphic and regular. 
	\begin{itemize}
	\item[(a)] Then $\mu \in \sig(T) $ is said to be an eigenvalue of $T(z)$ if there is a nonzero vector $ v \in X$ such that $T(\mu) v = 0.$ The vector $v$ is called an eigenvector of $T(z)$ corresponding to the eigenvalue $\mu.$
	
	\item[(b)] An eigenvalue  $\mu$ is said to be a discrete eigenvalue of $T(z)$ if $\mu$ is an isolated point of $ \sig(T)$ and  $T(\mu)$ is Fredholm.  The discrete spectrum  $\sig_d(T)$ of $T(z)$ is the set of all discrete eigenvalues of $T(z).$
	
	\item[(c)]  Let $ \mu \in \sig_d(T).$ Then $\nu$ is called the ascent of the eigenvalue  $\mu$ if $\mu$ is a pole of $ T(z)^{-1}$  order $\nu$. 
\end{itemize} 		
\end{definition}

\von

We always assume that $ \Gamma \subset \Omega$ is a positively oriented rectifiable simple closed curve such that $\mathrm{Int}(\Gamma) \subset \Omega.$ It is well known that the number of zeros (counting multiplicity)  of a holomorphic function $ f : \Omega \longrightarrow \C$ inside a curve $\Gamma$ is given by the logarithmic residue $$ \mathrm{n}(\Gamma, f) = \frac{1}{2 \pi i} \int_{\Gamma} f'(z) f(z)^{-1} dz,$$ where $f'(z)$ is the derivative of $f(z).$ An operator analogue of the logarithmic residue theorem is proved in~\cite{ggk, gs} which gives the algebraic multiplicity of a discrete eigenvalue $\mu \in \sig_d(T)$.  If $\Gamma \subset \rho(T)$ and $\sig(T) \cap\mathrm{Int}(\Gamma) = \{\mu\} \subset \sig_d(T)$ then 
$$  m(\mu, T) = \mathrm{Tr}\left(\frac{1}{2 \pi i} \int_{\Gamma} T(z)^{-1} T'(z)  dz\right)$$ is the algebraic multiplicity of $\mu,$ 
where $T'(z)$ is derivative of $T(z)$ with respect to $z.$ 

\vone

Let $ V : \Omega \longrightarrow BL(X)$ be holomorphic. Consider the one parameter family of holomorphic operator-valued function $W(t, z) := T(z) + t V(z)$ for $ t \in \C.$ We now briefly describe how to extend  spectral perturbation theory for discrete eigenvalues of a bounded linear operator to the case of a holomorphic  operator-valued function.  \von

Rouche's theorem states that if $ f, g : \Omega \longrightarrow \C$ are holomorphic and $\Gamma$ is a simple closed curve in $\Omega$ and if $ \max_{z \in \Gamma} | g(z) f(z)^{-1}| < 1$ then $f$ and $f+g$ have the same number   of zeros (counting multiplicity) inside the curve $\Gamma.$ An operator analogue of Rouche's theorem is proved in~\cite{gs, ggk} which states that if $ T(z)$ and $ V(z)$ are Fredholm for all $z \in \Omega$ and if $ \max_{z \in \Gamma} \| V(z) T(z)^{-1}\| < 1$ then $ T(z) + V(z)$ is Fredholm for all $z \in \mathrm{Int}(\Gamma)$  and that  $T(z)$ and $T(z) + V(z)$  have the same number of eigenvalues (counting multiplicity) inside the curve $\Gamma.$  It is shown in~\cite{alamali} that  Rouche's theorem still holds under a weaker assumption, namely,  when  $\| V(z) T(z)^{-1}\|$ is replaced with the spectral radius $r_{\sig}( V(z) T(z)^{-1})$. \von

\begin{theorem}\cite{alamali} \label{eigcount} Let $ T,  V : \Omega \longrightarrow L(X)$ be holomorphic and $T(z)$ be regular. Suppose that $ \sig(T)\cap \mathrm{Int}(\Gamma) \subset \sig_d(T)$ and that $ \max_{z\in \Gamma}r_{\sig}( V(z) T(z)^{-1}) <1$.  Then both $T(z)$ and $T(z) +V(z)$ are Fredholm operators of index zero for  all $z\in \mathrm{Int}(\Gamma)$  and that $T(z)$ and $ T(z) + V(z)$ have the same number  of discrete eigenvalues (counting multiplicity) inside the curve $\Gamma.$ 
\end{theorem}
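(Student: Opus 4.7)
The plan is a homotopy argument in the spirit of the classical Rouch\'e theorem, carried out along the one-parameter family $W_s(z) := T(z) + sV(z)$ for $s \in [0,1]$. The proof splits naturally into three stages: invertibility of $W_s$ on the contour $\Gamma$, Fredholmness of index zero on $\mathrm{Int}(\Gamma)$, and continuity of the eigenvalue count via the operator logarithmic residue theorem.

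First, I would verify that $W_s(z)$ is invertible for every $z \in \Gamma$ and every $s \in [0,1]$. Since
\[ r_{\sig}\bigl(sV(z)T(z)^{-1}\bigr) = s\,r_{\sig}\bigl(V(z)T(z)^{-1}\bigr) < 1, \]
we have $-1 \notin \sig(sV(z)T(z)^{-1})$, so $I + sV(z)T(z)^{-1}$ is invertible and hence $W_s(z) = \bigl(I + sV(z)T(z)^{-1}\bigr)T(z)$ is invertible on $\Gamma$. In particular, $\Gamma \subset \rho(T + sV)$ for all $s \in [0,1]$, which also ensures that the curve $\Gamma$ stays in the resolvent set throughout the homotopy.

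Next, for the Fredholm statement, $T$ itself is straightforward: the hypothesis $\sig(T) \cap \mathrm{Int}(\Gamma) \subset \sig_d(T)$ forces every $z \in \mathrm{Int}(\Gamma)$ either to lie in $\rho(T)$ (where $T(z)$ is invertible, hence Fredholm of index zero) or to be a discrete eigenvalue (where $T(z)$ is Fredholm by definition), and the locally constant Fredholm index vanishes on the dense open subset $\rho(T) \cap \mathrm{Int}(\Gamma)$, hence throughout. For $T + V$, I would apply the analytic Fredholm theorem of Gohberg--Sigal to the holomorphic family $W_s$: invertibility on $\Gamma$ serves as an anchor, and combined with stability of the Fredholm property under the continuous deformation in $s$ together with local constancy of the index, it extends $W_s$ to Fredholm of index zero on $\mathrm{Int}(\Gamma)$ for each $s \in [0,1]$, with $\sig(W_s) \cap \mathrm{Int}(\Gamma) \subset \sig_d(W_s)$. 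This step is the principal obstacle: the spectral radius bound lives on $\Gamma$ alone and there is no compactness hypothesis on $V$, so the Fredholm property inside $\Gamma$ is not free and must be obtained by the homotopy together with analytic Fredholm theory.

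Finally, I would introduce
\[ N(s) := \frac{1}{2\pi i}\,\tr\!\int_\Gamma W_s(z)^{-1}\,W_s'(z)\,dz, \]
where $W_s'$ denotes the derivative with respect to $z$. By the operator logarithmic residue theorem (Gohberg--Sigal), $N(s)$ equals the total algebraic multiplicity of discrete eigenvalues of $T + sV$ inside $\Gamma$. Since $W_s(z)$ is invertible on the compact set $[0,1] \times \Gamma$ and $W_s(z)$, $W_s'(z)$ depend jointly continuously on $(s,z)$ there, the integrand is jointly continuous and $N$ is continuous on $[0,1]$. Being integer-valued, $N$ is constant, yielding $N(0) = N(1)$ and hence the claimed equality of eigenvalue counts.
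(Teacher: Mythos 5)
Your argument has a genuine gap in the Fredholm step for $T+V$, which you yourself flag as ``the principal obstacle'' but do not actually resolve. The analytic Fredholm theorem of Gohberg--Sigal is not a device for \emph{producing} the Fredholm property; it takes Fredholmness of a holomorphic operator family on a connected domain as a \emph{hypothesis} and concludes that the set of non-invertibility is discrete with finitely meromorphic inverse. Likewise, ``stability of the Fredholm property under the continuous deformation in $s$'' only says the set $\{(s,z): W_s(z)\ \text{Fredholm}\}$ is open in $[0,1]\times \overline{\mathrm{Int}(\Gamma)}$; nothing in the hypotheses forces this open set to exhaust $[0,1]\times \mathrm{Int}(\Gamma)$, since the spectral-radius bound controls $V(z)T(z)^{-1}$ only for $z\in\Gamma$ and there is no smallness or compactness assumption on $V(z)$ for $z$ inside $\Gamma$. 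Without Fredholmness and the resulting finitely meromorphic structure of $W_s(z)^{-1}$ inside $\Gamma$, the integral $\frac{1}{2\pi i}\int_\Gamma W_s(z)^{-1}W_s'(z)\,dz$ need not be a finite-rank (or even trace-class) operator, so the quantity $N(s)$ in your final step is not well defined and the logarithmic residue theorem cannot be invoked; the argument is circular, because applying that theorem presupposes exactly the conclusion you are trying to reach.

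The route taken in \cite{alamali}, and sketched in the surrounding discussion of section~6, circumvents this by passing to the Gohberg--Kaashoek--Lay linearization: the one-parameter family $W_s(z) = T(z) + sV(z)$ is replaced by the linear pencil $(\mathbb{T} + s\mathbb{V}) - zI$ on $C(\partial\Omega, X)$, and Theorem~\ref{thm1} transfers the resolvent set, discrete spectrum, multiplicities and Fredholmness back and forth. On the linear side the hypothesis $\sigma(T)\cap\mathrm{Int}(\Gamma)\subset\sigma_d(T)$ yields a \emph{finite-rank} spectral projection $\mathbb{P}$ at $s=0$, the spectral-radius bound on $\Gamma$ guarantees (via the Kato--Rellich series, Theorem~\ref{katorel}) that the projection $\mathbb{P}(s)$ is analytic for $s\in[0,1]$, and Proposition~\ref{contp} then forces $\mathrm{rank}\,\mathbb{P}(s)=\mathrm{rank}\,\mathbb{P}$. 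Finite rank of $\mathbb{P}(1)$ immediately gives that $\sigma(\mathbb{T}+\mathbb{V})\cap\mathrm{Int}(\Gamma)$ consists of discrete eigenvalues of total multiplicity $\mathrm{rank}\,\mathbb{P}$; at those points $(\mathbb{T}+\mathbb{V})-zI$ is Fredholm of index zero because the associated spectral projection has finite rank, and elsewhere it is invertible. Transferring back through the equivalence gives both claims simultaneously. This is why the linearization is the crux: the constancy of the rank of the spectral projection does the work that in your outline is left to an unproved Fredholm deformation argument. Your first, second, and fourth steps are sound given the third, but the third as written does not go through.
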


\vone
The key to extending the spectral perturbation results for a bounded linear operator to the case of a holomorphic operator-valued function is the concept of linearization.  Linearization is a powerful technique that transforms a nonlinear eigenvalue problem to a linear eigenvalue problem. Linearizations of polynomial and rational eigenvalue problems have been studied extensively in the literature; see~\cite{glr,mmmm, AV, blkmin,tdm, rafinami1, rafinami2, dopico} and references therein. For example, the linear eigenvalue problem in (\ref{gep}) is a linearization of the quadratic eigenvalue problem in (\ref{quad}). Linearization of a holomorphic operator-valued function is defined as follows.\vone

 \begin{definition}\cite{alamali}\label{linearization}  Let $X$ and $\mathbb{X}$ be Banach spaces. Let $T : \Omega \longrightarrow BL(X)$ be holomorphic and regular.  Then an operator $\mathbb{T} \in BL(\mathbb{X})$ is said to be a linearization of $T(z)$ on $\Omega$ if there exists a Banach space $Z$ such that $ \mathbb{T}-z  I$ and $ T(z)  \oplus I_Z$ are equivalent on $\Omega$  $$ \mathbb{T}-z  I  \sim_{\Omega}  T(z)  \oplus I_Z,$$
	that is, there exist holomorphic and invertible operator  functions $E : \Omega \to BL(X \oplus Z,  \mathbb{X})$ and $F : \Omega \to BL(\mathbb{X}, X \oplus Z )$ such that
	$ F(z)(zI - \mathbb{T})E(z) = T(z) \oplus I_Z\   \text{for all} \  z \in \Omega, $
	where $I$ is the identity operator on $\mathbb{X}$ and $I_Z$ is the identity operator on $Z.$
\end{definition}

\vone 
Observe that  if $\mathbb{T}$ is a linearization of $T (z)$ on $\Omega$ then   $\sig(T) = \sig(\mathbb{T}) \cap \Omega.$  Also, if  $T(z)$ is Fredholm then so is $ \mathbb{T}- z I.$  Hence if $ \mu$ is a discrete eigenvalue of $ T(z)$ then $ \mu$ is a discrete eigenvalue of $ \mathbb{T}$, see~\cite{alamali}. We have the following result.

\von 
\begin{theorem}\cite{alamali} \label{thm1} Let $T : \Omega \longrightarrow BL(X)$ be  holomorphic and regular. Let $ \mathbb{T} \in BL( \mathbb{X})$ be a linearization of $T(z)$ on $ \Omega.$   Then $\sig(T) = \sig(\mathbb{T})\cap \Omega.$
	Let $ \mu\in \sig(T)$ be an isolated point and  $ \Gamma \subset \rho(T)$ be such that $\sig(T) \cap \mathrm{Int}(\Gamma) = \{\mu\}$ and $\mathrm{Int}(\Gamma) \subset \Omega.$  Then $ \Gamma \subset \rho(\mathbb{T})$. Further,
	$\mu \in \sig_d(T) \iff \mu \in \sig_d(\mathbb{T}).$

	Suppose that $\mu \in \sig_d(T).$   Let $ \mathbb{P} := \frac{1}{2 \pi i} \int_{\Gamma} (z I - \mathbb{T})^{-1} dz$ be the spectral projection associated with $\mathbb{T}$ and $\mu.$ Then  we have $$  m(\mu, T) = \mathrm{Tr}\left(\frac{1}{2 \pi i} \int_{\Gamma} T(z)^{-1}T'(z)  dz\right) = \mathrm{Tr}\left( \frac{1}{2 \pi i} \int_{\Gamma} (z I - \mathbb{T})^{-1} dz \right)= \mathrm{rank}(\mathbb{P}),$$
	that is, the algebraic multiplicity of $\mu$ as an eigenvalue of $T(z)$ is the same as the algebraic multiplicity of $\mu$ as an eigenvalue of $\mathbb{T}.$ 	Further, the ascent of $\mu$ as an eigenvalue of $T(z)$ is the same as the ascent of $\mu$ as an eigenvalue of $\mathbb{T}.$
\end{theorem}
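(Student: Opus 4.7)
The plan is to derive every assertion from the defining equivalence $F(z)(zI-\mathbb{T})E(z) = T(z) \oplus I_Z$, using that $E$ and $F$ are holomorphic and invertible on $\Omega$ so that multiplication by them preserves invertibility, Fredholmness, and pole orders. For the spectrum equality, note that for any $z \in \Omega$ the operator $zI - \mathbb{T}$ is invertible if and only if $T(z) \oplus I_Z$ is, if and only if $T(z)$ is (since $I_Z$ is trivially invertible); hence $\rho(T) = \rho(\mathbb{T}) \cap \Omega$ and $\sig(T) = \sig(\mathbb{T}) \cap \Omega$. The hypothesis $\Gamma \subset \rho(T)$ then yields $\Gamma \subset \rho(\mathbb{T})$ and $\sig(\mathbb{T}) \cap \mathrm{Int}(\Gamma) = \{\mu\}$.

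Next, writing $\mu I - \mathbb{T} = F(\mu)^{-1}(T(\mu) \oplus I_Z)E(\mu)^{-1}$, preservation of Fredholmness under composition with invertible operators (together with $I_Z$ being trivially Fredholm of index zero) gives that $\mu I - \mathbb{T}$ is Fredholm iff $T(\mu)$ is; combined with the first step, this yields $\mu \in \sig_d(T) \iff \mu \in \sig_d(\mathbb{T})$. For the ascent, invert the equivalence on a punctured neighborhood of $\mu$ to obtain $T(z)^{-1} \oplus I_Z = E(z)^{-1}(zI - \mathbb{T})^{-1}F(z)^{-1}$. Since $E^{-1}$ and $F^{-1}$ are holomorphic and invertible at $\mu$ and the $I_Z$ summand is holomorphic, pre- and post-multiplication preserves the order of any pole at $\mu$; hence $T(z)^{-1}$ has a pole of order $\nu$ at $\mu$ exactly when $(zI - \mathbb{T})^{-1}$ does, so the ascents agree.

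For the multiplicity formula, the first equality is the logarithmic residue theorem already cited in the excerpt. Applying it instead to the linear pencil $\mathcal{T}(z) := zI - \mathbb{T}$, whose derivative is $I$, gives $m(\mu, \mathbb{T}) = \mathrm{Tr}\bigl(\frac{1}{2\pi i}\int_\Gamma (zI - \mathbb{T})^{-1}\, dz\bigr) = \mathrm{Tr}(\mathbb{P}) = \mathrm{rank}(\mathbb{P})$, using that $\mathbb{P}$ is a finite-rank projection (by the previous step). Thus the content of the middle equality is invariance of the logarithmic residue under holomorphic equivalence, namely $m(\mu, T) = m(\mu, \mathbb{T})$. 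I would prove this by differentiating $F(z)(zI-\mathbb{T})E(z) = T(z) \oplus I_Z$ and left-multiplying by $(T \oplus I_Z)^{-1} = E^{-1}(zI-\mathbb{T})^{-1}F^{-1}$ to obtain
\begin{equation*}
(T^{-1}T') \oplus 0 = E^{-1}(zI-\mathbb{T})^{-1}F^{-1}F'(zI-\mathbb{T})E + E^{-1}(zI-\mathbb{T})^{-1}E + E^{-1}E'.
\end{equation*}
Integrating around $\Gamma$ and taking trace, the $E^{-1}E'$ term is holomorphic on $\mathrm{Int}(\Gamma)$ and contributes zero; the first right-hand term reduces by cyclic trace to the integral of the holomorphic function $\mathrm{Tr}(F^{-1}F')$ and also contributes zero; the second reduces likewise to $\mathrm{Tr}(\mathbb{P}) = m(\mu, \mathbb{T})$; and the left side gives $m(\mu, T)$.

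The main obstacle I anticipate is justifying the cyclic trace manipulations in infinite dimensions, since $(zI - \mathbb{T})^{-1}$ is not trace class pointwise on $\Gamma$. I would handle this by exchanging trace and contour integral only after observing that each integrated operator is finite rank (thanks to the finite-rank principal part of $(zI-\mathbb{T})^{-1}$ at $\mu$), and by appealing to the Gohberg--Sigal invariance of partial multiplicities under holomorphic equivalence, which packages both the algebraic-multiplicity and the ascent equalities in a single principle.
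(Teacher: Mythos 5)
This theorem is stated in the paper with the citation \cite{alamali} and no proof is supplied, so there is no paper-internal argument to compare against. That said, your route through the defining equivalence $F(z)(zI-\mathbb{T})E(z) = T(z)\oplus I_Z$ is the natural one and your formal computation of the logarithmic residue (differentiate, left-multiply by $(T\oplus I_Z)^{-1}$, integrate, take trace, use cyclicity) is correct; your flagging of the trace-class issue and the retreat to the Gohberg--Sigal invariance of partial multiplicities is also the right way to make it rigorous.

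There is, however, one genuine gap in the discrete-spectrum equivalence. You show that $\mu I-\mathbb{T}$ is Fredholm iff $T(\mu)$ is, and then conclude $\mu\in\sig_d(T)\iff\mu\in\sig_d(\mathbb{T})$ ``combined with the first step.'' But the paper defines the two discrete spectra differently: for $T(z)$ a discrete eigenvalue is an isolated spectral point at which $T(\mu)$ is \emph{Fredholm}, while for the bounded operator $\mathbb{T}$ a discrete eigenvalue is an isolated spectral point whose \emph{spectral projection has finite rank}. Your argument reduces the matter to showing that, for an isolated $\mu\in\sig(\mathbb{T})$, $\mu I-\mathbb{T}$ is Fredholm if and only if $\mathrm{rank}(\mathbb{P})<\infty$. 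One direction is easy (finite-rank projection gives finite-dimensional kernel and finite-codimensional range). The converse needs two facts you do not state: first, by continuity of the Fredholm index and the regularity of $T$, the index of $\mu I - \mathbb{T}$ is zero; second, $(\mu I - \mathbb{T})$ restricted to $R(\mathbb{P})$ is quasinilpotent and inherits Fredholmness from the block decomposition, and a quasinilpotent Fredholm operator can only live on a finite-dimensional space (otherwise $0$ would have to lie off the essential spectrum, which is impossible in infinite dimensions). Without this bridge, the iff you assert does not actually match the paper's definitions. Everything else in the proposal stands.
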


\vone Gohberg-Kaashoek-Lay~\cite{gkl, ggk} constructed  a linearization of $T(z)$, which we refer to as GKL-linearization, as follows.  For the rest of this section, assume that  $\Omega $ is a simply connected bounded open subset of $\C$ such that the boundary $\partial \Omega$ is a simple closed rectifiable curve and is oriented positively.
Let $ C(\partial \Omega, X)$ denote the Banach space of all $X$-valued continuous functions on $\partial\Omega$ endowed with the supremum norm $$C(\partial \Omega, X) := \{ f: \partial \Omega  \longrightarrow X \ | \ f \ \text{ is continuous} \} \mbox{ and } \|f\|_{\infty} := \sup_{z \in \partial \Omega}\|f(z)\|.$$

\begin{theorem}\cite{gkl, ggk} \label{lin} Let $ T : \Omega \longrightarrow BL(X)$ be holomorphic and continuous on the closure $\bar \Omega.$ Suppose that $T(z)$ is regular.   Define $ \mathbb{T} : C(\partial \Omega, X)\longrightarrow C(\partial \Omega, X)$ by
	$$
	(\mathbb{T} f)(z) := z f(z)-\frac{1}{2\pi i} \int_{\partial \Omega}(I-T(w))f(w)dw,$$
	 where $I$ is the identity operator on $X.$   Then $\mathbb{T}$ is a bounded operator and there exists a Banach space $Z$ such that $ T(z) \oplus I_Z \sim_{\Omega} \mathbb{T}-z I,$ where $I$ is the identity operator on $ C(\partial \Omega,  X).$  Hence $\mathbb{T}$ is a linearization of $T(z)$ on $ \Omega.$
\end{theorem}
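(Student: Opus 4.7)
The first claim is a routine estimate. Since $\Omega$ is bounded with rectifiable boundary of length $\ell$, and $T$ is continuous on $\bar\Omega$, the triangle inequality gives
$$\|\mathbb{T}f\|_\infty \leq \Big(\sup_{z \in \partial\Omega}|z| + \frac{\ell}{2\pi}\sup_{w \in \partial\Omega}\|I - T(w)\|\Big)\|f\|_\infty,$$
so $\mathbb{T} \in BL(C(\partial\Omega, X))$.

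For the linearization part, I would first expose the algebraic structure of $\mathbb{T}-zI$. Define $c \in BL(C(\partial\Omega, X), X)$ by $c(f) := \frac{1}{2\pi i}\int_{\partial\Omega}(I - T(u))f(u)\,du$, the constant embedding $\iota \in BL(X, C(\partial\Omega, X))$ by $\iota(x)(w) \equiv x$, and the multiplication operator $M_z \in BL(C(\partial\Omega, X))$ by $(M_zf)(w) := (w-z)f(w)$. Then $((\mathbb{T}-zI)f)(w) = (w-z)f(w) - c(f)$ reads $\mathbb{T}-zI = M_z - \iota c$. For $z \in \Omega$, the operator $M_z$ is invertible with $(M_z^{-1}g)(w) = g(w)/(w-z)$, and $z \mapsto M_z^{-1}$ is bounded and holomorphic on $\Omega$. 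Setting $U_z := M_z^{-1}\iota \in BL(X, C(\partial\Omega, X))$, we obtain $\mathbb{T}-zI = M_z(I_C - U_z c)$. Cauchy's integral formula, applied to the $BL(X)$-valued function $I - T(\cdot)$ (holomorphic in $\Omega$, continuous on $\bar\Omega$), then yields the key identity
$$cU_z(x) = \frac{1}{2\pi i}\int_{\partial\Omega}\frac{(I-T(w))x}{w-z}\,dw = x - T(z)x,$$
so $I_X - cU_z = T(z)$.

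Next I would invoke the classical Schur-complement factorization for block operators. The block operator on $C(\partial\Omega, X) \oplus X$ has two triangular factorizations,
$$\begin{pmatrix} I_C & 0 \\ c & I_X \end{pmatrix}\begin{pmatrix} I_C & U_z \\ 0 & I_X - cU_z \end{pmatrix} = \begin{pmatrix} I_C & U_z \\ c & I_X \end{pmatrix} = \begin{pmatrix} I_C & U_z \\ 0 & I_X \end{pmatrix}\begin{pmatrix} I_C - U_z c & 0 \\ c & I_X \end{pmatrix}.$$
Multiplying out by the inverses of the ``unipotent'' triangular factors (all invertible, all depending holomorphically on $z$) extracts the equivalence $(I_C - U_z c) \oplus I_X \sim_\Omega I_C \oplus T(z)$. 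Left-multiplying by the invertible holomorphic $\mathrm{diag}(M_z, I_X)$ and then using that $M_z$ is itself invertible on $\Omega$, we obtain
$$(\mathbb{T}-zI)\oplus I_X \sim_\Omega T(z) \oplus I_{C(\partial\Omega, X)}.$$

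To match Definition~\ref{linearization} exactly, I would fix a $z$-independent closed complement of $\iota(X)$ in $C(\partial\Omega, X)$: for any $z_0 \in \Omega$, the bounded projection $\pi(f) := \frac{1}{2\pi i}\int_{\partial\Omega}f(w)/(w-z_0)\,dw$ satisfies $\pi\iota = I_X$, so $Z := \ker\pi$ is a closed complement and $C(\partial\Omega, X) \cong X \oplus Z$ via the isomorphism $f \mapsto (\pi(f), f - \iota\pi(f))$. Regrouping summands through this fixed isomorphism absorbs the trailing $I_X$ and produces holomorphic invertible operator-valued functions $E: \Omega \to BL(X \oplus Z, C(\partial\Omega, X))$ and $F: \Omega \to BL(C(\partial\Omega, X), X \oplus Z)$ with $F(z)(\mathbb{T}-zI)E(z) = T(z)\oplus I_Z$. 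The principal obstacle I anticipate is the bookkeeping in the last two steps: tracking the various block factors and the complement splitting so as to produce $E(z)$ and $F(z)$ that are simultaneously invertible \emph{and} jointly holomorphic on all of $\Omega$. Holomorphy of each individual ingredient is immediate from the rational dependence of $U_z$ and $M_z^{-1}$ on $z \in \Omega$, but assembling them cleanly into a single linearization statement is where most of the work lies.
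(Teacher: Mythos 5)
The paper itself does not prove Theorem~\ref{lin}; it is quoted from \cite{gkl,ggk} and the text after the theorem only recalls the choice of $Z$.  So there is nothing to match against, and your proposal has to be judged on its own merits.

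Your opening is fine: the norm estimate for $\mathbb{T}$, the factorization $\mathbb{T}-zI=M_z(I_C-U_zc)$, the Cauchy-formula identity $cU_z=I_X-T(z)$, and the Schur-complement identity that yields $(I_C-U_zc)\oplus I_X\sim_\Omega I_C\oplus T(z)$ are all correct and are indeed the standard ingredients.  The genuine gap is the last step, which you yourself flag as ``where most of the work lies.''  What you have at that point is
$$ (\mathbb{T}-zI)\oplus I_X\;\sim_\Omega\;T(z)\oplus I_{C(\partial\Omega,X)}\;\cong\;T(z)\oplus I_X\oplus I_Z, $$
and to reach $\mathbb{T}-zI\sim_\Omega T(z)\oplus I_Z$ you would have to \emph{cancel} an $I_X$ summand from both sides of an equivalence of operator-valued functions.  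That is not ``bookkeeping'': in general Banach spaces, stable equivalence ($A\oplus I\sim B\oplus I$) does not imply equivalence ($A\sim B$), so the deduction is simply not available as stated.

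The gap can be closed, but only by using two structural facts that your proposal never records, both of which are special to the GKL set-up.  First, $c\iota=0$: for constant $f\equiv x$, $c(f)=\frac{1}{2\pi i}\int_{\partial\Omega}(I-T(u))x\,du=0$ by Cauchy's theorem, since $I-T$ is holomorphic on $\Omega$.  Second, $U_z(X)\subset Z$: with $\pi(f)=\frac{1}{2\pi i}\int_{\partial\Omega}f(w)(w-z_0)^{-1}dw$ and $Z=\ker\pi$, a residue computation gives $\pi\bigl(U_z x\bigr)=\frac{1}{2\pi i}\int_{\partial\Omega}\frac{x}{(w-z)(w-z_0)}dw=0$ for every $z\in\Omega$, because the two residues cancel (and the $z=z_0$ case gives a second-order pole with zero residue).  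These two facts mean $I_C-U_zc$ is \emph{already} block diagonal with respect to the fixed splitting $C(\partial\Omega,X)=\iota(X)\oplus Z$, namely $I_C-U_zc=I_X\oplus\bigl(I_Z-(U_zc)|_Z\bigr)$.  Now apply your Schur-complement identity inside $X\oplus Z$ with $a:=c|_Z:Z\to X$ and $b:=U_z:X\to Z$ (so $ab=cU_z=I_X-T(z)$ and $ba=(U_zc)|_Z$): this gives $I_X\oplus(I_Z-ba)\sim_\Omega (I_X-ab)\oplus I_Z=T(z)\oplus I_Z$ with explicit unipotent holomorphic factors, hence $\mathbb{T}-zI=M_z\bigl(I_X\oplus(I_Z-ba)\bigr)\sim_\Omega T(z)\oplus I_Z$.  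There is no extra $I_X$ to cancel, because the $I_X$ in the fixed decomposition of $C(\partial\Omega,X)$ \emph{is} the copy the Schur complement needs.  So your overall strategy is right, but the passage through the enlarged space $C(\partial\Omega,X)\oplus X$ is a detour that creates an unremovable obstruction; the identities $c\iota=0$ and $U_z(X)\subset Z$ are the missing ideas that let the argument run entirely inside $C(\partial\Omega,X)$.
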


\von The Banach space $Z$ in Theorem~\ref{lin} is constructed as follows.  Let  $ \lam_0 \in \Omega$ be arbitrary but fixed. Since $ \Omega$ is open,  $ \lam_0 \notin \partial \Omega.$  Define \be \label{z}  Z := \{f\in{C(\partial \Omega, X)} : \; \int_{\partial \Omega} \frac{1}{z-\lam_0}f(z)dz=0 \}.\ee Then $ T(z) \oplus I_Z \sim_{\Omega} \mathbb{T}-z I$ on $\Omega$; see~\cite{alamali, gkl,ggk}. 
The next result describes the bijective correspondence between  eigenvectors of $T(z)$ and  $ \mathbb{T}.$ This correspondence allows us to recover eigenvectors of $T(z)$ from those of $\mathbb{T}.$

\vone
\begin{proposition}\cite{alamali}\label{reco} Let $ \mu \in  \sig(T)$ be an eigenvalue. Let $ \mathbb{T}$ be be the GKL-linearization of $T(z)$.
	Then the linear maps $ \mathcal{E} : N(T(\mu)) \longrightarrow N(\mathbb{T} -\mu I), x \longmapsto \mathcal{E}x,$ and $ \mathcal{F} : N(\mathbb{T}- \mu I) \longrightarrow N(T(\mu))$ given by
	$$ (\mathcal{E}x)(w) := \frac{x}{ \mu - w}, \ w \in \partial \Omega, \mbox{ and } \mathcal{F}f := \frac{1}{2\pi i} \int_{\partial \Omega} \frac{(\mu - w)}{w-\lam_0} f(w)dw$$ are isomorphisms, where $\lam_0 \in \Omega$ is fixed as in (\ref{z}).
\end{proposition}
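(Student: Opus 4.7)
The plan is to verify directly that $\mathcal{E}$ and $\mathcal{F}$ are well-defined, linear, and mutually inverse, which forces both to be isomorphisms. Since $\mu,\lambda_0\in\Omega$ and hence off $\partial\Omega$, the functions $1/(\mu-w)$ and $1/(w-\lambda_0)$ are continuous on $\partial\Omega$, so $\mathcal{E}x\in C(\partial\Omega,X)$ and $\mathcal{F}f\in X$ without issue. All core calculations reduce to Cauchy's integral formula applied to $T(w)$, which is holomorphic on a neighbourhood of $\overline{\Omega}$ by the hypothesis of Theorem~\ref{lin}; I shall repeatedly use
\[ \frac{1}{2\pi i}\int_{\partial\Omega}\frac{c\,dw}{w-\mu} = c, \qquad \frac{1}{2\pi i}\int_{\partial\Omega}\frac{T(w)c\,dw}{w-\mu} = T(\mu)c \qquad (c\in X). \]

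First I would check that $\mathcal{E}$ maps $N(T(\mu))$ into $N(\mathbb{T}-\mu I)$. For $x\in N(T(\mu))$, substituting $(\mathcal{E}x)(w)=x/(\mu-w)$ into the GKL formula and applying the two Cauchy identities gives
\begin{align*}
((\mathbb{T}-\mu I)\mathcal{E}x)(z) &= (z-\mu)\frac{x}{\mu-z} - \frac{1}{2\pi i}\int_{\partial\Omega}\frac{(I-T(w))x}{\mu-w}\,dw \\
&= -x - \bigl(-x + T(\mu)x\bigr) = -T(\mu)x = 0.
\end{align*}

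Next I would characterize $N(\mathbb{T}-\mu I)$ itself. For any $f\in N(\mathbb{T}-\mu I)$, the equation $\mathbb{T}f=\mu f$ rearranges to
\[ (z-\mu)f(z) = \frac{1}{2\pi i}\int_{\partial\Omega}(I-T(w))f(w)\,dw =: c, \]
a constant in $z$, so $f(w)=c/(w-\mu)$ on $\partial\Omega$. Plugging this formula back into the expression for $c$ and invoking the two Cauchy identities above yields $c = c-T(\mu)c$, i.e.\ $c\in N(T(\mu))$. A direct residue computation then produces
\[ \mathcal{F}f = \frac{1}{2\pi i}\int_{\partial\Omega}\frac{(\mu-w)}{w-\lambda_0}\cdot\frac{c}{w-\mu}\,dw = -\frac{1}{2\pi i}\int_{\partial\Omega}\frac{c}{w-\lambda_0}\,dw = -c\in N(T(\mu)). \]

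Finally, I would verify that the two compositions are identities. For $x\in N(T(\mu))$ the constant associated with $\mathcal{E}x$ is $c=(z-\mu)x/(\mu-z)=-x$, so $\mathcal{F}\mathcal{E}x=-c=x$. Conversely, given $f(w)=c/(w-\mu)\in N(\mathbb{T}-\mu I)$, one has $(\mathcal{E}\mathcal{F}f)(w) = (\mathcal{E}(-c))(w) = -c/(\mu-w) = c/(w-\mu) = f(w)$. Hence $\mathcal{F}\mathcal{E}=\mathrm{id}_{N(T(\mu))}$ and $\mathcal{E}\mathcal{F}=\mathrm{id}_{N(\mathbb{T}-\mu I)}$, so both maps are isomorphisms. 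The main subtlety lies in the characterization step: one must deduce that every element of $N(\mathbb{T}-\mu I)$ is automatically of the rational form $c/(w-\mu)$. Once that structure is extracted from the fact that the defining integral is $z$-independent, the remaining assertions are essentially residue calculus.
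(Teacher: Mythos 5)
Your proof is correct. The paper states Proposition~\ref{reco} with a citation to \cite{alamali} and does not supply its own argument, so there is no in-paper proof to compare against; your direct verification — that every $f\in N(\mathbb{T}-\mu I)$ must be of the form $c/(w-\mu)$ with $c\in N(T(\mu))$ (by isolating the $z$-independent integral and then applying Cauchy's formula to $T(w)c$), combined with the residue computations showing $\mathcal{F}\mathcal{E}=\mathrm{id}$ and $\mathcal{E}\mathcal{F}=\mathrm{id}$ — is exactly the natural route and all the integrals are evaluated correctly.
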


\vone
Note that $\bar \Omega$ denotes the topological closure of $\Omega.$ Consider the function space 
  $$ \mathbb{H}(\Omega, BL(X)) := \{ T : \Omega \longrightarrow BL(X) \ | \ T \text{ is holomorphic on } \Omega  \ \text{ and continuous on } \bar \Omega\}.$$
For $ V \in \mathbb{H}(\Omega, BL(X)),$ define $ \|V\|_{\partial\Omega} := \frac{1}{2 \pi } \int_{\partial\Omega} \|V(z)\| \, |dz|.$
Since $ z \longmapsto \|V(z)\|$ is subharmonic on $\Omega,$ it follows that $ \|V\|_{\partial\Omega} = 0 \Longrightarrow V(z) = 0$ for all $ z \in \Omega \cup\partial\Omega.$ In fact, $ \|\cdot\|_{\partial \Omega}$ defines a norm on $ \mathbb{H}(\Omega, BL(X)).$  Next, define $ \mathbb{V} : C(\partial\Omega, X) \longrightarrow C(\partial\Omega, X)$ by  $$ \mathbb{V}f := \frac{1}{2 \pi i} \int_{\Gamma} \mathcal{I} V(z)f(z) dz,$$ where $\mathcal{I}$ is the imbedding of $X$ into $C(\partial\Omega, X).$  Then  $ \|\mathbb{V}f\|_{\infty} \leq \|V\|_{\partial\Omega} \|f\|_{\infty}.$  

\vone 

 Consider $ W(t, z) := T(z) + t V(z)$ for $ t \in \C.$ Then  the GKL-linearization $\mathbb{T}(t)$ of $W(t, z)$  can be written as $ \mathbb{T}(t) = \mathbb{T}+ t \mathbb{V}$ for $ t \in \C,$ where $\mathbb{T}$ is the GKL-linearization of $T(z).$
Let $ \mu$ be a discrete eigenvalue of $T(z)$ of algebraic multiplicity $\ell.$ Let $ \Gamma \subset \rho(T)$ be such that $\sig(T)\cap \mathrm{Int}(\Gamma) = \{\mu\}.$ Let $$ \partial_{\Gamma} :=\{ t \in \C : \max_{z\in \Gamma} r_{\sig}\left( V(z) T(z)^{-1}\right) < 1\}.$$   Then it can be shown \cite[Theorem~5.2]{alamali} that  $ \Gamma \subset \rho(\mathbb{T}(t))$ for all $ t \in \partial_{\Gamma}$  and the spectral projection
$$ \mathbb{P}(t):= \frac{1}{2\pi i}\int_{\Gamma}(zI- \mathbb{T}(t))^{-1}dz $$ is holomorphic on $\partial_{\Gamma}.$ Hence  $\mathrm{rank}(\mathbb{P}(t)) = \mathrm{rank}(\mathbb{P}) =\ell$ for all $ t \in \partial_{\Gamma},$ where $\mathbb{P}  $ is the spectral projection associated with $\mathbb{T} $ and $\mu.$ By Theorem~\ref{eigcount}, $W(t,z)$ has $\ell$ discrete eigenvalues (counting multiplicity)  $ \mu_1(t), \ldots, \mu_{\ell}(t)$ inside the curve $\Gamma.$ Set $$ \mu_{\mathrm{av}}(t) := (\mu_1(t) + \cdots + \mu_{\ell}(t))/{\ell}.$$ Then by (\ref{tracebd})  (see also \cite[p.~405]{kato}), we have  $ \mu_{\mathrm{av}}(t) = \mu + \frac{1}{\ell} \mathrm{Tr}( \mathbb{VP}) t + \mathcal{O}(|t|^2)$, where $\mathbb{P}$ is the spectral projection associated with $\mathbb{T}$ and $ \lam.$  Hence we have the first order bound
$$ |\mu_{\mathrm{av}}(t) - \mu | \leq  \frac{1}{\ell} |\mathrm{Tr}(\mathbb{VP}) | \,|t| + \mathcal{O}(|t|^2).$$

The one parameter family of operators $ \mathbb{T}(t) = \mathbb{T}+ t \mathbb{V}, t \in \C,$ which is a linearization of the one parameter operator-valued function $W(t, z) := T(z) +t V(z),\; t \in \C,$ can now be utilized to derive various perturbation  bounds for discrete eigenvalues of $W(t, z).$ Indeed, we have the following result, see~\cite{alamali}.

\vone 
\begin{theorem} Let $ \mu $ be a discrete eigenvalue of $T(z)$ of  algebraic multiplicity $\ell$ and ascent $\nu$  such that $\sig(T)\cap \mathrm{Int}(\Gamma) = \{\mu\}.$ 	
	Let $\mu_1(t), \ldots, \mu_{\ell}(t)$ be the $\ell$ discrete eigenvalues (counting multiplicity) of $W(t,z)$ inside the curve $\Gamma$ for $ t \in \partial_\Gamma.$ Set
	$$\mu_{\mathrm{av}}(t) := (\mu_1(t) + \cdots + \mu_{\ell}(t))/{\ell}$$ for $ t \in \partial_{\Gamma}.$ Then the following hold.
	
	(a) There is a $\delta >0$  and a constant $\alpha$ (independent of $t$ and $V$) such that $$ \|t V\|_{\partial\Omega} < \delta\Longrightarrow | \mu_j(t) - \mu|^{\nu} \leq  \alpha \|t V\|_{\partial\Omega} \ \text{ for } \ j = 1,2, \ldots, \ell.$$
	
	(b) There is a $\delta >0$  and a constant $\beta$ (independent of $t$ and $V$) such that $$ \|t V\|_{\partial\Omega} < \delta\Longrightarrow | \mu_{\mathrm{av}}(t) - \mu| \leq  \beta \|t V\|_{\partial\Omega}.$$
	
\end{theorem}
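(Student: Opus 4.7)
The plan is to reduce everything to the linear case that has already been treated in Theorem~\ref{mainbd} by passing to the GKL-linearization. Concretely, let $\mathbb{T} \in BL(C(\partial\Omega, X))$ be the GKL-linearization of $T(z)$ as constructed in Theorem~\ref{lin}, and let $\mathbb{V}$ be the associated bounded operator defined by $\mathbb{V}f := \frac{1}{2\pi i}\int_{\partial\Omega} \mathcal{I} V(z) f(z)\, dz$. As noted just before the statement, the GKL-linearization of $W(t,z) = T(z) + tV(z)$ is precisely the one parameter family $\mathbb{T}(t) = \mathbb{T} + t\mathbb{V}$ on $C(\partial\Omega, X)$, so we are exactly in the setting of Theorem~\ref{mainbd} applied to the operator $\mathbb{T}$ with perturbation $\mathbb{V}$.

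First I would collect the translation of spectral data under linearization. By Theorem~\ref{thm1}, the discrete eigenvalue $\mu \in \sig_d(T)$ is a discrete eigenvalue of $\mathbb{T}$ with the same algebraic multiplicity $\ell$ and the same ascent $\nu$, and $\sig(\mathbb{T}) \cap \mathrm{Int}(\Gamma) = \{\mu\}$ with $\Gamma \subset \rho(\mathbb{T})$. By Theorem~\ref{eigcount}, for $t \in \partial_\Gamma$ the number of discrete eigenvalues of $W(t,z)$ inside $\Gamma$ equals the number (counting multiplicity) of discrete eigenvalues of $\mathbb{T}(t)$ inside $\Gamma$, which is $\ell$. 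Moreover, since $\mathbb{T}(t)$ linearizes $W(t,z)$ on a neighbourhood of $\mathrm{Int}(\Gamma)$, the eigenvalues match set-wise and with multiplicities, so $\mu_1(t), \ldots, \mu_\ell(t)$ are simultaneously the discrete eigenvalues of $W(t,z)$ and of $\mathbb{T}(t)$ inside $\Gamma$, and the mean $\mu_{\mathrm{av}}(t)$ coincides with the corresponding mean for $\mathbb{T}(t)$.

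Next I would apply Theorem~\ref{mainbd} directly to $\mathbb{T}$ with the perturbation $t\mathbb{V}$. This yields a $\delta' > 0$ and constants $\alpha', \beta'$ (depending on $\mathbb{T}$, $\Gamma$, $\mu$, $\ell$, $\nu$, but independent of $t$ and $\mathbb{V}$) such that $\|t\mathbb{V}\| < \delta'$ implies
\begin{equation*}
|\mu_j(t) - \mu|^\nu \leq \alpha' \|t\mathbb{V}\| \quad (j = 1,\ldots,\ell) \quad\text{and}\quad |\mu_{\mathrm{av}}(t) - \mu| \leq \beta' \|t\mathbb{V}\|.
\end{equation*}
Finally, the proof is closed by the bound $\|\mathbb{V}\| \leq \|V\|_{\partial\Omega}$ mentioned just before the statement: for $f \in C(\partial\Omega, X)$ with $\|f\|_\infty \leq 1$, we have $\|\mathbb{V}f\|_\infty \leq \frac{1}{2\pi}\int_{\partial\Omega}\|V(z)\|\,|dz| = \|V\|_{\partial\Omega}$. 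Therefore $\|t\mathbb{V}\| \leq \|tV\|_{\partial\Omega}$, and taking $\delta := \delta'$, $\alpha := \alpha'$, $\beta := \beta'$ produces the required bounds.

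The only subtle point, and the main obstacle, is making sure that the constants $\alpha, \beta, \delta$ are truly independent of $V$. This is because in Theorem~\ref{mainbd} these constants depend only on $\mathbb{T}$, the curve $\Gamma$, and the eigenvalue $\mu$ (through, e.g., the resolvent $(\mathbb{T} - zI)^{-1}$ along $\Gamma$ and the Laurent expansion around $\mu$), and \emph{not} on the perturbing operator. Since $\mathbb{T}$ and $\Gamma$ are fixed before $V$ enters, this independence is preserved; the passage from $\|t\mathbb{V}\|$ to $\|tV\|_{\partial\Omega}$ is one-sided, i.e., uses only $\|t\mathbb{V}\| \leq \|tV\|_{\partial\Omega}$, which preserves the inequality and the smallness condition. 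Everything else is bookkeeping through Theorems~\ref{lin}, \ref{thm1}, \ref{eigcount}, and \ref{mainbd}.
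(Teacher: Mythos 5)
Your proposal is correct and matches the paper's own proof essentially verbatim: both pass to the GKL-linearization $\mathbb{T}(t) = \mathbb{T} + t\mathbb{V}$, invoke Theorem~\ref{mainbd} to get bounds in $\|t\mathbb{V}\|$, and conclude via $\|\mathbb{V}\| \leq \|V\|_{\partial\Omega}$. You supply a bit more justification (via Theorems~\ref{thm1} and~\ref{eigcount}) for the identification of eigenvalues and multiplicities of $W(t,z)$ with those of $\mathbb{T}(t)$ inside $\Gamma$, which the paper takes for granted, but the argument is the same.
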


\begin{proof} Consider the operator $ \mathbb{T}(t) := \mathbb{T}+ t \mathbb{V}$ for $ t \in \C. $ Then by Theorem~\ref{mainbd}  there is a $\delta >0$ and a constant $\alpha$ independent of $t$ and $\mathbb{V}$ such that $$\|t \mathbb{V}\|_{\infty} < \delta\Longrightarrow | \mu_j(t) - \mu|^{\nu} \leq  \alpha \|t \mathbb{V}\|_{\infty} \ \text{ for } \ j = 1: \ell.$$
	Similarly, there is a $\delta >0$ and a constant $\beta$ independent of $t$ and $\mathbb{V}$ such that $$\|t \mathbb{V}\|_{\infty} < \delta\Longrightarrow | \mu_{\mathrm{av}}(t) - \mu| \leq  \beta \|t \mathbb{V}\|_{\infty}.$$ Now the desired results follow from the fact that $ \| \mathbb{V}\|_{\infty} \leq \| V\|_{\partial\Omega}.$ \end{proof}

\vone


\begin{thebibliography}{10}

\bibitem{bibhas1}  {\sc B. Adhikari and R. Alam,} {\em   Structured backward errors and pseudospectra of structured matrix pencils,}  SIAM J. Matrix Anal. Appl., 31(2009), pp. 331-359.
\bibitem{bibhas3}   {\sc B. Adhikari and R. Alam,}  {\em  Structured mapping problems for linearly structured matrices,} Linear Algebra  Appl., 444(2014), pp.132-145
\bibitem{bibhas2} {\sc B. Adhikari, R. Alam and D. Kressner,} {\em  Structured eigenvalue condition numbers and linearizations for matrix polynomials,} Linear Algebra  Appl., 435(2011), pp. 2193-2221.

\bibitem{mario}{\sc M. Ahues, A. Largillier and B. V. Limaye}, {\em Spectral Computations for Bounded Operators}, Chapman and Hall/CRC, 2001.

\bibitem{tofna} {\sc R.~Alam}, {\em Wilkinson's problem revisited},{ The Journal of Analysis., 4(2006), pp.176-205.}
\bibitem{alamela}{\sc R. Alam}, {\em On the sensitivity analysis of eigenvalues}, { Eletron. J. Linear Algebra, 29(2016), pp.223-236.}

\bibitem{alamjma}	{\sc R. Alam,} {\em On spectral approximation of linear operators,}  J. Math. Anal. Appl., 226(1998), pp.229-244.
\bibitem{alamsafik}{\sc  R. Alam and S. Ahmad}, {\em  Sensitivity analysis of nonlinear eigenproblems}, SIAM J. Matrix Anal. Appl., 40(2019). pp.672-695.


\bibitem{alamali} {\sc R. Alam and J. Ali}, {\em Perturbations of discrete spectra of holomorphic operator-valued functions}, { The Journal of Analysis,  29(2021), pp.551-569.}
\bibitem{rafinami1} {\sc R. Alam and N. Behera,} {\em   Linearizations for rational matrix functions and Rosenbrock system polynomials,} SIAM J. Matrix Anal. Appl., 37(2016), pp. 354-380.
\bibitem{rafinami2} {\sc R. Alam and N. Behera,} {\em Generalized Fiedler pencils for rational matrix functions,}  SIAM J. Matrix Anal. Appl., 39(2018), pp. 587-610.
\bibitem {alambora2}{\sc R.~Alam and S.~Bora,} {\em  On sensitivity of eigenvalues and eigendecompositions of matrices,} { Linear Algebra Appl., 396(2005), pp.273-301.}

\bibitem{alambora} {\sc R.~Alam and S.~Bora,} {\em On stable eigendecomposition of matrices,} {SIAM J. Matrix Anal. Appl. 26(2005), no.3, 830-848.}
\bibitem{alambora3} {\sc R.~Alam and S.~Bora,} {\em Effect of linear perturbation on spectra of matrices}, {Linear Algebra Appl., 386(2003), pp.329-342.} 
\bibitem{alambora4} {\sc R.~Alam and S.~Bora,} {\em Stability of eigenvalues and eigendecompositions under linear perturbation}, {Linear Algebra Appl., 364(2003), pp.189-211.} 

\bibitem{alamvolker} {\sc R. Alam, S. Bora, M. Karow, V. Mehrmann and J. Moro,} {\em   Perturbation theory for Hamiltonian matrices and the distance to bounded realness,} SIAM Journal Matrix Anal. Appl., 32(2011), pp.484-514.
\bibitem{alammc} 	{\sc R. Alam, R. P. Kulkarni and B. V. Limaye,} {\em Accelerated spectral approximation,} Math. Comp., 67(1998), No.224, pp.1401-1422.

\bibitem{dopico} {\sc A. Amparan, F. M. Dopico, S. Marcaida, and I. Zaballa,} {\em  Strong linearizations of rational matrices,}  SIAM J. Matrix Anal. Appl., 39 (2018), pp. 1670-1700.
	


\bibitem{AV}{\sc E. N. Antoniou and  S. Vologiannidis,}  {\em A new family of companion forms of polynomial matrices},
{Electron. J. Linear Algebra, 11 (2004), pp.78-87.}
	
	
\bibitem{bhatiabook}{\sc R.~Bhatia}, {\em Matrix Analysis}, {Springer-Verlag,	1997.}

\bibitem{bhatiabd} {\sc R. Bhatia, L. Elsner and G. Krause,} {\em Bounds for the variation of the roots of a polynomial and the eigenvalues of a matrix,} {Linear Algebra Appl., 142(1990), pp. 195-209}.
	
%
%
%
	
	

	
\bibitem{baum} {\sc H.~Baumgartel}, {\em Analytic Perturbation Theory for Matrices and Operators}, {Birkhauser Verlag, 1985.}


\bibitem{nlevp} {\sc T. Betcke, N. J. Higham, V. Mehrmann, C. Schr{\"o}der,  and F. Tisseur,}
{\em NLEVP: A collection of nonlinear eigenvalue problems,} {ACM Trans. Math. Software,39(2013), pp.7-28.}


	
\bibitem{chatelin} {\sc F. Chatelin}, { \em Spectral Approximation of Linear Operators}, SIAM, Philadelphia, USA, 2011.  
	
\bibitem{chu} {\sc K.-W. E. Chu,} { \em On multiple eigenvalues of matrices depending on several parameters,} { SIAM J. Numer. Anal.,} { 27(1990), pp. 1368-1385.}
	
\bibitem{ranjan1}  {\sc R. K. Das and R. Alam,} {\em  Affine spaces of strong linearizations for rational matrices and the recovery of eigenvectors and minimal bases,} Linear Algebra Appl., 569(2019), pp.335-368.	
	
\bibitem{Dem} {\sc J. W. Demmel,}{\it Computing stable eigendecompositions of matrices,} {Linear Algebra  Appl., 79(1986), pp.163-193.}




\bibitem{Dem83} {\sc J Demmel,}{\it  The condition number of equivalence transformations that block diagonalize matrix pencils,} { SIAM J Numer. Anal., 20(1983), pp.599-610.}


\bibitem{blkmin} {\sc F.M. Dopico, P.W. Lawrence, J. Perez, and P. Van Dooren,} {\em  Block Kronecker linearizations of matrix polynomials and their backward errors,} {  Numer. Math., 140(2018), pp.373-426.}

\bibitem{tdm} {\sc F. De Ter{\'a}n, F. M. Dopico, and D. S. Mackey}, {\em Fiedler companion linearizations and the recovery of minimal indices,} { SIAM J.  Matrix Anal.  Appl., 31 (2010), pp.2181-2204.}

	


\bibitem{large} {\sc  L. Grammont and A. Largillier,} {\em  The $\epsilon$-spectrum and stability radius,} { J. Comput. Appl. Math., 147(2002), pp.453-469.}

\bibitem{ggk} {\sc I.  Gohberg,  S. Goldberg,  M.  A. Kaashoek}, {\em Classes of Linear Operators Vol. I}, { Springer, 1990.}
	
\bibitem{gkl} {\sc I. C. Gohberg, M.A. Kaashoek and D.C. Lay}, {\em Equivalence, linearization, and decomposition of holomorphic operator functions},{ J. Funct. Anal.,  28(1978), pp.102-144.}
%
\bibitem{glr} {\sc  I. Gohberg, P. Lancaster, and L. Rodman,} {\em  Matrix Polynomials,}  Academic Press Inc., New York, 1982.
	
\bibitem{gs} {\sc I. C. Gohberg and E. I. Sigal}, {\em An operator generalization of the logarithmic residue theorem and the theorem of Rouche}, { Math. Sbornik, 13(1971), pp. 603-625.}
\bibitem{tiss1} {\sc S. Guettel and F. Tisseur,}{ \em The nonlinear eigenvalue problem,} {Acta Numerica, 26(2017), pp.1-94.}


\bibitem{hornbook}{\sc R.~A. Horn and C.~R. Johnson}, {\em Matrix Analysis},	Cambridge University press, New York, 1985.


\bibitem{kahan} {\sc W. Kahan, B. N. Parlett and E Jiang}, {\em Residual bounds on approximate eigensystems of nonnormal matrices}, {SIAM J Numer. Anal., 19(1982), pp.470-484.}

\bibitem{karow}{\sc M. Karow and D. Kressner,} {\em  On a perturbation bound for invariant subspaces of matrices,} {SIAM. J. Matrix Anal. Appl., 35(2014), pp.599-618.}
	
\bibitem{kato}{\sc T.~Kato}, {\em Perturbation Theory for Linear Operators}, 	{Springer-Verlag, 1980.}

\bibitem{limbook1}	 {\sc B. V. Limaye.} {\em Functional Analysis}, {New Age International Private Limited, New Delhi, 2014.}
\bibitem{limbook2}  {\sc B. V. Limaye.} {\em Spectral Perturbation and Approximation with Numerical Experiments}, {Centre for Mathematical Analysis, Australian National University,  Vol. 13, 1986.}
	
	
	

\bibitem{mmmm} {\sc D. S. Mackey, N. Mackey, C. Mehl, and V. Mehrmann,} {\em  Vector spaces of linearizations for matrix polynomials,} SIAM J. Matrix Anal. Appl., 28(2006), pp.971-1004.

	
\bibitem{mehl1}{\sc C. Mehl,  V. Mehrmann, A. C.M. Ran, and L. Rodman}, {\em   Eigenvalue perturbation theory of classes of structured matrices under generic structured rank one perturbations}, Linear Algebra Appl., 435(2011), pp.687-716.


\bibitem{mehl2}{\sc C. Mehl,  V. Mehrmann, A. C.M. Ran, and L. Rodman},	{\em Perturbation theory of selfadjoint matrices and sign characteristics under generic structured rank one perturbations}, Linear Algebra Appl. 436(2012), pp.4027-4042.
	
\bibitem{mehl3} {\sc C. Mehl,  V. Mehrmann, A. C.M. Ran, and L. Rodman} {\em Eigenvalue perturbation theory of symplectic, orthogonal, and unitary matrices under generic structured rank one perturbations,} BIT Numer. Math.,  54(2014), pp.219-255.


\bibitem{volker1} {\sc V. Mehrmann, C.Schr{\"o}der}, {\em Nonlinear eigenvalue and frequency response problems in industrial practice}, {J. Math. Industry (2011), 1:7,  DOI 10.1186/2190-5983-1-7}


\bibitem{volker2}{\sc V. Mehrmann and H. Voss,} {\em Nonlinear eigenvalue problems: a challenge for modern
	eigenvalue methods,}  {GAMM Mitt. Ges. Angew. Math. Mech.,  27(2004), pp.121-152.}


\bibitem{MM} {\sc R. Mennichen and M. M{\"o}ller}, {\em Non-Self-Adjoint Boundry Eigenvalue Problems,} { North-Holland, 2003.}


\bibitem{WN} {\sc W. Michiels and S.-I. Niculescu,}{ \em  Stability and Stabilization of Time-Delay Systems,}{  SIAM, Philadelphia, PA, 2007.}

	
\bibitem{osborn}  {\sc J. E. Osborn}, {\em Spectral approximation of compact operator}, {Maths.  Comp.,  29(1975), pp.712-725.}

\bibitem{rump} {\sc S. M. Rump}, {\em  Eigenvalues, pseudospectrum and structured perturbations}, Linear Algebra Appl., 413 (2006) 567-593.	
	
\bibitem{smith} {\sc R.~A.~Smith,} {\em The condition numbers of the matrix eigenvalue problem,} {Numer. Math., 10(1967), pp.232-240.}


\bibitem{moro1} {\sc F. Sosa, J. Moro and C. Mehl}, {\em First Order Structure-Preserving Perturbation Theory for Eigenvalues of Symplectic Matrices}, SIAM J. Matrix Anal., 41(2020), pp.657-690.
\bibitem{moro2}  {\sc F. Sosa, J. Moro and C. Mehl}, {\em First Order Structure-Preserving Perturbation Theory for Eigenvalues of Symplectic Matrices: Part II}, Linear Algebra Appl., 689(2024), pp.196-229.
	
\bibitem{stewart71} {\sc G.~W.~ Stewart}, {\em Error bounds for approximate invariant subspaces of closed linear operators}, {SIAM J Numer. Anal., 8(1971), pp.796-808.}
	
		
\bibitem{stewart73} {\sc G.~W.~ Stewart}, {\em Error and perturbation bounds for subspaces associated with certain eigenvalue problems}, {SIAM 	Rev., 15 (1973), pp.~727--764.}
		

	%
	
\bibitem{stewbook} {\sc G.~W.~Stewart and J. Sun,}  {\em Matrix Perturbation Theory} {Academic Press, 1990.}
	
	
\bibitem{sun1} {\sc J.G. Sun,} {\em Eigenvalues and eigenvectors of a matrix dependent on several parameters,} { J. Comput. Math.,}{ 3(1985), pp. 351-364.}
	
\bibitem{sun2} {\sc J.G. Sun,} {\em Multiple eigenvalue sensitivity analysis,}{ Linear Algebra Appl.,}{ 37/138 (1990), 	pp. 183-211.}
	
\bibitem{sun3} {\sc J.G. Sun,} {\em On condition numbers of a non-defective multiple eigenvalue,} { Numer. Math.,}{ 61(1992), pp.265-275.}
\bibitem{sun4} {\sc J.G. Sun,} {\em On the variation of the spectrum of a normal matrix,} { Linear Algebra Appl.,}{ 246(1996), pp.215-223. }
\bibitem{lntbook}{\sc L. N. Trefethen and M. Embree}, {\em Spectra and Pseudospectra: The Behavior of Nonnormal Matrices and Operators,}{ Princeton University Press, 2005}.
	

	
\bibitem{W1} {\sc J.~H.~Wilkinson}, {\em The Algebraic Eigenvalue Problem}, {Oxford University press, 1965.}
	
\bibitem{W2} {\sc J. H. Wilkinson,} {\em  Notes on matrices with a very ill-conditioned eigenproblem,} { Numer. Math., 19(1972), pp.175-178.}
	
\bibitem{W3} {\sc J. H. Wilkinson,} {\em  On neighbouring matrices with quadratic elementary divisors,} { Numer. Math., 44(1984), pp.1-21.}
	
\bibitem{W4} {\sc J. H. Wilkinson,} {\em Sensitivity of eigenvalues,} {Utilitas Mathematica,  25(1984), pp.5-76.}
	
	
\end{thebibliography}
\end{document}